\documentclass[11pt,a4paper,reqno]{amsart}
\usepackage[english]{babel}
\usepackage[applemac]{inputenc}
\usepackage[T1]{fontenc}
\usepackage{palatino}
\usepackage{amsmath}
\usepackage{amssymb}
\usepackage{amsthm}
\usepackage{amsfonts}
\usepackage{graphicx}
\usepackage{color}
\usepackage{vmargin}
\usepackage{mathtools}

\setmarginsrb{20mm}{20mm}{20mm}{20mm}{10mm}{10mm}{10mm}{10mm}

\usepackage[colorlinks = true, citecolor = black]{hyperref}
\pagestyle{headings}

\newtheorem{theorem}{Theorem}[section]
\newtheorem{obs}{Observation}[section]

\newtheorem{cor}[theorem]{Corollary}
\newtheorem{prop}[theorem]{Proposition}

\newtheorem{lem}[theorem]{Lemma}

\theoremstyle{definition}
\newtheorem{defn}{Definition}[section]

\newtheorem{remark}[theorem]{Remark}

\newtheorem*{theorem*}{Theorem}
\newtheorem{ex}{Example}[section]

\def\vint{\mathop{\mathchoice%
          {\setbox0\hbox{$\displaystyle\intop$}\kern 0.22\wd0%
           \vcenter{\hrule width 0.6\wd0}\kern -0.82\wd0}%
          {\setbox0\hbox{$\textstyle\intop$}\kern 0.2\wd0%
           \vcenter{\hrule width 0.6\wd0}\kern -0.8\wd0}%
          {\setbox0\hbox{$\scriptstyle\intop$}\kern 0.2\wd0%
           \vcenter{\hrule width 0.6\wd0}\kern -0.8\wd0}%
          {\setbox0\hbox{$\scriptscriptstyle\intop$}\kern 0.2\wd0%
           \vcenter{\hrule width 0.6\wd0}\kern -0.8\wd0}}%
          \mathopen{}\int}

\newcommand{\R}{\mathbb{R}}
\newcommand{\Rn}{{\mathbb R}^n}

\newcommand{\ep}{\varepsilon}
\newcommand{\Om}{\Omega}
\newcommand{\om}{\omega}
\newcommand{\ud}{\mathrm {d}}
\newcommand{\Hn}{\mathbb{H}^n}

\newcommand{\Hei}{{\mathbb{H}}^{1}}

\newcommand{\dist}{\operatorname{dist}}
\newcommand{\diam}{\operatorname{diam}}

\newcommand{\A}{\mathcal{A}}

\definecolor{blau}{rgb}{0.1,0.0,0.9}
\definecolor{violet}{rgb}{0.54, 0.17, 0.89}
\newcommand{\blue}{\color{blau}}

\newcommand{\kom}[1]{}
\renewcommand{\kom}[1]{{\bf \blue /#1/}}

\newcounter{komcounter}
\numberwithin{komcounter}{section}

\def\XXint#1#2#3{{\setbox0=\hbox{$#1{#2#3}{\int}$}
		\vcenter{\hbox{$#2#3$}}\kern-.5\wd0}}

\makeatletter
\@namedef{subjclassname@2020}{\textup{2020} Mathematics Subject Classification}
\makeatother

\begin{document}

\title[]{Carleson measures on domains in Heisenberg groups}
\author[T.\ Adamowicz]{Tomasz Adamowicz{\small$^1$}}
\address{The Institute of Mathematics, Polish Academy of Sciences \\ ul. \'Sniadeckich 8, 00-656 Warsaw, Poland}
\email{tadamowi@impan.pl}
\author[M. Grysz\'owka ]{Marcin Grysz\'owka{\small$^1$}}
\address{Institute of Mathematics, Polish Academy of Sciences, ul. \'Sniadeckich 8, 00-656 Warsaw, Poland\/ \and Faculty of Mathematics, Informatics and Mechanics, University of Warsaw, ul. Banacha 2, 02-097 Warsaw, Poland}
\email{mgryszowka@impan.pl}

\thanks{{\small$^1$}T.A. and M.G were supported by the National Science Center, Poland (NCN), UMO-2020/39/O/ST1/00058.}
\keywords{ADP domain, BMO space, Carleson measure, conformal mapping, Green function, subelliptic harmonic function, Heisenberg groups, NTA domain, non-tangential maximal function, Poisson kernel, square function}
%\date{\today}
\subjclass[2020]{(Primary) 35H20; (Secondary) 31B25, 42B37}

\begin{abstract} We study the Carleson measures on NTA and ADP domains in the Heisenberg groups $\Hn$ and provide two characterizations of such measures: (1) in terms of the level sets of subelliptic harmonic functions and (2) via the $1$-quasiconformal family of mappings on the Kor\'anyi--Reimann unit ball. Moreover, we establish the $L^2$-bounds for the square function $S_{\alpha}$ of a subelliptic harmonic function and the Carleson measure estimates for the BMO boundary data, both on NTA domains in $\Hn$. Finally, we prove a Fatou-type theorem on $(\ep, \delta)$-domains in $\Hn$.

Our work generalizes results by Capogna--Garofalo and Jerison--Kennig.
\end{abstract}

\maketitle

\section{Introduction}
%\komT{M.in. Carleson measures: history and importance (Carleson, Garnett, Varopoulous)}
The Carleson measures play an important role in geometric mapping theory and, especially in recent years, also in the studies of relations between geometry, analysis and the measure theory.  The importance of such measures has been growing in the last decade via the results on PDEs on rough domains, for instance, the studies of the solvability of the Dirichlet problems for elliptic equations, in analysis of the boundary behaviour of harmonic functions, also in relations to the square functions on NTA domains or uniformly rectifiable sets, see e.g.~\cite{hmm, hlm, hmmtz}. From our point of view the two main motivations come from the investigations of the uniform rectifiability and the $\ep$-approximation, see e.g.~\cite{bh, gmt, ht} and from the Hardy spaces of quasiconformal mappings, see~\cite{AK, af}. Moreover, it turns out that the Carleson measures are closely related to the geometry of functions and mappings also in the settings beyond the Euclidean one, for example on homogeneous spaces~\cite{hmmm} and on Riemannian manifolds, see~\cite{mmms, gr} and in the Heisenberg group $\Hei$, see~\cite{af}. Even though, the need for further studies of Carleson measures in the non-Euclidean setting arises, this topic in the subriemannian setting has not yet been explored as much, as in the Euclidean spaces. Therefore, one of the goals of the manuscript is to pursue this direction of investigations. In particular, we focus our attention on the Heisenberg groups $\Hn$, especially on the first Heisenberg group $\Hei$ and on the subelliptic harmonic functions on bounded non-tangentially accessible domains (NTA domains) and on bounded domains admissible for the Dirichlet problem (ADP domains). The fundamental results in the Euclidean setting that have inspired us are discussed in  Chapters I and VI of the book~\cite{ga} and in~\cite{jk}, while the main tools in the potential theory in the Heisenberg groups employed in this work are proven in~\cite{cg98, cgn}.

Let us present and briefly discuss our main results. In Section 3 we show the following characterization of the Carleson measures on ADP domain in $\Hei$ in terms of the level sets of the harmonic functions. The lemma is well known in the setting of the upper-half plane, see Lemma 5.5, Chapter I in \cite{ga}. 

\begin{theorem}\label{thm-main1}
 Let $\Om\subset \Hei$ be a smooth $ADP$ domain with $3$-regular boundary and $\mu$ be a positive measure on $\Om$. Then $\mu$ is a Carleson measure on $\Om$ if and only if there exists a constant $C=C(\alpha)$ such that for every harmonic function $u$ on $\Om$ and every $\lambda>0$ it holds that
 \begin{equation}\label{cond:l55}
 \mu(\{x\in \Om: |u(x)|>\lambda\})\leq C \sigma(\{\om\in \partial \Om: N_{\alpha}u(\om)>\lambda\}),
 \end{equation}
 where $N_{\alpha}u$ stands for the non-tangential maximal function of $u$ (see Definition~\ref{def: Nmax}) and $\sigma$ is the surface measure on $\partial \Om$, i.e. $\sigma=H^2\lfloor \partial \Om$. Moreover, if $C$ is the least constant such that ~\eqref{cond:l55} holds, then the Carleson constant of $\mu$ satisfies $\gamma_{\mu} \approx_{\alpha} C$.
\end{theorem}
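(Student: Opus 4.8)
The plan is to prove the two implications separately while tracking constants, so that the quantitative comparison $\gamma_\mu\approx_\alpha C$ emerges at the end; the scheme mirrors the upper half-plane model (Lemma 5.5, Ch.~I in \cite{ga}), with the Poisson integral of an indicator replaced by harmonic measure and with all the potential-theoretic input taken from \cite{cg98, cgn}. Throughout I write $\Gamma_\alpha(\om)$ for the non-tangential approach region at $\om$ from Definition~\ref{def: Nmax}, so $N_\alpha u(\om)=\sup_{\Gamma_\alpha(\om)}|u|$, and $\Delta(\om,\rho):=B(\om,\rho)\cap\partial\Om$; by $3$-regularity $\sigma(\Delta(\om,\rho))\approx\rho^3$ and $\partial\Om$ is doubling.

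For the implication ($\Rightarrow$) (with $C\lesssim_\alpha\gamma_\mu$) I would fix $\lambda>0$ and a harmonic, hence continuous, $u$, so that $N_\alpha u$ is lower semicontinuous and $O_\lambda:=\{\om\in\partial\Om:N_\alpha u(\om)>\lambda\}$ is open. The key is a purely metric observation: if $x\in\Om$ with $|u(x)|>\lambda$, $s:=\dist(x,\partial\Om)$ and $\hat x\in\partial\Om$ is a nearest boundary point, then for every $\om\in\partial\Om$ with $\dcc(\om,\hat x)<\tfrac{\alpha}{2}s$ the triangle inequality gives $x\in\Gamma_\alpha(\om)$, whence $N_\alpha u(\om)>\lambda$; in particular $\hat x\in O_\lambda$ and $\dist(\hat x,\partial\Om\setminus O_\lambda)\ge\tfrac{\alpha}{2}s$. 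Next I would take a Whitney decomposition $O_\lambda=\bigcup_j\Delta(\om_j,r_j)$ into surface balls with $r_j\approx\dist(\Delta(\om_j,r_j),\partial\Om\setminus O_\lambda)$, bounded overlap, and $\sum_j\sigma(\Delta(\om_j,r_j))\lesssim\sigma(O_\lambda)$. For $x\in\{|u|>\lambda\}$ choose $\Delta(\om_j,r_j)\ni\hat x$; then $r_j\gtrsim\dist(\hat x,\partial\Om\setminus O_\lambda)\gtrsim_\alpha s$, so $\dcc(x,\om_j)\le\dcc(x,\hat x)+\dcc(\hat x,\om_j)< s+r_j\lesssim_\alpha r_j$, i.e.\ $x$ lies in the Carleson box $B(\om_j,C_\alpha r_j)\cap\Om$. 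Summing the Carleson estimate and using doubling,
\[
\mu(\{|u|>\lambda\})\le\sum_j\mu\big(B(\om_j,C_\alpha r_j)\cap\Om\big)\le\gamma_\mu\sum_j\sigma\big(\Delta(\om_j,C_\alpha r_j)\big)\lesssim_\alpha\gamma_\mu\,\sigma(O_\lambda)
\]
(the degenerate case $O_\lambda=\partial\Om$ being covered by $\mu(\Om)\le\gamma_\mu\sigma(\partial\Om)$). This is \eqref{cond:l55} with $C\lesssim_\alpha\gamma_\mu$; note harmonicity enters only through continuity of $u$.

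For ($\Leftarrow$) (with $\gamma_\mu\lesssim C$) I would fix $\om_0\in\partial\Om$, $0<r<\diam\Om$, and bound $\mu(B(\om_0,r)\cap\Om)$ by $\lesssim C\,\sigma(\Delta(\om_0,r))\approx Cr^3$. For a large $K$ to be chosen, let $u(x):=\om^x(\Delta(\om_0,Kr))$, the harmonic measure of $\Delta(\om_0,Kr)$ (or, to use continuous data, the Perron solution of a continuous bump between $\chi_{\Delta(\om_0,Kr/2)}$ and $\chi_{\Delta(\om_0,Kr)}$), so $u$ is harmonic with $0\le u\le1$. Two standard facts from \cite{cg98, cgn} drive the argument: (i) for $K$ large, the boundary Hölder decay of subelliptic harmonic functions gives $\om^x(\partial\Om\setminus\Delta(\om_0,Kr))\le\tfrac12$, hence $u\ge c_0:=\tfrac12$, on $B(\om_0,r)\cap\Om$; and (ii) the Carleson-type decay $\om^x(\Delta(\om_0,Kr))\lesssim(Kr/\dcc(x,\om_0))^\beta$ for $\dcc(x,\om_0)\ge 2Kr$ and some $\beta>0$. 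By (ii), if $N_\alpha u(\om)>c_0/2$ then some $x\in\Gamma_\alpha(\om)$ has $u(x)>c_0/2$, forcing $\dcc(x,\om_0)\le C_1Kr$; since $\om_0\in\partial\Om$ also $\dist(x,\partial\Om)\le\dcc(x,\om_0)\le C_1Kr$, whence $\dcc(\om,\om_0)\le(1+\alpha)\dist(x,\partial\Om)+\dcc(x,\om_0)\le Mr$ with $M:=(2+\alpha)C_1K$. Thus $\{N_\alpha u>c_0/2\}\subseteq\Delta(\om_0,Mr)$ and $\sigma(\{N_\alpha u>c_0/2\})\lesssim(Mr)^3\lesssim r^3$. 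Applying \eqref{cond:l55} with $\lambda=c_0/2$ and using (i) to get $B(\om_0,r)\cap\Om\subseteq\{|u|>c_0/2\}$,
\[
\mu(B(\om_0,r)\cap\Om)\le\mu(\{|u|>c_0/2\})\le C\,\sigma(\{N_\alpha u>c_0/2\})\lesssim C\,r^3\approx C\,\sigma(\Delta(\om_0,r)),
\]
so $\mu$ is a Carleson measure with $\gamma_\mu\lesssim C$; together with the first part this gives $\gamma_\mu\approx_\alpha C$.

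I expect the Whitney decomposition and the bookkeeping to be routine once the metric setting is in place; the crux is the potential theory behind ($\Leftarrow$) — the non-degeneracy $\om^x(\Delta(\om_0,Kr))\gtrsim 1$ on $B(\om_0,r)\cap\Om$, the power decay of harmonic measure away from $\om_0$, and the boundary Hölder continuity of subelliptic harmonic functions — in the sub-Riemannian ADP setting. These are precisely the estimates available in \cite{cg98, cgn}; the only genuine care needed is to check that a smooth ADP domain with $3$-regular boundary supplies the corkscrew and Harnack-chain structure those estimates require, and to dispatch the (harmless) non-continuity of $\chi_{\Delta(\om_0,Kr)}$ by passing to Perron solutions.
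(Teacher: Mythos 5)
Your proposal is correct in substance, and the two halves compare differently with the paper. The sufficiency direction (Carleson $\Rightarrow$ \eqref{cond:l55}) is essentially the paper's own argument: a Whitney decomposition of the open superlevel set $\{N_\alpha u>\lambda\}\subset\partial\Om$, the projection of each point of $\{|u|>\lambda\}$ to a nearest boundary point landing well inside a Whitney surface ball, coverage of $\{|u|>\lambda\}$ by dilated Carleson boxes, and summation using bounded overlap and Ahlfors regularity -- only constants differ. The necessity direction, however, is a genuinely different route. The paper works on the ADP level: it builds $u$ from a continuous bump of height $4\lambda$ via the Poisson kernel (existence from \cite{lu}, smoothness and the outer ball condition are used here), pushes $u>\lambda$ on a dilated box by the boundary Harnack estimate of \cite{gp} applied to $4\lambda-u$, and then converts $\sigma(\{N_\alpha u>\lambda\})$ into $r^3$ through the maximal-function machinery of \cite{cgn}: $N_\alpha u\lesssim M_{\om}\phi$, the comparison $M_{\om}\phi\lesssim (M_\sigma|\phi|^\beta)^{1/\beta}$ (which rests on the mutual absolute continuity of $\om$ and $\sigma$ on smooth ADP domains), and a weak-$(1,1)$ bound. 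You instead take $u$ to be (essentially) the harmonic measure of an enlarged surface ball, get $u\geq 1/2$ on the box from the Bourgain-type non-degeneracy/boundary H\"older decay, and localize $\{N_\alpha u>1/4\}$ inside $\Delta(\om_0,Mr)$ by the power decay of harmonic measure away from the support, finishing with $3$-regularity of $\sigma$. This avoids the Poisson kernel and all maximal-function comparisons, and in fact uses only NTA-level potential theory plus $3$-regularity, so it is arguably more elementary and slightly more general (smoothness/ADP enter your argument nowhere); what the paper's route buys is a closer parallel to Garnett's Lemma 5.5 and an argument that exploits the ADP structure ($\ud\om = P\,\ud\sigma$) directly. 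Two small points you should secure when writing this up: (a) the decay $\om^x(\Delta(\om_0,Kr))\lesssim (Kr/d(x,\om_0))^{\beta}$ for $x$ away from $\om_0$ is cleanest to obtain from the Dahlberg-type estimate (Theorem~\ref{cg-thm21}, i.e.\ Theorem 1 in \cite{cg98}) together with the bound of the Green function by the fundamental solution, giving $\beta=Q-2$, rather than from boundary H\"older decay alone, which degenerates for points deep inside $\Om$; and (b) the estimates of \cite{cg98} require $Kr\lesssim r_0$, so large radii must be dispatched separately, e.g.\ by applying \eqref{cond:l55} to a constant function to see $\mu(\Om)\leq C\,\sigma(\partial\Om)$ -- a gap shared, implicitly, by the paper's proof as well.
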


While the proof of the sufficiency part of the theorem follows by applying fairly general approach based on the Whitney-type decomposition, the proof of the necessity part relies on the potential-theoretic properties of harmonic functions, including the boundary Harnack estimate in~\cite{gp} and the results proven in~\cite{cgn}. 
%Let us remark, that the original proof in~\cite{ga} based on the direct estimates for the Poisson extension of the piecewise constant function would be difficult to repeat due to 

Our next result generalizes a characterization of Carleson measures on the unit disc in the Euclidean plane, cf. Lemma 3.3  in Chapter VI.3 in \cite{ga}, see Section 4 for the detailed discussion. One of the key features that give the result in the plane is the rich family of M\"obius self-transformations of a disc, a property which is no longer true in the subriemmanian setting due to the rigidity of Carnot groups. However, recently in~\cite[Section 4.1]{af} a counterpart of M\"obius self-maps of a ball in $\Rn$ has been introduced on the Kor\'anyi--Reimann unit ball $B(0,1)\subset \Hei$ by the price of giving up that the target domain remains a ball, see the definition of maps $T:=T_{x,a,\rho}$ in~\eqref{def:Tmob} and their property~\eqref{incl:balls}. The following result characterizes the Carleson measures on $B(0,1)$ in terms of the boundary growth of $1$-quasiconformal mappings $T$.

\begin{theorem}\label{thm-main3}
 A measure $\mu$ on the Kor\'anyi--Reimann unit ball $B:=B(0,1)\subset \Hei$ is a Carleson measure if and only if
 \begin{equation}%\label{cond: thm-m3}
  \int_{B} \left( \frac{d(T_{x,a,\rho}(y),\partial T_{x,a,\rho} (B))}{d(y,\partial B)}\right)^3 \ud \mu(y)=M<\infty,
 \end{equation}
 for all $x\in B$, $a\in \mathbb{H}^1 \setminus \overline{B}$, and $\rho>0$ such that 
$\rho \lesssim \min\{d(x,\partial B), d(a,\partial B) \}$ and $\rho \approx d(a,x)$.
\end{theorem}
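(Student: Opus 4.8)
The plan is to reduce the characterization to the standard ball‑box description of Carleson measures on $B=B(0,1)$, via careful control of the distortion quantity $d(T(y),\partial T(B))/d(y,\partial B)$ along the family $T=T_{x,a,\rho}$. Since $T$ is $1$‑quasiconformal, it is in particular a $1$‑quasiconformal diffeomorphism, hence bi‑Lipschitz up to a controlled scale on suitable subregions; the ratio $d(T(y),\partial T(B))/d(y,\partial B)$ is therefore comparable to an ``infinitesimal distortion'' of $T$ at $y$, and by the property \eqref{incl:balls} the image $T(B)$ is trapped between two Korányi balls whose radii are comparable to $\rho$. The first step is to record, from \cite[Section 4.1]{af}, the quantitative consequences of \eqref{def:Tmob} and \eqref{incl:balls}: for the stated choices of $x,a,\rho$ one has $T(B)\subset\He^1$ an $(\ep,\delta)$‑ (in fact NTA) domain with constants independent of the parameters, and the boundary distance transforms in a controlled way.

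\textbf{Necessity.} Assume $\mu$ is Carleson with constant $\gamma_\mu$. Fix admissible $x,a,\rho$ and set $B'=T(B)$. For $\om\in\partial B$ and $r>0$ consider the Carleson box $B(\om,r)\cap B$; its image under $T$ is, by \eqref{incl:balls} and the $1$‑quasiconformality, comparable to a Carleson box in $B'$ of radius $\approx r$ times the local distortion, which is in turn $\approx d(T(\om'),\partial B')/d(\om',\partial B)$ for a nearby point $\om'$. The plan is to decompose $B$ into Whitney cubes $\{Q_j\}$ adapted to $\partial B$, write
\[
\int_B\left(\frac{d(T(y),\partial B')}{d(y,\partial B)}\right)^3\,\ud\mu(y)=\sum_j\int_{Q_j}\left(\frac{d(T(y),\partial B')}{d(y,\partial B)}\right)^3\,\ud\mu(y),
\]
observe that on each Whitney cube the integrand is comparable to a constant $c_j$ (again by $1$‑quasiconformality, which bounds oscillation of the distortion on a Whitney cube), and estimate $\mu(Q_j)\le\gamma_\mu\,r_j^3$ where $r_j\approx\operatorname{diam}Q_j$. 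Summing the resulting geometric series over the Whitney cubes clustered near each boundary point $\om$, and using that $\sum_{Q_j\subset B(\om,r)} c_j\,r_j^3\lesssim\sigma\big(T(B(\om,r)\cap\partial B)\big)\le\sigma(\partial B')<\infty$ — here the $3$‑regularity of $\partial B$ and the bi‑Lipschitz behaviour of $T$ on the boundary enter — gives $M\lesssim\gamma_\mu$, uniformly in the parameters.

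\textbf{Sufficiency.} Conversely, suppose the integral is finite, equal to $M$, for all admissible $x,a,\rho$. Given a boundary point $\om_0\in\partial B$ and $r\in(0,r_0)$, the idea is to \emph{choose} the parameters so that $T=T_{x,a,\rho}$ concentrates the ``mass'' near $\om_0$: pick $x$ on the inner normal to $\partial B$ at $\om_0$ with $d(x,\partial B)\approx r$, pick $a$ outside $\overline B$ with $d(a,x)\approx r$ and $d(a,\partial B)\approx r$, and $\rho\approx r$. For this choice, \eqref{incl:balls} guarantees that on the Carleson box $B(\om_0,r)\cap B$ the distortion ratio $d(T(y),\partial B')/d(y,\partial B)$ is bounded below by a universal constant $c_0>0$, while $T$ maps $B(\om_0,r)\cap B$ into a region of comparable scale, so that
\[
\mu\big(B(\om_0,r)\cap B\big)\le c_0^{-3}\int_{B(\om_0,r)\cap B}\left(\frac{d(T(y),\partial B')}{d(y,\partial B)}\right)^3\,\ud\mu(y)\le c_0^{-3}\,r^3\,\frac{M}{\text{(normalizing factor)}}.
\]
Making the last inequality precise — i.e.\ extracting the correct power $r^3$ from $M$ — uses that $\int_{B'}\!\big(d(\cdot,\partial B')/\ldots\big)^3$ over a full Carleson sub‑box scales like (radius)$^3$ by $3$‑regularity. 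This yields the Carleson condition $\mu(B(\om_0,r)\cap B)\lesssim r^3$ with $\gamma_\mu\lesssim M$.

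\textbf{Main obstacle.} The delicate point is the \emph{lower} bound on the distortion ratio $d(T(y),\partial B')/d(y,\partial B)$ on a Carleson box, since $T$ is only known to satisfy the one‑sided containment \eqref{incl:balls} and $1$‑quasiconformality does not by itself prevent the image boundary from coming close to $T(y)$. Controlling this requires using the explicit form of $T_{x,a,\rho}$ from \eqref{def:Tmob} — in particular that it is built from the Korányi inversion and left translations, whose Jacobian and boundary‑distance distortion are computable — together with the constraint $\rho\approx d(a,x)\lesssim\min\{d(x,\partial B),d(a,\partial B)\}$, which keeps the ``pole'' $a$ at a controlled distance and hence bounds the conformal factor of $T$ from above and below on the relevant box. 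Once this two‑sided distortion estimate is established uniformly in the parameters, both implications follow from the Whitney‑decomposition bookkeeping sketched above, exactly as in the Euclidean model Lemma 3.3, Chapter VI.3 of \cite{ga}.
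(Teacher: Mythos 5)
Your overall shape (choose the parameters adapted to a boundary ball in one direction; decompose $B$ and play the Carleson condition against decay of the distortion ratio in the other) is the right one, but both directions as written have genuine gaps, and the key quantitative input is deferred rather than used. The paper's proof runs entirely on the two-sided comparability $\frac{d(T(y),\partial T(B))}{d(y,\partial B)}\approx \frac{\rho}{d(y,a)^{2}}$ (property \eqref{eq:ComparabilityJ}, quoted from Adamowicz--F\"assler, Prop.\ 4.13), not on soft consequences of $1$-quasiconformality: a $1$-QC map built from the Kor\'anyi inversion is not ``bi-Lipschitz up to scale'', and the oscillation control on Whitney pieces comes from the explicit distance formula $d(T(y),T(y'))=\rho\,d(y,y')/(d(a,y)d(a,y'))$, not from quasiconformality alone. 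In the direction ``integral finite $\Rightarrow$ Carleson'', your lower bound ``ratio $\geq c_{0}$'' is too weak: it only yields $\mu(B(\omega_{0},r)\cap B)\leq c_{0}^{-3}M$ with no factor $r^{3}$, and the attempt to ``extract $r^{3}$ from $M$'' by a scaling of the integral over a sub-box has no basis, since the hypothesis only bounds the full integral by $M$ uniformly in the parameters. What actually works (and is what the paper does) is that with $d(x,\omega_{0})=r/2$, $\rho\approx d(a,x)\approx r$, the comparability gives ratio $\approx 1/r$ on all of $B(\omega_{0},r)\cap B$, whence $\mu(B(\omega_{0},r)\cap B)\approx r^{3}\int_{B(\omega_{0},r)\cap B}(\mathrm{ratio})^{3}\,\ud\mu\leq M r^{3}$.

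In the direction ``Carleson $\Rightarrow$ integral finite'', your Whitney bookkeeping fails: after replacing $\mu(Q_{j})$ by $\gamma_{\mu}r_{j}^{3}$ cube by cube, the sum $\sum_{j}c_{j}r_{j}^{3}$ diverges, because on a fixed dyadic annulus around the relevant boundary point the ratio (hence $c_{j}$) is essentially constant across all Whitney scales, while $\sum_{j}r_{j}^{3}$ over the cubes of all scales in that annulus already diverges (the shadows of Whitney cubes overlap at every scale); the claimed bound $\sum_{Q_{j}\subset B(\omega,r)}c_{j}r_{j}^{3}\lesssim\sigma\big(T(B(\omega,r)\cap\partial B)\big)$ is unjustified and false in general. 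The paper avoids this by not applying the Carleson condition cube by cube: it first disposes of the trivial case $d(x,\partial B)>\tfrac14$, then for $d(x,\partial B)\leq\tfrac14$ it locates the boundary point $\omega_{x}$ radially associated with $x$ via the horizontal radial curves \eqref{def:rad-curv}, sets $E_{n}=\{y\in B:\ d(y,\omega_{x})<2^{n}d(x,\partial B)\}$, uses the Carleson bound once per set, $\mu(E_{n})\leq\gamma_{\mu}2^{3n}d(x,\partial B)^{3}$, and the decay $\frac{d(a,x)}{d(y,a)^{2}}\lesssim\frac{1}{2^{2n}d(x,\partial B)}$ on $E_{n}\setminus E_{n-1}$ (using $d(y,a)>d(y,\omega_{x})$ since $a\notin\overline{B}$), so that the contributions sum like $\sum_{n}2^{-3n}<\infty$ and $M\lesssim_{\gamma_{\mu}}1$. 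To repair your proposal you would need to (i) actually invoke or prove \eqref{eq:ComparabilityJ} uniformly in the admissible parameters, and (ii) replace the per-cube Carleson estimate by the per-annulus (Carleson-box) estimate keyed to the location of $x$ and $a$.
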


In Remark 4.1 we also point to the generalization of the above theorem to the setting of higher order Heisenberg groups $\Hn$ for $n\geq 2$.

Section 5 contains main results of our work. The first one is the $L^2$-estimate for the square function of a subelliptic harmonic function on a bounded NTA domain in $\Hn$ with respect to the $L^2$ boundary data and the harmonic measure $\om$. The result generalizes Theorem 9.1 in~\cite{jk} proved for bounded NTA domains in $\Rn$.

\begin{theorem}[$L^2$-boundedness of the square function]\label{thm-jk91}
 Let $\Om\subset \Hn$ be a bounded NTA domain. Let further $f\in L^2(\ud \om)$ and $u(x):=\int_{\partial \Om} f(y)\ud \om^x(y)$. Then, the following estimate holds for the square function $S_{\alpha}$ of a harmonic function $u$ in $\Om$
 \[
  \|S_{\alpha}u\|_{L^2(\ud \om)}\leq C\|f\|_{L^2(\ud \om)}.
 \]
\end{theorem}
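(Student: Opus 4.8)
The plan is to follow the template of Jerison--Kenig's proof of their Theorem 9.1, adapting each analytic ingredient to the subelliptic setting on NTA domains in $\Hn$. The square function of $u$ is $S_\alpha u(\om)^2 = \int_{\Gamma_\alpha(\om)} |\nabla_{\mathbb H} u(x)|^2 \, \delta(x)^{2-Q}\, dx$, where $\Gamma_\alpha(\om)$ is the non-tangential approach region at $\om\in\partial\Om$, $\delta(x)=d(x,\partial\Om)$ and $Q=2n+2$ is the homogeneous dimension. First I would reduce to a bound of the form $\int_{\partial\Om} S_\alpha u(\om)^2 \, d\om(\om) \lesssim \int_\Om |\nabla_{\mathbb H} u(x)|^2 \, G(x_0,x)\, dx$, where $G$ is the Green function with pole at a fixed interior point $x_0$ and $\om=\om^{x_0}$. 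This step is an application of Fubini's theorem together with the key comparison $\int_{\partial\Om} \mathbf 1_{\Gamma_\alpha(\om)}(x)\, d\om(\om) \approx \om(\Delta(x))$ (the harmonic measure of the boundary ball of radius $\sim\delta(x)$ centered at the footpoint of $x$), and then the estimate $\om(\Delta(x)) \approx \delta(x)^{Q-2} G(x_0,x)$, which is the standard consequence on NTA domains of the boundary Harnack principle and the comparison of Green function with harmonic measure — these are available in the Heisenberg NTA theory developed in~\cite{cgn} and used throughout the paper.

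Next I would establish the core energy identity. Since $u$ is subelliptic harmonic, $\mathcal L(u^2) = 2|\nabla_{\mathbb H} u|^2$ (where $\mathcal L$ is the sub-Laplacian, using $\mathcal L u =0$). Applying the Green representation / integration by parts against $G(x_0,\cdot)$ on $\Om$, and using that $\mathcal L G(x_0,\cdot) = -\delta_{x_0}$, gives, at least formally,
\[
 \int_\Om |\nabla_{\mathbb H} u(x)|^2\, G(x_0,x)\, dx \;\approx\; \int_{\partial\Om} |u|^2\, d\om^{x_0} - |u(x_0)|^2 \;\le\; \|f\|_{L^2(d\om)}^2,
\]
since $u(x_0)=\int f\, d\om^{x_0}$ and $u=f$ on $\partial\Om$ in the appropriate sense (here I would invoke that $u$ is the harmonic extension of $f\in L^2(d\om)$, so the boundary integral is exactly $\|f\|_{L^2(d\om)}^2$). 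Combining this with the first reduction yields $\|S_\alpha u\|_{L^2(d\om)}^2 \lesssim \|f\|_{L^2(d\om)}^2$, which is the claim.

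The main obstacle is making the integration-by-parts step rigorous: $G(x_0,\cdot)$ is singular at $x_0$ and not smooth up to $\partial\Om$, and $u$ need not have enough regularity at the boundary for a naive Green identity. The standard fix, which I would carry out, is to work on a regularized domain: exhaust $\Om$ by smooth subdomains $\Om_j\Subset\Om$ (for which $u$ is smooth), excise a small sub-Riemannian ball $B(x_0,r)$ around the pole, apply the Green identity on $\Om_j\setminus B(x_0,r)$, and then let $r\to 0$ and $\Om_j\uparrow\Om$. The limit $r\to 0$ produces the $-|u(x_0)|^2$ term plus a vanishing contribution (controlled because $G(x_0,x)\approx d(x_0,x)^{2-Q}$ and the sphere has measure $\sim r^{Q-1}$, while $u^2$ and its gradient are bounded near $x_0$). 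For the limit $\Om_j\uparrow\Om$ one needs the boundary term $\int_{\partial\Om_j}|u|^2\,\partial_\nu G\, d\sigma$ to converge to $\int_{\partial\Om}|u|^2\, d\om^{x_0}$; on NTA domains this follows from the identification of $\partial_\nu G\, d\sigma$-type quantities with harmonic measure in the limit, together with the $L^2(d\om)$-convergence of the boundary traces of $u$ along $\Om_j$ — this is exactly where the hypotheses of the theorem (NTA, $f\in L^2(d\om)$, $u$ the Poisson-type extension of $f$) and the potential theory of~\cite{cgn} are used. A technically cleaner alternative, which I would mention, is to first prove the inequality for $f$ bounded (or continuous), where all boundary traces are classical and $S_\alpha u\in L^\infty$-type estimates are available, obtaining the bound with a constant independent of $f$, and then extend to general $f\in L^2(d\om)$ by density of bounded functions and Fatou's lemma applied to $S_\alpha u$. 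Either route delivers the stated estimate with the constant $C$ depending only on the NTA parameters of $\Om$, the aperture $\alpha$, and the homogeneous dimension $Q$.
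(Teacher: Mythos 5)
Your proposal follows essentially the same route as the paper: Fubini plus the Dahlberg-type comparison $\om^z(\Delta(q_y,\cdot))\approx d(y,\partial\Om)^{Q-2}G(z,y)$ to reduce $\|S_\alpha u\|_{L^2(\ud\om)}^2$ to $\int_\Om|\nabla_H u|^2G(\cdot,z)$, and then the energy identity $\int_\Om|\nabla_H u|^2G(x,z)\,\ud x=\tfrac12\int_{\partial\Om}|f-u(z)|^2\ud\om^z$ coming from $\Delta_H(u^2)=2|\nabla_H u|^2$. Two technical points where the paper proceeds differently are worth noting. First, the paper does not apply the harmonic-measure/Green-function comparison globally: it truncates the cones to $\Gamma_\alpha^h(x)=\Gamma_\alpha(x)\cap B_d(x,h)$ so that the pole $z$ lies outside every truncated cone (the Dahlberg estimate of Theorem 1 in \cite{cg98} requires $z\in\Om\setminus B_d(q_y,ar)$), and it controls the remaining region $\Gamma_\alpha(x)\setminus\Gamma_\alpha^h(x)$ separately by interior gradient estimates for harmonic functions together with a Harnack-chain argument; your blanket statement $\om(\Delta(x))\approx d(x,\partial\Om)^{Q-2}G(x_0,x)$ is not covered by the cited estimates for $x$ near the pole and would need the same splitting (or a separate lower bound for $G(x_0,\cdot)$ there). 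Second, instead of your hands-on exhaustion-and-excision justification of the Green identity, the paper packages it as a Riesz-representation statement for the subharmonic function $u^2$ (via Theorem 9.4.7 in \cite{blu}, with the least harmonic majorant identified through the boundary data, and an approximation of $f\in L^2(\ud\om^z)$ by continuous functions, following Theorem 5.14 of \cite{jk}); your regularization scheme is a plausible alternative but carries the burden of proving the convergence of the boundary terms to harmonic measure on an NTA domain, which the citation route avoids. With these two repairs your argument delivers the same estimate with the same dependence of the constant.
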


Our second main result is the subriemannian counterpart of the Euclidean result, i.e. Theorem  9.6 in~\cite{jk}. Moreover, it also generalizes Theorem 3.4 in~\cite[Chapter VI.3 ]{ga} for the unit disc in the plane, see Remark~\ref{rem-Garnett-34}. We further refer to Example~\ref{ex-ball} for the case of the unit gauge ball in $\Hn$, where the Green function $G$ in the assertion of Theorem~\ref{thm-jk96} can be explicitly estimated from below in terms of the distance function, thus providing more classical and handy estimate~\eqref{est-Carl-true}. In order to obtain this estimate we prove Proposition A.1 in the Appendix.

\begin{theorem}[Carleson measure estimate]\label{thm-jk96}
 Let $\Om\subset \Hn$ be a bounded NTA domain and $u$ be a subelliptic harmonic in $\Om$ such that $u(x)=\int_{\partial \Om} f(y)\ud \om^x(y)$ for some $f\in BMO(\partial \Om)$. Then, for any ball $B(x_0, r)$ centered at $x_0\in \partial \Om\setminus \Sigma_{\Om}$ with any $0<r<r_0\leq \min\{1, \frac{d(x_0,\Sigma_{\Om})}{M}\}$ it holds that
 \[
 \int_{B(x_0,r)\cap \Om} |\nabla_H u|^2 G(x, A_r(x_0))\ud x\leq C\om(B(x_0,r)\cap \partial \Om),
 \]
 where constant $C$ depends on $n, M, r_0$ and $\|f\|_{BMO(\partial \Om)}$. 
\end{theorem}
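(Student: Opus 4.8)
The plan is to carry over to the subelliptic setting the argument behind Theorem~9.6 in \cite{jk}, whose planar ancestor is Theorem~3.4 of Chapter~VI.3 in \cite{ga}. Fix $x_0\in\partial\Om\setminus\Sigma_\Om$, $0<r<r_0$, write $\Delta_s:=B(x_0,s)\cap\partial\Om$, and let $A:=A_r(x_0)\in B(x_0,r)\cap\Om$ be the corkscrew point, so $d(A,x_0)\approx r\approx d(A,\partial\Om)$. The hypothesis $r<d(x_0,\Sigma_\Om)/M$ forces the enlarged box $B(x_0,Mr)\cap\overline\Om$ to remain in the region where $\partial\Om$ is non-characteristic, and there the potential-theoretic toolbox of \cite{cgn} is available: doubling and the Carleson estimate for harmonic measure, the change-of-pole comparison, the geometric decay $\om^x(\partial\Om\setminus\Delta_{2^j s})\lesssim 2^{-j\beta}$ of harmonic measure of far sets, the Caffarelli--Fabes--Mortola--Salsa-type comparison $G(x,A)\approx\delta(x)^{2-Q}\om^A(\Delta_{\delta(x)})$ with $\delta(x):=\dist(x,\partial\Om)$, the size bounds $G(x,A)\lesssim r^{2-Q}\om^A(\Delta_r)$ and $|\nabla_H G(x,A)|\lesssim r^{1-Q}\om^A(\Delta_r)$ for $x$ away from $A$, and the boundary Harnack inequality of \cite{gp} ($Q$ denotes the homogeneous dimension of $\Hn$). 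Since $\nabla_H u$ is unaffected by adding a constant to $u$, we subtract $f_{\Delta_{2r}}:=\dashint_{\Delta_{2r}}f$ and assume $f_{\Delta_{2r}}=0$; then John--Nirenberg together with the telescoping estimate $|f_{\Delta_{2^j r}}|\lesssim(j+1)\|f\|_{BMO(\partial\Om)}$ yields $\dashint_{\Delta_{2^j r}}|f|^2\lesssim(j+1)^2\|f\|_{BMO(\partial\Om)}^2$ at every scale $2^j r\le\diam\Om$.

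Next I would perform the classical splitting $f=f_1+f_2$ with $f_1:=f\chi_{\Delta_{2r}}$, $f_2:=f\chi_{\partial\Om\setminus\Delta_{2r}}$, and $u_i(x):=\int_{\partial\Om}f_i\,d\om^x$, so that $u=u_1+u_2$ and $|\nabla_H u|^2\le2|\nabla_H u_1|^2+2|\nabla_H u_2|^2$ on $\Om$. For the local term, the identity $\Delta_H(u_1^2)=2|\nabla_H u_1|^2$ (valid since $u_1$ is subelliptic harmonic) together with the Green representation formula and the vanishing of $G(\cdot,A)$ on $\partial\Om$ gives
\[
\int_{B(x_0,r)\cap\Om}|\nabla_H u_1|^2\,G(x,A)\,dx\;\le\;\int_{\Om}|\nabla_H u_1|^2\,G(x,A)\,dx\;=\;\tfrac{1}{2}\!\int_{\partial\Om}\!\bigl(u_1-u_1(A)\bigr)^2 d\om^{A}\;\le\;\tfrac{1}{2}\!\int_{\Delta_{2r}}\! f^2\,d\om^{A},
\]
and since $A$ is a corkscrew point the last integral is, by John--Nirenberg and the change-of-pole estimate, $\lesssim\om^A(\Delta_{2r})\|f\|_{BMO}^2\approx\|f\|_{BMO}^2\,\om(B(x_0,r)\cap\partial\Om)$. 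Alternatively this step follows from the square-function bound of Theorem~\ref{thm-jk91} by Fubini and the Caffarelli--Fabes--Mortola--Salsa comparison on the box.

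For the global term, the first step is the pointwise bound $|u_2(x)|\lesssim\|f\|_{BMO}$ for $x\in B(x_0,\tfrac{3}{2}r)\cap\Om$: writing $u_2(x)=\sum_{j\ge1}\int_{\Delta_{2^{j+1}r}\setminus\Delta_{2^j r}}f\,d\om^x$ and estimating each shell by Cauchy--Schwarz, the change-of-pole comparison, the bound $\dashint_{\Delta_{2^{j+1}r}}|f|^2\lesssim(j+1)^2\|f\|_{BMO}^2$ and the decay $\om^x(\partial\Om\setminus\Delta_{2^j r})\lesssim2^{-j\beta}$, one sums the convergent series $\sum_j 2^{-j\beta/2}(j+1)\|f\|_{BMO}$. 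Then I pick a cutoff $\psi$ with $\psi\equiv1$ on $B(x_0,r)\cap\Om$, $\operatorname{supp}\psi\subset B(x_0,\tfrac{3}{2}r)$, $|\nabla_H\psi|\lesssim1/r$, $|\Delta_H\psi|\lesssim1/r^2$, and apply Green's identity to $u_2^2$ and $G(\cdot,A)\psi$ on $\Om$. The boundary integral on $\partial\Om$ vanishes because $G(\cdot,A)=0$ there while $\operatorname{supp}\psi\cap\operatorname{supp}f_2=\emptyset$; the Dirac mass at $A$ contributes $-u_2(A)^2\psi(A)\le0$ (and, as $A\in\{\psi\equiv1\}$, the commutator terms avoid the pole); and those commutator terms live on the annulus $\{r\le d(\cdot,x_0)\le\tfrac{3}{2}r\}\cap\Om$ of volume $\approx r^Q$, where $|u_2|\lesssim\|f\|_{BMO}$ and the above size bounds for $G(\cdot,A)$ and $\nabla_H G(\cdot,A)$ apply. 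This yields $\int_{B(x_0,r)\cap\Om}|\nabla_H u_2|^2 G(x,A)\,dx\lesssim\|f\|_{BMO}^2\,\om^A(\Delta_r)\approx\|f\|_{BMO}^2\,\om(B(x_0,r)\cap\partial\Om)$. Adding the two estimates and tracking the dependence of the constants on the NTA character of $\Om$, $n$, $M$, $r_0$ and $\|f\|_{BMO(\partial\Om)}$ completes the proof.

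The main difficulty is the global term: one has to establish the dyadic-shell decay of harmonic measure and the change-of-pole comparison \emph{uniformly across all scales} $2^j r$ on a possibly irregular NTA domain, and to control simultaneously the size and the horizontal gradient of $G(\cdot,A)$ on the transition annulus; both rest crucially on the boundary Harnack inequality of \cite{gp} and the estimates of \cite{cgn}, and both require keeping the construction away from the characteristic set $\Sigma_\Om$, which is exactly what the hypothesis $r<d(x_0,\Sigma_\Om)/M$ secures.
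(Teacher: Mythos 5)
Your treatment of the local piece $u_1$ coincides with the paper's: the Riesz/Green identity (Proposition~\ref{prop-thm514}) plus John--Nirenberg, and your telescoping bound $\dashint_{\Delta_{2^jr}}|f|^2\lesssim (j+1)^2\|f\|_{BMO}^2$ is exactly what is used. For the global piece $u_2$ you genuinely diverge. The paper bounds $|\nabla_H u_2(x)|$ pointwise by the interior gradient estimate, compares $v(x)=\int_{\partial\Om\setminus c\Delta_1}|f-f_{\Delta_1}|\,\ud\om^x$ with $G(\cdot,A_r(x_0))$ via the boundary Harnack principle (Theorem~\ref{cg-thm22}), and reduces everything to the weighted integral $\int_{B(x_0,r)\cap\Om}G^3(x,A_r(x_0))\,d(x,\partial\Om)^{-2}\ud x$, which is then controlled by a Whitney decomposition together with the Dahlberg estimate (Claim~2). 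You instead prove $|u_2|\lesssim\|f\|_{BMO}$ throughout $B(x_0,\tfrac32 r)\cap\Om$ and integrate $\Delta_H(u_2^2)=2|\nabla_H u_2|^2$ against $G(\cdot,A)\psi$. That route is legitimate in spirit and avoids the boundary Harnack inequality, but it trades one difficulty for two others.

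First, the commutator estimate as you state it has a gap: the pointwise bound $|\nabla_H G(x,A)|\lesssim r^{1-Q}\om^A(\Delta_r)$ is \emph{not} uniform over the transition annulus $\{r\le d(x,x_0)\le\tfrac32 r\}\cap\Om$. On a general NTA domain $G(\cdot,A)$ is only H\"older continuous up to the boundary, so near $\partial\Om$ the interior gradient estimate only gives $|\nabla_H G(x,A)|\lesssim \delta(x)^{-1}\sup_{B(x,\delta(x)/4)}G(\cdot,A)$, which blows up as $\delta(x)\to0$. What you actually need are the \emph{integrated} bounds $\int_{\mathrm{ann}}|\nabla_H G(\cdot,A)|\lesssim r\,\om^A(\Delta_r)$ and $\int_{\mathrm{ann}}G(\cdot,A)\lesssim r^2\om^A(\Delta_r)$; these are true, but proving them forces you through a Whitney decomposition of the annulus combined with the Dahlberg comparison and Harnack chains --- i.e.\ essentially the same machinery as the paper's Claim~2, so nothing is saved there. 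Second, on an NTA domain the Green identity with boundary terms must be justified by exhausting $\Om$ with smooth subdomains; the vanishing of $\int_{\partial\Om_j}u_2^2\,\partial_\nu(G\psi)$ requires not just $\operatorname{supp}\psi\cap\operatorname{supp}f_2=\emptyset$ but the nontangential convergence $u_2\to0$ $\om$-a.e.\ on $\Delta_{2r}$ together with your $L^\infty$ bound and a dominated-convergence argument against $\ud\om^A$; this needs to be spelled out. With those two repairs your argument closes, and it would give an alternative to the boundary-Harnack step; as written, the pointwise gradient bound on the annulus is the concrete gap.
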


Among corollaries of Theorem~\ref{thm-jk96} we show the corresponding Carleson estimates on an ADP domain (Corollary 5.1) and on the (Euclidean) $C^{1,1}$-domain (Corollary 5.2).

The proof of Theorem~\ref{thm-jk96} consists of several steps and auxiliary observations which largely follow the steps of the corresponding proof of Theorem  9.6 in~\cite{jk}. However, we expand several arguments in~\cite{jk} and clarified steps which in the new setting of Heisenberg groups require using the subriemannian tools.

Our last result, proved in Section 6, is a counterpart of the classical Fatou theorem for harmonic functions on $(\ep,\delta)$-domains in $\Hn$, under the condition of the $L^p$-integrability of the gradient of the function. The $(\ep,\delta)$-domains in $\Hn$ can be thought of as the quantified version of the uniform domains and contain large family of NTA domains, see the detailed presentation in Section 6.

\begin{theorem}\label{thm-Fatou}
 Let $\Om\subset \Hn$ be a bounded $(\ep, \delta)$-domain and let further $u$ be harmonic in $\Om$. If $\int_{\Om} |\nabla_H u|^p<\infty$ for some $1<p\leq 2n+2$, then $u$ has nontangential limits on $\partial \Om$ along horizontal curves in $\Om$ outside the set of $p$-Sobolev capacity zero.
\end{theorem}

This result extends previous observations in the Heisenberg setting in two ways:
\smallskip
\\
\noindent (1) the considered domains are slightly more general than in a Fatou theorem on NTA domains in $\Hn$ (\cite[Theorem 4]{cg98}) and in $\Rn$ (\cite[Theorem 6.4]{jk}); 
\smallskip

\noindent (2) the assertion gives the existence of nontangential limits not only up to the measure zero set as e.g. in~\cite{cg98}, but outside the set of $p$-Sobolev capacity zero, which is a refined measure.

\section{Preliminaries}

In this section we recall key definitions employed in the paper. Our presentation includes the Heisenberg group, various types of domains and their geometry, basic information on subelliptic harmonic functions and Green functions in subriemannian setting, the Carleson measures, the non-tangential maximal function and the BMO spaces.
 
\subsection{Heisenberg groups} The $n$-th Heisenberg group $\Hn$ as a set is $\mathbb{R}^{2n}\times\mathbb{R}\simeq\mathbb{C}^n\times\mathbb{R}$ with the group law given by 
\[
(z_1,\dots,z_n,t)\cdot(z_1',\dots, z_n',t')=\Big(z_1+z_1',\dots, z_n+z_n',t+t'+2{\rm Im}\Big(\sum_{i=1}^{n}z_i\overline{z_i'}\Big)\Big),
\]
where $(z_1,\dots,z_n,t)=(x_1,y_1,\dots,x_n,y_n,t)$.  Furthermore, we define the following left-invariant vector fields 
\begin{align*}
X_i(p)&=\frac{\partial}{\partial x_i}+2y_i\frac{\partial}{\partial t}, \hspace{5mm} Y_i(p)=\frac{\partial}{\partial y_i}-2x_i\frac{\partial}{\partial t},\quad i=1,\ldots, n, \hspace{5mm} T=\frac{\partial}{\partial t}
\end{align*}
for which the only nontrivial brackets are 
$$
[X_i, Y_i] =-4T \quad i=1,\dots,n.
$$ 
The horizontal space at $p\in \Hn$ is given pointwise by
\begin{align*}
\mathcal{H}_p\Hn=\textnormal{span}\{X_1(p),Y_1(p),\dots,X_n(p),Y_n(p)\}.
\end{align*}
Let $\gamma:[0,S]\rightarrow\mathbb{R}^{2n+1}$ be an absolutely continuous curve. We say that $\gamma$ is horizontal if ${\dot{\gamma}(s)\in\mathcal{H}_{\gamma(s)}\Hn}$ for almost every $s$. Now, we equip $\mathcal{H}_p\Hn$ with the left invariant Riemannian metric such that vector fields $X_i,Y_i$ are orthonormal, and so if $v\in\mathcal{H}_p\Hn$ is given as $v=\sum_{i=1}^n a_i X_i(p)+b_i Y_i(p)$, then the following expression defines a norm $|v|_H=\sqrt{\sum_{i=1}^n (a_i^2+b_i^2)}$. In a consequence, we define the Carnot-Carath\'{e}odory distance in $\Hn$ as follows:
\[
d_{CC}(p,q)=\inf_{\Gamma_{p,q}}\int_a^b |\dot{\gamma}(s)|_H\ud s,
\]
where $\Gamma_{p,q}$ denotes the set of all horizontal curves joining $p$ and $q$, such that $\gamma$ joins points $p$ and $q$: $\gamma(a)=p$ and $\gamma(b)=q$.

 Equipped with the above structure, the Heisenberg group $\Hn$ becomes a subriemannian manifold and a Carnot--Carath\'eodory group, in addition to being a metric space. However, the Carnot-Cara\-th\'{e}\-odory distance can be troublesome and, hence, we introduce the so-called Kor\'{a}nyi--Reimann distance, defined as follows:
\[
d_{\Hn}(p,q)=\|q^{-1}\cdot p\|,
\]
where the pseudonorm is given by
\[
\|p\|:=\|(z,t)\|:=\left(|z|^4+t^2\right)^{\frac{1}{4}}.
\] 
The Kor\'{a}nyi--Reimann distance is equivalent (comparable) to $d_{CC}$ and hence both distances generate the same topology, see e.g.~\cite{bel}. However, $d_{\Hn}$ is easier in computations and therefore, throughout this paper we use Kor\'{a}nyi--Reimann distance $d_{\Hn}$, rather then the subriemannian distance.

Finally, we recall that the left invariant Haar measure on $\Hn$ is simply the $(2n+1)$-dimensional Lebesgue
measure on $\Hn$ and it follows that $\Hn$ is $Q$-Ahlfors regular, with $Q=2n+2$, i.e.
there exists a positive constant $c$ such that for all balls $B$ with radius $r>0$ we have
\[
\frac{1}{c} r^Q \leq \mathcal{H}^Q(B) \leq c r^Q,
\]
where $\mathcal{H}^Q$ stands for the $Q$-dimensional Hausdorff measure induced by $d_{\Hn}$.

%and it turns out that the balls in Kor\'{a}nyi distance have some properties which balls in $d_{CC}$ do not satisfy. We will explain what we mean by it in the next section.

\subsection{Geometry of domains} One of the fundamental types of domains studied in our work are the NTA domains and the ADP domains, whose definitions and basic properties we now recall.

%Recall the following Definition 5.11 in~\cite{cgn}.
 
\begin{defn}[NTA domain, cf. Definition 5.11 in~\cite{cgn}]\label{def:NTA}
We say that a domain $\Om\subset \Hn$ is a nontangentially accessible domain (NTA, for short) if there exist constants $M, r_0>0$ such that:
\begin{enumerate}
\item (Interior corkscrew condition). For any $x\in\partial\Om$ and $r\le r_0$ there exists $A_r(x)\in\Om$ such that 
\[
\frac{r}{M}<d_{CC}(A_r(x),x)\le r\quad \hbox{and} \quad d_{CC}(A_r(x),\partial\Om)>\frac{r}{M}.
\]
\item (Exterior corkscrew condition). The complement $\Om^c:=\Rn\setminus \Om$ satisfies the interior corkscrew condition.
\item (Harnack chain condition). For every $\varepsilon>0$ and $x,y\in\Om$ such that $d_{CC}(x,\partial\Om)>\varepsilon$, $d_{CC}(y,\partial\Om)>\varepsilon$ and $d_{CC}(x,y)<C\varepsilon$ there exists a sequence of balls $B_1,\dots, B_k$ with the following properties:
\begin{enumerate}
\item $x\in B_1$ and $y\in B_k$,
\item $\frac{r}{M}<d_{CC}(B_i(x,r),\partial\Om)<Mr$ for every $i=1,\dots,k$,
\item $B_i\cap B_{i+1}\neq\emptyset$ for $i=1,\dots,k-1$,
\item length of the chain $k$ depends on $C$ but not on $\varepsilon$.
\end{enumerate}
\end{enumerate}
\end{defn}

In the corresponding Definition 1 in~\cite{cg98} the analogous notion of the $X$-NTA domains is considered. There, one
lets $X=\{X_1,\dots,X_m\}$ be a family of smooth vector fields satisfying the H\"ormander rank condition, and so $d_{CC}$ denotes the Carnot-Carath\'{e}odory distance related to $X$. For example, in the Heisenberg group $\Hn$ our family of vector fields is $X:=\{X_1,Y_1,\dots,X_n,Y_n\}$, cf. Section 2.1 above.

The notion of the NTA domain originates from a work of Jerison--Kennig, see~\cite[Section 3]{jk}. Notice that the above definition makes sense also in the setting of metric space, in which case, the distance need not be induced by a family of vector fields. 

Examples of NTA domains in $\Rn$ encompass Lipschitz domains, Zygmund domains and quasispheres (snow-flake domains).  Another example of an NTA domain is a complement of a planar Cantor set in a large enough ball, denoted by $\Om$. It turns out that $\Om$ satisfies our definition, even though such a Cantor set is not rectifiable as a part of the $1$-dimensional boundary of $\Om$. Intuitively speaking, one can think that conditions (1) and (2) exclude both interior and exterior cusps, while condition (3) eliminates a possibility of slits within a domain or narrowings that are infinitely thin. Examples of NTA domains in $\Hn$, or in more general Carnot groups, include:
\smallskip
\\
\indent - bounded $C^{1,1,}$ sets with cylindrical symmetry (Theorem 5 in~\cite{cg98}),

- level sets of fundamental solutions of the real part of the sub-Laplacian (Corollary 2 in~\cite{cg98}), 

- balls in the metric $d_{\Hn}$, see Corollary 4 and Proposition 1 in~\cite{cg98},

- an image of an NTA domain in $\Hn$ under the global quasiconformal map $f:\Hn \to \Hn$ is an NTA domain, see~\cite{ct}.

We refer to Section 5 in~\cite{cg98} for further examples of NTA domains. However, it turns out that balls in $d_{CC}$ are not NTA domains.
This partially motivates that from the point of view of our studies, the $d_{\Hn}$ distance has an advantage over Carnot-Carath\'e\-odory distance. 
\smallskip
\\
From now on, unless specified differently, let us denote by $d:=d_{\Hn}$.
\smallskip  
% Since the distances $d_{CC}$ and $d_{\Hn}$ are equivalent on $\mathbb{H}^n$ we can replace the distance $d_{CC}$ in the definition of $X$-NTA sets with $d_{\Hn}$. Therefore, we can forget about vector fields and the distance $d_{CC}$ and use only $d_{\Hn}$ for dealing with $X$-NTA domains that from now on we will call NTA domains as we no longer care about family $X$. 
 
Basing on the notion of the NTA domains we now recall the second fundamental type of domains considered in this work, namely the so-called domains \emph{admissible for the Dirichlet problem}, ADP for short, see~\cite{cgn}. Such a class is defined by combining the above notion of NTA domains with the existence of a uniform outer ball. As observed in~\cite{cgn} , it can be viewed as the closest nonabelian counterpart of the class of $C^{1,1}$ domains from Euclidean analysis.

\begin{defn}[cf. Definition 2.1 in~\cite{cgn}]\label{def: ADP}
 We say that a bounded domain $\Om \subset \Hei$ is \emph{admissible for the Dirichlet problem}, denoted by ADP, if $\Om$ is NTA and satisfies the uniform outer ball condition with respect to the metric $d$. 
% \komT{$N$ w CGN jest troche inne niz nasze, czy definicje sa porownywalne? Raczej tak.}\komM{W CGN funkcja $N$ rozni sie tylko stala rowna 16. Zatem metryki sa porownywalne. W innych dowodach w CG nie bylo istotne, jaka jest stala, wiec wyniki z CG sa prawdziwe zarowno dla stalej rownej 1, jak i 16. Przejrzalem tez dowody w CGN i przez stala 16 w dowodach w rozdziale 2. wyskakuja jakies stale, ale nie widze problemu, zeby nie mogla byc stala 1.}
\end{defn}

\subsection{Subelliptic harmonic functions and Green functions}

Below we collect some of the basic definitions and potential theoretic results for the theory of subelliptic harmonic functions in Heisenberg groups $\Hn$.
%\begin{defn}%\label{def: grad}

Let $\Om\subset \Hn$ be an open set in the Heisenberg group $\Hn$. We say that a function $u:\Om\rightarrow\mathbb{R}$ belongs to the \emph{horizontal Sobolev space} $HW^{1,2}(\Om)$, if $u\in L^2(\Om)$ and the horizontal derivatives $X_i u, Y_i u$ for $i=1,\ldots, n$ exist in the distributional sense and belong to $L^2(\Om)$. Similarly, we define the local horizontal Sobolev space $HW_{loc}^{1,2}(\Om)$.

The \emph{horizontal gradient} $\nabla_H u$ is given by the following equation:
\[
\nabla_H u:=\sum_{i=1}^n (X_iu)X_i+(Y_iu)Y_i.
\]
Next, we define the sub-Laplace operator of $u$: 
\[
\Delta_Hu:=\sum_{i=1}^n (X_i)^2u+(Y_i)^2u,
\]
and say that $u\in HW_{loc}^{1,2}(\Om)$ is \emph{subelliptic harmonic} in $\Om$, if $\Delta_Hu=0$ in the weak sense. In what follows, for the sake of simplicity, we will omit the word subelliptic and write \emph{harmonic functions}, instead. 

Recall that harmonic functions in $\Hn$ are smooth (in fact analytic) and satisfy the weak maximum principle and the Harnack inequality, see Chapters 8 and 5 in~\cite{blu}, respectively.
%\end{defn}

Let $G(x,y)=G(y,x)=G_{\Om}(x,y)$ denote the Green function for the sub-Laplacian and for the domain $\Om\subset \Hn$. We refer to~\cite{cg98} and to Chapter 9 in \cite{blu} for definitions and basic properties of Green functions. Moreover, in the Appendix we provide a proof of one of the standard properties of Green functions needed in Example~\ref{ex-ball}. The result is likely a mathematical folklore in $\Hn$, but since we did not find it explicitly in the literature for Carnot groups, we provide the full argument.

The following observations from~\cite{cg98} will frequently be used, especially in Section 5. Here, we formulate them for gauge balls rather then for the metric balls. This is justified by the equivalence of both metrics in $\Hn$.

Let $\Delta(x,r):=B(x,r)\cap \partial \Om$ denote the surface ball at $x\in \partial \Om$ with radius $r>0$.

\begin{theorem}[Dahlberg-type estimate, cf.~Theorem 1~\cite{cg98}]\label{cg-thm21}
Let $\Om\subset \Hn$ be an NTA domain with parameters $M, r_0>0$ and let further $x_0\in\partial\Om$ and $r < \frac{r_0}{2}$. Then, there exist $a >1$ and $C >0$, depending on $\Delta_H, M$ and $r_0$, such that 
for every $x\in\Om\setminus B_d(x_0, ar)$
\[ 
C \frac{|B_d(x,r)|}{r^2}G(x, A_r(x_0)) \le \om^x (\Delta(x_0, r)) \le C^{-1} \frac{|B_d(x,r)|}{r^2} G(x, A_r(x_0)),
\]
where $G$ denotes a Green function of $\Om$.
\end{theorem}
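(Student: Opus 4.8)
\emph{Sketch of the intended proof.} I would prove this exactly as the subriemannian form of Dahlberg's comparison between harmonic measure and the Green function, following the maximum-principle argument of Caffarelli--Fabes--Mortola--Salsa in the NTA formulation of Jerison--Kenig~\cite{jk}; this is in fact Theorem~1 of~\cite{cg98}, so I only describe the scheme. Write $A:=A_r(x_0)$, $\Delta:=\Delta(x_0,r)$, $Q:=2n+2$, and let $\Gamma$ be the fundamental solution of $\Delta_H$, so that $\Gamma(p,q)\approx d(p,q)^{2-Q}$ by homogeneity. Since the Haar measure is left-invariant, $|B_d(p,r)|$ depends only on $r$; hence $\tfrac{|B_d(x,r)|}{r^{2}}\approx r^{Q-2}$ is a constant, and the function $v(x):=\tfrac{|B_d(x,r)|}{r^{2}}\,G(x,A)$ is positive and $\Delta_H$-harmonic in $\Om\setminus\{A\}$, as is $u(x):=\om^{x}(\Delta)$ in $\Om$. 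I would fix $b>1$ large enough that $A\in B_d(x_0,br/2)$ (possible since $d_{CC}(A,x_0)\le r$ and $d\approx d_{CC}$) and work on $W:=\Om\setminus\overline{B_d(x_0,br)}$: on $W$ both $u$ and $v$ are positive and $\Delta_H$-harmonic, and on the set $\partial\Om\setminus\overline{B_d(x_0,br)}$, which is disjoint from $\overline{\Delta}$, both vanish continuously (every NTA boundary point being regular for $\Delta_H$). By the weak maximum principle the theorem then reduces to establishing $u\approx v$ on the inner sphere $S:=\Om\cap\partial B_d(x_0,br)$, with constants depending only on $\Delta_H,M,r_0$; the two resulting inequalities propagate from $S$ to all of $W\supset\Om\setminus B_d(x_0,br)$.

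To compare $u$ and $v$ on $S$ I would split according to the distance of $x\in S$ to $\partial\Om$, using throughout a fixed deep corkscrew point $\widetilde{A}:=A_{\kappa r}(x_0)$ with $\kappa$ a fixed large multiple of $b$, so that $\widetilde{A}\in W$ and $d(\widetilde{A},A)\gtrsim r$. If $d(x,\partial\Om)\gtrsim r$, then $x$ and $\widetilde{A}$ are joined by a Harnack chain of bounded length (Definition~\ref{def:NTA}), so $u(x)\approx u(\widetilde{A})$ and $v(x)\approx v(\widetilde{A})$; here $v(\widetilde{A})\approx r^{Q-2}G(\widetilde{A},A)$ is pinned between two positive constants by the two-sided Green estimate $G(p,q)\approx d(p,q)^{2}/|B_d(p,d(p,q))|$, valid when $d(p,q)\lesssim d(q,\partial\Om)$ (upper bound from domination by $\Gamma$, lower bound from the interior corkscrew and Harnack), combined with a Harnack chain for $G(\cdot,A)$ from a point of $\partial B_d(A,\tfrac12 d(A,\partial\Om))$ to $\widetilde{A}$ kept away from the pole, while $u(\widetilde{A})=\om^{\widetilde{A}}(\Delta)$ is pinned between two positive constants by Bourgain's nondegeneracy $\om^{A}(\Delta)\ge c_0$ -- a consequence of the exterior corkscrew condition, cf.~\cite{cgn} -- carried from $A$ to $\widetilde{A}$ by Harnack. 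If instead $d(x,\partial\Om)$ is small and $\xi\in\partial\Om$ realizes $d(x,\partial\Om)=d(x,\xi)$, then $d(\xi,x_0)\approx br$, so a fixed surface ball $\Delta(\xi,\rho)$ with $\rho\approx r$ avoids $\overline{\Delta}$; applying the boundary Harnack principle of~\cite{gp} on $B_d(\xi,\rho)$ gives $u(x)/v(x)\approx u(A_{\rho/2}(\xi))/v(A_{\rho/2}(\xi))$, and since $A_{\rho/2}(\xi)$ lies at distance $\approx br$ from $x_0$ -- a fixed multiple of $r$ -- it is joined to $\widetilde{A}$ by a Harnack chain of bounded length, whence $u(A_{\rho/2}(\xi))/v(A_{\rho/2}(\xi))\approx u(\widetilde{A})/v(\widetilde{A})\approx 1$. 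Thus $u\approx v$ on all of $S$, and the maximum principle on $W$ concludes the proof, the equivalence constant $d\approx d_{CC}$ being absorbed into the final $C$.

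The maximum-principle reduction is clean, and the Harnack-chain bookkeeping is in principle routine; the genuinely subriemannian content -- and where the real work lies relative to the Euclidean case -- consists of the two potential-theoretic inputs: the two-sided interior Green estimate $G(p,q)\approx d(p,q)^{2}/|B_d(p,d(p,q))|$, which rests on the $Q$-homogeneity of $\Gamma$ and on the corkscrew conditions, and the boundary Harnack principle of~\cite{gp} for $\Delta_H$ on NTA domains, used in the near-boundary case. The most delicate point in practice is routing all the Harnack chains away from the pole of $G$; granting the two inputs above, the argument follows the Euclidean template of~\cite{jk}, and for the full details we refer to~\cite{cg98}.
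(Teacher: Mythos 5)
The paper offers no proof of this statement: it is imported directly from Capogna--Garofalo (cf.\ Theorem~1 in~\cite{cg98}), with only the remark that restating it for gauge balls instead of Carnot--Carath\'eodory balls is justified by the equivalence of the two metrics. Your proposal correctly identifies that source and sketches the standard CFMS/Jerison--Kenig maximum-principle scheme (comparison of $\om^{x}(\Delta)$ and $r^{Q-2}G(x,A_r(x_0))$ on an inner sphere via Harnack chains, boundary Harnack, Bourgain-type nondegeneracy, and interior Green estimates) that~\cite{cg98} follows, so it is consistent with the paper's treatment.
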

\begin{theorem}[Local comparison theorem, cf.~Theorem 3~\cite{cg98}]\label{cg-thm22}
Let $\Om\subset \Hn$ be an NTA domain with parameters $M, r_0>0$ and let further $x_0\in\partial\Om$ and $0< r < \frac{r_0}{M}$. If $u,v$ are nonnegative harmonic functions in $\Om$, that continuously vanish on $\Delta(x_0,Mr)$, then for any $x\in B_d(x_0,\frac{r}{2M})\cap\Om$ one has  
\[ 
\frac{u(x)}{v(x)} \le C\frac{u(A_r(x_0))}{v(A_r(x_0))} ,
\]
for some constant $C > 0$ which depends only on $\Delta_H, M$ and $r_0$.
\end{theorem}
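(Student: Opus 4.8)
The plan is to reduce the two-function statement to a one-function comparison with a fixed reference, and then to establish a two-sided bound for a single nonnegative harmonic function. Write $A:=A_r(x_0)$ for the interior corkscrew point supplied by Definition~\ref{def:NTA}. Since the asserted inequality is invariant under multiplying $u$ and $v$ by positive constants, I would first normalise so that $u(A)=v(A)=1$; the goal then becomes the one-sided bound $u(x)\le C\,v(x)$ for all $x\in B_d(x_0,\tfrac{r}{2M})\cap\Om$. The structural point is that it suffices to prove that every nonnegative harmonic $w$ on $\Om$ that vanishes continuously on $\Delta(x_0,Mr)$ satisfies, for $x\in B_d(x_0,\tfrac{r}{2M})\cap\Om$,
\[
w(x)\approx w(A)\,\frac{g(x)}{g(A)},
\]
where $g(x):=G(x,A')$ is the Green function with pole at a corkscrew point $A'=A_{R}(x_0)$ at a slightly larger scale $R\approx r$, chosen so that $A'$ lies at distance $\approx r$ from $\partial\Om$ and from $B_d(x_0,\tfrac{r}{2M})$. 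As $g$ is a single reference function independent of $w$, applying the comparison to both $u$ and $v$ and dividing cancels $g(x)/g(A)$ and yields $u(x)/v(x)\approx u(A)/v(A)$, which is the claim.

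For the upper bound $w\lesssim w(A)\,g/g(A)$ I would argue by the weak maximum principle on $U:=\Om\cap B_d(x_0,\rho)$ for a suitable $\rho\in(\tfrac{r}{2M},\tfrac{r}{M})$. Both $w$ and $g$ are harmonic in $U$ (the pole $A'$ lies outside $U$) and vanish continuously on $\partial\Om\cap B_d(x_0,\rho)\subset\Delta(x_0,Mr)$. On the interior portion $\Om\cap\partial B_d(x_0,\rho)$ I would control $w$ from above by the subelliptic Carleson estimate (a nonnegative harmonic function vanishing on $\Delta(x_0,Mr)$ is $\le C\,w(A)$ on $B_d(x_0,r)\cap\Om$; this follows from the interior and exterior corkscrew conditions together with the Harnack chain condition and the maximum principle, and is recorded in~\cite{cg98}), and I would bound $g$ from below there by $c\,g(A)$ using the interior Harnack inequality along Harnack chains joining interior points at scale $\approx r$. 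The maximum principle then propagates the comparison from $\partial U$ into $U$.

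The lower bound $w\gtrsim w(A)\,g/g(A)$ is the main obstacle, since a nonnegative harmonic function may a priori decay faster near $\partial\Om$ than the reference function, and the naive maximum-principle comparison degenerates on the part of $\partial U$ adjacent to $\partial\Om$, where both functions are small. This is exactly where the Dahlberg-type estimate of Theorem~\ref{cg-thm21} enters, via harmonic measure: by the nondegeneracy (Bourgain-type lower bound) of harmonic measure at the corkscrew, $\om^{x}(\Delta(x_0,r))$ is bounded below on the interior corkscrew region, while Theorem~\ref{cg-thm21} identifies $\om^{x}(\Delta(x_0,r))$ with $\tfrac{|B_d(x,r)|}{r^{2}}G(x,A)$ for $x$ outside $B_d(x_0,ar)$. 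Combining these with the Harnack chain condition furnishes the lower bound for $g$, and running the same scheme for $w$ shows that $w$ cannot decay faster than $g$; propagating this control down to the scale $B_d(x_0,\tfrac{r}{2M})$ through a chain of Harnack inequalities gives the two-sided comparison.

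Finally I would divide the comparison for $u$ by that for $v$ to cancel $g$, undo the normalisation, and track constants: every estimate above depends only on the sub-Laplacian $\Delta_H$, the NTA parameters $M,r_0$, and the homogeneous dimension $Q=2n+2$, so the resulting $C$ depends only on $\Delta_H$, $M$ and $r_0$, as stated. The Heisenberg-specific input throughout is the availability of the interior/exterior corkscrew and Harnack chain conditions of Definition~\ref{def:NTA}, the subelliptic Harnack inequality and maximum principle, and the Dahlberg comparison of Theorem~\ref{cg-thm21}; these replace their Euclidean counterparts in the Jerison--Kenig and Caffarelli--Fabes--Mortola--Salsa scheme.
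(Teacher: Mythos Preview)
The paper does not contain a proof of this statement: Theorem~\ref{cg-thm22} is quoted verbatim from Capogna--Garofalo~\cite{cg98} (their Theorem~3) as a tool to be used later, with no argument supplied. So there is no ``paper's own proof'' to compare against; what you have written is a sketch of the classical Jerison--Kenig / Caffarelli--Fabes--Mortola--Salsa scheme transported to the subelliptic setting, which is indeed how the result is obtained in~\cite{cg98}.

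That said, your lower bound step is where the real content lies, and your sketch is too elliptic there. The sentence ``running the same scheme for $w$ shows that $w$ cannot decay faster than $g$'' hides the whole difficulty: the upper and lower bounds are not symmetric, because for $g=G(\cdot,A')$ you have an explicit pole and hence a built-in lower bound near $A'$, whereas for a generic nonnegative $w$ you have none. The standard argument (cf.\ \cite[Lemma~4.10]{jk}, adapted in~\cite{cg98}) does not merely ``combine'' the Dahlberg estimate with Harnack chains; it uses the maximum principle to compare $w$ from below with the harmonic measure of a surface ball on which $w$ is bounded away from zero (this is where the Carleson estimate is used again, to locate such a ball), and then invokes Theorem~\ref{cg-thm21} to convert that harmonic-measure lower bound into a Green-function lower bound. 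Without this step made explicit, the argument as written has a genuine gap at the crucial point. The overall strategy and the dependence of constants are correct.
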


\subsection{Carleson measures and related notions in Harmonic analysis}

Recall that a non-empty open connected set $\Om\subset X$ of a metric space $(X,d)$ has \emph{$s$-regular boundary} for some $s>0$, if there exists a constant $C\geq 1$ such that
\begin{displaymath}
\frac{1}{C}\, r^s \leq \mathcal{H}^s(B(x,r)\cap \partial \Omega)\leq C \,r^s,\quad \text{for all }x\in \partial \Omega\text{ and
}0<r<\mathrm{diam}(\partial \Omega).
\end{displaymath}

Next, we recall the definition of the Carleson measure, here formulated in the setting of the Heisenberg group $\Hei$.

\begin{defn}[Carleson measure]\label{def: Carleson-m}
Let $1\leq \alpha<\infty$ and $s>0$.  We say that a positive Borel measure $\mu$ on an open connected set $\Om\subset \Hei$ with non-empty $s$-regular boundary is an \emph{$\alpha$-Carleson measure on $\Om$}, if there exists a
constant $C>0$ such that
\begin{equation}\label{eq:CarlesonCOnst}
\mu(\Om \cap B(x,r))\leq C r^{\alpha s},\quad \text{for all }x\in \partial \Om\text{ and }r>0.
\end{equation}
The $\alpha$-\emph{Carleson measure constant} of $\mu$ is defined
by
\begin{displaymath}
\gamma_{\alpha}(\mu):= \inf\{C>0\text{ such that \eqref{eq:CarlesonCOnst} holds for all $x \in\partial \Om$ and
$r>0$}\}
\end{displaymath}
We also call $1$-Carleson measures simply \emph{Carleson measures}.
\end{defn}

We define two objects that are essential for analyzing the boundary behavior of harmonic functions. 

\begin{defn}\label{def: Nmax}
Let $u:\Om\rightarrow\mathbb{R}$ be a continuous function. We define a nontangential maximal function $N_{\alpha}u:\partial\Om\rightarrow\mathbb{R}$ as follows:
\[
(N_{\alpha}u)(x)=\sup\{|u(y)|:y\in\Gamma_{\alpha}(x)\},
\]
where $\Gamma_{\alpha}(x)=\{y\in\Om: d(y,x)<(1+\alpha)d(y,\partial\Om)\}$ is a cone with vertex $x\in\partial\Om$ and aperture given by $\alpha$.
\end{defn}

In the next definition we assume that the function is $C^1$, but the Sobolev regularity $HW^{1,2}_{loc}$ would suffice as well, cf.~\cite{gmt} for the Euclidean setting. Since the definition below is applied only to harmonic functions on $\Hn$, which are analytic, our regularity assumption is enough.
\begin{defn}\label{def: square}
Let $u:\Om\rightarrow\mathbb{R}$ be a $C^1(\Om)$ function. We define a square function $(S_{\alpha}u)^2:\partial\Om\rightarrow\mathbb{R}$ as follows:
\[
(S_{\alpha}u)^2(x)=\int_{\Gamma_{\alpha}(x)}|\nabla_Hu(y)|^2 d(y,\partial\Om)^{2-Q}\ud y,
\]
where $Q=2n+2$ is a homogeneous dimension of $\Hn$.
\end{defn}

For a given domain $\Om\subset \Hn$ choose a point $y\in \Om$ and consider the harmonic measures $\om^{y}$ on $\Om$. Then for a given $x\in \partial \Om$ and $r>0$ we let $\Delta(x,r):=B(x,r)\cap \partial \Om$ and recall the mean-value of a function $f:\partial \Om\to\R$ on $\Delta(x,r)$:
\[
f_{\Delta(x,r)}:=\vint_{\Delta(x,r)} f(z)\,\ud \om^y(z).
\]

\begin{defn}[Boundary BMO space]\label{defn-bmo-bd}
Let $\Om\subset \Hn$ be a domain. We say that a function $f:\partial \Om \to \R$ belongs to the space $BMO(\partial\Om,\ud\om)$ with respect to the harmonic measure $\om$ in $\Om$, if 
\[
\sup_{\Delta(x,r)}\frac{1}{\om(\Delta(x,r))}\int_{\Delta(x,r)}|f(y)-f_{\Delta(x,r)}|\ud\om<\infty.
\]
\end{defn}
When discussing the NTA domains in $\Hn$ we may omit the reference point in the harmonic measure $\ud\om^z$ and write $\ud\om$ for simplicity, as the following observation asserts that if a function belongs to the space $BMO(\partial\Om,\ud\om^{z_0})$ for $z_0\in\Om$, then it belongs to every space $BMO(\partial\Om,\ud\om^z)$ for all $z\in\Om$. 

\begin{obs} \label{obs-bmo}
%In the definition of space $BMO(\partial\Om,\ud\om)$ we do not use any particular point $z$ to denote $\om=\om^z$.
Let $\Om\subset \Hn$ be a domain. Then, it holds that 
\[
BMO(\partial\Om,\ud\om^z)=BMO(\partial\Om,\ud\om^{z_0}),
\]
for any points $z,z_0\in\Om$.
\end{obs}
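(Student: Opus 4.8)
The plan is to prove the equality of the two $BMO$ spaces by showing that a Harnack-type comparison of harmonic measures with different reference points forces the defining seminorms to be comparable. First I would fix two points $z, z_0 \in \Om$ and recall that, since $\Om$ is connected and the constant function $1$ is harmonic, each harmonic measure $\om^z$ is a probability measure on $\partial\Om$, and that $\om^z$ and $\om^{z_0}$ are mutually absolutely continuous: the Radon--Nikodym derivative $k(z, \cdot) := \ud\om^z / \ud\om^{z_0}$ is the kernel function. The key input is a \emph{global Harnack bound}: for every compact-to-boundary connectedness reason there is a constant $\Lambda = \Lambda(z, z_0, \Om) \geq 1$ such that
\[
\frac{1}{\Lambda} \leq k(z, \cdot) = \frac{\ud\om^z}{\ud\om^{z_0}} \leq \Lambda \quad \text{$\om^{z_0}$-a.e. on }\partial\Om.
\]
This follows because, for a fixed open set $U \ni z, z_0$ with $\overline{U} \subset \Om$, the function $w \mapsto \om^w(E)$ is harmonic in $U$ for each Borel $E \subset \partial\Om$, nonnegative, so the Harnack inequality on the chain of balls joining $z$ to $z_0$ in $\Om$ (which exists since $\Om$ is a domain — note we do not even need the NTA Harnack chain condition here, only ordinary connectedness) gives $\om^z(E) \approx_{\Lambda} \om^{z_0}(E)$ uniformly in $E$; letting $E$ shrink yields the density bound.

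Next I would use this two-sided density bound to transfer both the normalizing denominator $\om(\Delta(x,r))$ and the mean value $f_{\Delta(x,r)}$. For a fixed surface ball $\Delta := \Delta(x,r)$, the averages satisfy
\[
\frac{1}{\Lambda}\,\om^{z_0}(\Delta) \leq \om^z(\Delta) \leq \Lambda\,\om^{z_0}(\Delta),
\]
so the two denominators are comparable. For the mean values, the standard fact that $f_{\Delta}$ is the best constant up to a factor of $2$, i.e.
\[
\inf_{c \in \R} \vint_{\Delta} |f - c|\,\ud\om \leq \vint_{\Delta} |f - f_{\Delta}|\,\ud\om \leq 2 \inf_{c \in \R} \vint_{\Delta} |f - c|\,\ud\om,
\]
lets me replace $f_{\Delta(x,r)}^{\om^z}$ by $f_{\Delta(x,r)}^{\om^{z_0}}$ at the cost of a harmless constant, since the infimum over $c$ is the quantity being compared and the measures differ only by the bounded density. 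Combining the denominator comparison with the density comparison inside the integral then gives, for every surface ball,
\[
\frac{1}{\om^z(\Delta)} \int_{\Delta} |f - f_{\Delta}^{\om^z}|\,\ud\om^z \;\leq\; C(\Lambda)\, \frac{1}{\om^{z_0}(\Delta)} \int_{\Delta} |f - f_{\Delta}^{\om^{z_0}}|\,\ud\om^{z_0},
\]
and taking the supremum over $\Delta$ shows $f \in BMO(\partial\Om, \ud\om^{z_0}) \Rightarrow f \in BMO(\partial\Om, \ud\om^z)$; the reverse inclusion is symmetric, so the spaces coincide.

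The main (and essentially only) obstacle is the global Harnack bound for the harmonic measure densities, so I would state it carefully: it requires knowing that $x \mapsto \om^x(E)$ is a genuine nonnegative harmonic function on $\Om$ and that $\Om$ being a domain suffices to chain Harnack balls between any two interior points with a constant depending only on the two points and $\Om$ (not on $E$). One should be slightly careful that the chain length, hence $\Lambda$, may blow up as $z$ or $z_0$ approaches $\partial\Om$, but this is irrelevant here because $z, z_0$ are \emph{fixed} interior points. Everything else — absolute continuity, the probability-measure normalization, and the $2$-approximation property of the mean — is routine measure theory and harmonic-function theory already invoked implicitly elsewhere in the paper, so no subriemannian-specific difficulty arises beyond the availability of the Harnack inequality for $\Delta_H$, which is recalled in Section 2.3.
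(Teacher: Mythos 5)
Your argument is correct and follows the same overall strategy as the paper: transfer the BMO seminorm from $\om^{z_0}$ to $\om^z$ by a change of measure, using comparability of the two harmonic measures. The difference lies in how the key bound is obtained. The paper introduces the kernel function $K(z,y)=\frac{\ud\om^z}{\ud\om^{z_0}}(y)$ and bounds it by invoking its continuity in $y$ (cited from Capogna--Garofalo, Section 4) together with compactness of $\partial\Om$, and then controls the ratio $\om^{z_0}(\Delta)/\om^{z}(\Delta)$ separately by Harnack; you instead derive the two-sided bound $\Lambda^{-1}\om^{z_0}\leq\om^{z}\leq\Lambda\,\om^{z_0}$ directly by applying the Harnack inequality along a chain of balls to the nonnegative harmonic functions $w\mapsto\om^{w}(E)$, uniformly in the Borel set $E$. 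Your route is more self-contained and arguably better matched to the statement, which is formulated for a general domain $\Om\subset\Hn$, since it does not rely on kernel-function results proved in the NTA setting; note also that the density bound itself is not strictly needed, as the measure-level inequality $\om^{z}(E)\leq\Lambda\,\om^{z_0}(E)$ for all Borel $E$ already yields $\int g\,\ud\om^{z}\leq\Lambda\int g\,\ud\om^{z_0}$ for nonnegative $g$, so your informal ``let $E$ shrink'' step can be bypassed (or made rigorous by testing on $\{k>\Lambda+\ep\}$). You are also more careful than the paper on one point: the paper keeps the same mean $f_{\Delta(x,r)}$ when switching measures, whereas you explicitly use the fact that the mean is a best constant up to a factor $2$, which is the clean way to reconcile the $\om^{z}$-mean with the $\om^{z_0}$-mean; this closes a small gap that the paper's computation leaves implicit.
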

\begin{proof}
Suppose that $f\in BMO(\partial\Om,\ud\om^{z_0})$ and denote by $K(z,y)$ the Radon-Nikodym derivative $\frac{\ud\om^z}{\ud\om^{z_0}}(y)$, see Section 4 in \cite{cg98}. By the second remark in~\cite[Section 4]{cg98}, we know that for a fixed $z$, a function $y\mapsto K(z,y)$ is continuous and hence bounded by a constant $C(z,z_0)$ as $\partial\Om$ is compact. Therefore
\begin{align*}
\sup_{\Delta(x,r)}\frac{1}{\om^{z}(\Delta(x,r))}\int_{\Delta(x,r)}&|f(y)-f_{\Delta(x,r)}|\ud \om^{z}(y)\\
&=\sup_{\Delta(x,r)}\frac{1}{\om^{z}(\Delta(x,r))}\int_{\Delta(x,r)}|f(y)-f_{\Delta(x,r)}|K(z,y)\ud \om^{z_0}(y)\\
&=\sup_{\Delta(x,r)}\frac{\om^{z_0}(\Delta(x,r))}{\om^{z}(\Delta(x,r))}\frac{1}{\om^{z_0}(\Delta(x,r))}\int_{\Delta(x,r)}|f(y)-f_{\Delta(x,r)}|K(z,y)\ud \om^{z_0}(y)\\
&\lesssim_{z,z_0} \sup_{\Delta(x,r)}\frac{C(z,z_0)}{\om^{z_0}(\Delta(x,r))}\int_{\Delta(x,r)}|f(y)-f_{\Delta(x,r)}|\ud \om^{z_0}(y)<\infty
\end{align*}
as a quotient $\frac{\om^{z_0}(\Delta(x,r))}{\om^{z}(\Delta(x,r))}$ is bounded by a constant depending on the Harnack inequality constant, as well as on points $z_0$ and $z$.  Hence $f\in BMO(\partial\Om,\ud\om^{z})$.
\end{proof}

We remark that on good domains, for example on NTA domains, the dependence on $z$ and $z_0$ discussed in the end of the above proof, can be reduced to the dependence on the diameter of the domain and the constants $r_0$ and $M$ in Definition~\ref{def:NTA}.

\section{Characterizations of Carleson measures on ADP-domains}

The purpose of this section is to show Theorem~\ref{thm-main1}, which can be understood as the nonabelian counterpart of the well-known characterization of the Carleson measures in the upper-half plane $\R^2_{+}$, see Lemma 5.5 in~\cite[Section 5, Ch. I]{ga}. However, here we prove it only for bounded domains.

%\begin{theorem}[cf. Lemma 5.5, Chapter I in \cite{ga} ]\label{thm-main1}
% Let $\Om\subset \Hei$ be a smooth $ADP$ domain with $3$-regular boundary and $\mu$ be a positive measure on $\Om$. Then $\mu$ is a Carleson measure on $\Om$ if and only if there exists a constant $C=C(\alpha)$ such that for every harmonic function $u$ on $\Om$ and every $\lambda>0$ it holds that
% \begin{equation}\label{cond:l55}
% \mu(\{x\in \Om: |u(x)|>\lambda\})\leq C \sigma(\{\om\in \partial \Om: N_{\alpha}u(\om)>\lambda\}),
% \end{equation}
% where $\sigma$ is the surface measure on $\partial \Om$, i.e. $\sigma=H^2\lfloor \partial \Om$. Moreover, if $C$ is the least constant such that ~\eqref{cond:l55} holds, then the Carleson constant of $\mu$ satisfies $\gamma_{\mu} \approx_{\alpha} C$.
%\end{theorem}

\begin{remark}
Upon the necessary modifications, Theorem~\ref{thm-main1} can be as well formulated for the smooth ADP domains in $\Hn$ for $n\geq 1$. However, for the sake of the simplicity of the presentation and in order to emphasize the similarity to the corresponding result in~\cite{ga}, we restrict our discussion to $\Hei$ only.
\end{remark}

The proof of the sufficiency part relies on the corresponding one for Proposition 6.3 in~\cite{af} and in fact holds for continuous functions in doubling metric spaces.

In order to show the necessity part of the assertion we adapt the idea of the proof of Lemma 5.5 in~\cite[Section 5, Ch. I]{ga} for the Carleson measures on the upper half plane $\R^2_{+}$ and the Euclidean harmonic functions. There, by choosing the constant boundary data $4\lambda$ with support contained in the interval $I\subset \R$ and by defining the harmonic function $u$ as the convolution of the Poisson kernel in the upper half plane $\R^2_{+}$ with the function $4\lambda \chi_I$, one shows that the superlevel set $\{x\in \R^2_{+}: u(x)>\lambda\}$ contains the square $Q$ with base $I$ and so its measure satisfies: $\mu(Q)\leq \mu(\{x\in \R^2_{+}: u(x)>\lambda\})$. This combined with the weak-$L^1$ estimate for the Hardy--Littlewood maximal function gives the assertion of the theorem. 

Our strategy of the proof relies on the following facts: first, existence of the Poisson kernel on $ADP$ domains allows us to construct the appropriate harmonic function $u$. Then we invoke the harmonic measure representation of $u$ together with the mutual absolute continuity of the harmonic measure with respect to the surface measure. Finally, the subelliptic counterparts of the weak-$L^1$ estimates and the estimates for the non-tangential maximal function allow us to conclude the necessity part of the proof.

\begin{defn}\label{defn:char}
Let $\Om\subset \Hn$ be a domain. We say that a point $x\in \partial \Om$ is characteristic if the tangent space to $\partial \Om$ at $x$ is horizontal. The set of all such points in $\partial \Om$ is denoted by $\Sigma_{\Om}$. 
\end{defn}

%%\komT{Czy umiemy pokazac Thm. 3.2? Patrz komentarz 3.7}
%%The case of an unbounded domain $\Heip$ requires a separate proof, see Theorem~\ref{thm-main2} below. There, we show the existence of the Green function and the related Poisson kernel for $\Heip$ and construct the harmonic function whose superlevel set contains a half-ball. \komT{Let us mention that according to our best knowledge, both the Green function and the corresponding Poisson kernel are new in the literature, see~\cite{grs, cgn}. Nie uzywamy znalezionych wzorow explicite, wiec to zdanie raczej do modyfikacji.}

For the readers convenience we will now recall results from~\cite{cgn} that are essential for the proof of Theorem~\ref{thm-main1}:
\begin{itemize}
\item[(1)] (Theorem 1.1 \cite{cgn}). Let $\Om\subset\mathbb{H}^n$ be a smooth ADP domain. Then, for any $x\in \Om$ the (subelliptic) harmonic measure $\ud\om^{x}$ and the surface measure $\ud \sigma$ are mutually absolutely continuous. Moreover, for every $p >1$ it holds that $L^p(\partial \Om, \ud \sigma)\subset L^1(\partial \Om,\ud\om^{x})$.

\item[(2)] (Theorem 5.5 \cite{cgn}). Let $\Om\subset\mathbb{H}^n$ be an NTA domain. Fix $x_0\in \Om$ and for a given $\phi\in L^1(\partial \Om,\ud\om^{x_0})$  define the following function
\[
 u(x):=\int_{\partial\Om}\phi(y) \ud \om^{x}(y),\quad x\in \Om.
\]
Then $u$ is subelliptic harmonic in $\Om$ and the following estimate holds for the non-tangential maximal function of $u$ and the Hardy--Littlewood maximal operator:
\[
(N_{\alpha}(u))(x)\leq C M_{\om}(\phi)(x),\quad x\in \partial \Om.
\]
\item[(3)] (Theorem 4.9 in~\cite{cgn}). Let $\Om\subset \Hn$ be a smooth domain, then $\sigma(\Sigma_{\Om})=0$.
 \end{itemize}

\begin{proof}[Proof of Theorem~\ref{thm-main1}]
 The sufficiency part of the proof follows from the discussion analogous to the one in the proof of Proposition 6.3 in~\cite{af}. In particular, formula (6.5) in~\cite{af} for $\alpha=1$ and $s=2$ gives assertion~\eqref{cond:l55}. For the sake of completeness of the presentation we now provide some key steps of the reasoning in~\cite{af}. Moreover, for the sufficiency part it is enough that function $u$ in~\eqref{cond:l55} is a continuous function. 
 
 Let $\mu$ be a Carleson measure on $\Om$. We define the following superlevel sets
\begin{displaymath}
E(\lambda):=\{x\in \Omega:\, |u(x)|>\lambda\}\quad\text{and}\quad
U(\lambda):= \{\om \in \partial \Omega:\, N_{\alpha}u(\om)>\lambda\},\quad \lambda >0.
\end{displaymath}
In this notation, assertion~\eqref{cond:l55} reads 
\begin{equation}\label{eq:superlevel}
\mu(E(\lambda)) \leq C \mathcal{H}^2(U(\lambda))\quad\text{for all }\lambda>0.
\end{equation}

As in~\cite{af} we employ the Whitney-type decomposition of $U(\lambda)$ based on the general result \cite[Proposition 4.1.15]{hkst} applied to the metric space $(\partial \Omega, d|_{\partial \Omega})$ and the open set $U(\lambda)$. It allows is to find a countable collection
$\mathcal{W}_{\lambda}=\{B(\omega_i,r_i):\, i=1,2,\ldots\}$ of
balls with $\omega_i \in U(\lambda)$ such that
\begin{equation}\label{eq:Whitney1} U(\lambda) = \bigcup_{i=1,2,\ldots}
B(\omega_i,r_i) \cap \partial \Omega,
\end{equation}
\begin{equation}\label{eq:Whitney2}
\sum_i \chi_{B(\omega_i,2 r_i)\cap \partial \Omega}\leq 2 N^5,
\end{equation}
where $r_i = (1/8) d(\omega_i,\partial \Omega\setminus U(\lambda))$ and $N$ depends only on the $2$-regularity
constant of $\partial \Omega$. 
%To prove the superlevel set estimate, we want to show that $E(\lambda)$ is included in the union of the balls $B(\omega_i,Cr_i)$, for a suitable geometric constant $C=C(\kappa)$. 
Let $x$ be an arbitrary point in $E(\lambda)$, then
\begin{displaymath}
N_{\alpha}u(\omega)>\lambda,\quad \text{for all }\omega\in S(x)=
B\left(x,(1+\alpha)d(x,\partial \Omega)\right)\cap
\partial \Omega,
\end{displaymath}
and hence
\begin{equation*}%\label{eq: S(q)_in_U(lambda)}
S(x) \subset U(\lambda) \overset{\eqref{eq:Whitney1} }{=}
\bigcup_i B(\omega_i,r_i) \cap \partial \Omega,\quad \text{for all
}x\in E(\lambda).
\end{equation*}
Next, for $x\in E(\lambda)$, let $\omega_x \in \partial B$ be such that $d(x,\omega_x)= d(x,\partial \Omega)$, due to compactness of $\partial \Omega$. Thus  $\omega_x\in S(x)$ and, since $S(x) \subset U(\lambda)$, we
moreover know that $d(\omega_x,\partial \Omega \setminus U(\lambda)) \geq d(\omega_x,\partial \Omega \setminus S(x))$.

Furthermore, there exists $i_x\in \{1,2,\,\ldots\}$ such that $\omega_x \in B(\omega_{i_x}, r_{i_x})$. By repeating the reasoning in~\cite{af} we find that
\begin{equation*}%\label{eq:Carleson_4star}
x\in B\left(\omega_{i_x},\left(\tfrac{9}{\alpha}+1\right) r_{i_x} \right),\quad r_{i_x} = \frac{1}{8} d(\omega_{i_x},\partial \Omega\setminus U(\lambda)).
\end{equation*}

%Combining this information, we find that
%\begin{align}\label{eq:triangle}
%r_{i_x} = \frac{1}{8} d(\omega_{i_x},\partial \Omega\setminus
%U(\lambda)) &\geq \frac{1}{8} \left[d(\omega_x,\partial \Omega
%\setminus U(\lambda)) - d(\omega_x,\omega_{i_x}) \right]\notag\\
%&\overset{\eqref{eq:Carleson_star}}{\geq} \frac{1}{8}
%d(\omega_x,\partial \Omega\setminus S(x)) - \frac{1}{8} r_{i_x}.
%\end{align}
%Since $d(\omega_x,x)= d(x,\partial \Omega)$, it is easy to see
%that
%\begin{displaymath}
%B\left(\omega_x,\kappa d(x,\partial \Om)\right)\cap \partial \Omega
%\subset S(x).
%\end{displaymath}
%Hence  the above chain of inequalities implies that
%\begin{displaymath}
%9 \,r_{i_x} \overset{\eqref{eq:triangle}}{\geq}d(\omega_x,
%\partial \Omega \setminus S(x)) \geq d(\omega_x,\partial \Omega\setminus
%B\left(\omega_x,\kappa d(x,\partial \Omega)\right)).
%\end{displaymath}
%As the right-hand side of the above inequality is bounded from
%below by $\kappa d(x,\partial \Omega)=\kappa d(x,\omega_x)$,
Since $x$ was chosen arbitrarily from $E(\lambda)$, we have thus shown that $E(\lambda)$ is covered by the countable family of balls $B(\omega_i,C r_i)$, $i=1,2,\ldots$, where $C=C(\alpha)= \frac{9}{\alpha}+1$. Using the assumption that $\mu$ is a Carleson measure together with the fact that $\mathcal{H}^2|_{\partial \Omega}$ is $2$-regular, and since the multiplicity of the Whitney balls is controlled by \eqref{eq:Whitney2}, we deduce that
\begin{align*}
\mu(E(\lambda)) 
%\leq \mu\left(\bigcup_i B(\omega_i,Cr_i) \cap \Omega\right) 
\leq \sum_i \mu(B(\omega_i, C r_i)\cap \Omega) \leq \gamma_{\mu} \,\left(\tfrac{9}{\alpha}+1\right)^{2}\, \sum_i r_i^{2} 
%&\leq \gamma_{\mu}\, \left(\tfrac{9}{\kappa}+1\right)^{s\alpha}\,\left( \sum_i r_i^s\right)^{\alpha}\\
\lesssim \sum_i \mathcal{H}^2(B(\omega_i,r_i)\cap \partial \Omega)\overset{\eqref{eq:Whitney2}}{\lesssim} \mathcal{H}^2(U(\lambda)),
\end{align*}
as desired. This concludes the proof of the sufficiency part of Theorem~\ref{thm-main1}.
 
Next, let us prove the necessity part of the assertion. First, we need the solvability of the subelliptic harmonic Dirichlet problem for continuous boundary data. Such a result holds for bounded open sets in $\Hei$ satisfying the uniform outer ball condition, see Remark 3.4 in~\cite{lu} and references therein. Therefore, since every gauge ball $B$ satisfies the uniform outer ball condition and, by assumptions, so is $\Om$, it holds that also $\Om\cap B(x_0, 3r)$ satisfies the condition, see Remark 3.5 in~\cite{lu} for convex open sets. However, the set $\Om\cap B(x_0,3r)$ need not have to be convex, but a ball is convex and, hence, satisfies the uniform outer ball condition. Therefore, the intersection of two sets satisfying uniform outer ball condition also satisfies it, as it suffices to take a smaller radius of those defining outer balls for $\Om$ and $B(x_0,3r)$.

By the discussion at (4.1) in~\cite{lu} we define the Poisson kernel $P=P(x,\om)$ related to $\Om$, for $x\in \Om$ and $\om\in \partial \Om\setminus \Sigma_{\Om}$. Recall, that $\sigma(\Sigma_{\Om})=0$ by (3) in our presentation before the proof of Theorem~\ref{thm-main1}. %This in turn leads us to the following discussion.

Let $\phi:\partial\Om\to\R$ be a continuous function such that $\phi\equiv 4\lambda$ on the set $\partial\Om\cap B(x_0,6r)$, $\phi\equiv 0$ outside the set $\partial\Om\cap B(x_0,7r)$ and $0\le\phi\le 4\lambda$. Then a function $u(y)=\int_{\partial D} P(y,\om)\phi(\om) \ud \sigma(\om)$ is the unique harmonic solution to the Dirichlet Problem in $\Om$ for the Poisson kernel $P$ of domain $\Om$ with boundary data given by $\phi$.  Moreover, we adapt the weak maximum principle in Theorem 8.2.19 (ii) in \cite{blu} and obtain the weak minimum principle for $u$, so that $0\le u\le 4\lambda$ in $\Om$. %(by the standard approach of considering the function $\lambda-u$ harmonic in $\Om\cap\ B(x_0,r)$).
Let us consider a function $w:=4\lambda-u$. Such a function is harmonic in $\Om$, satisfies $0\le w \le 4\lambda$ and has zero boundary values on $\partial\Om\cap B(x_0,6r)$. Therefore, we can use Theorem 1.1 from \cite{gp} to obtain 
\begin{equation*}
\frac{w(x)}{w(A_r(x_0))}=\frac{4\lambda-u(x)}{4\lambda-u(A_r(x_0))}\le c\frac{d(x,\partial\Om)}{r}.
\end{equation*}
Then it follows that
\begin{equation*}
u(x)\ge 4\lambda \left(1-c\frac{d(x,\partial\Om)}{r}\right)+cu(A_r(x_0))\frac{d(x,\partial\Om)}{r},
\end{equation*}
for $x\in\Om\cap B(x_0,r)$ and $c=c(n,\Om)$.

If $x\in\Om\cap B(x_0,\widetilde{r})$ with $\widetilde{r}<\frac{3}{4c}r$, then $1-c\frac{d(x,\partial\Om)}{r}>\frac{1}{4}$. In a consequence, we get
\begin{equation*}
u(x)>\lambda+cu(A_r(x_0))\frac{d(x,\partial\Om)}{r}>\lambda.
\end{equation*}
%provided that $x\in\Om\cap B(x_0,\widetilde{r})$.
In particular, $u>\lambda$  on $\Om\cap B(x_0,\frac{1}{2c}r)$ and so it holds for any ball $B(x_0,r)$ that

\begin{equation}\label{est: thm1-1}
 \mu(B(x_0,r)\cap \Om)\lesssim \mu(B(x_0,\frac{1}{2c}r)\cap\Om)\leq \mu(\{x\in\Om: u(x)> \lambda\})\overset{\eqref{cond:l55}}{\leq} C \sigma(\{\om\in \partial \Om: N_{\alpha}u(\om)>\lambda\}).
\end{equation} 

%Theorem 13 in [CG1998] implies that since $u\geq \lambda>0$ in $D$, then there exists a unique Borel measure on $\partial D$ giving the kernel $K$-representation.

Since $\phi\in C(\partial \Om)$, we have that $\phi\in L^p(\partial\Om, \ud \sigma)$ for any $1\leq p\leq \infty$. This holds, as $\sigma(\partial\Om)<\infty$, due to the $3$-regularity of $\partial \Om$ and its boundedness. Hence, Theorem 1.1 in~\cite{cgn} implies that $\phi\in L^p(\partial\Om, \ud \om^{y})$ for any given $y\in D$. Moreover, it holds that $\ud \om^{y}=P(y,\cdot)\ud \sigma$, for a Poisson kernel. Therefore, Theorem 5.5 (i) in \cite{cgn} implies that 
\begin{equation}\label{est: thm1-2}
 \{\om \in \partial\Om: N_{\alpha}u(\om)>\lambda \}\subset \left\{\om \in \partial\Om: M_{\om^y}(\phi)(\om)>\frac{\lambda}{C}\right\},
\end{equation}
where $M_{\om^y}(\phi)$ stands for the Hardy--Littlewood maximal operator of function $\phi\in L^1 (\partial\Om, \ud \om^y)$ defined in the standard way as follows
\[
 M_{\om^y}(\phi)(\om):=\sup_{0<r<\diam\Om} \frac{1}{\om^{y}(\partial\Om\cap B(\om,r))} \int_{\partial\Om\cap B(\om,r)} |\phi(z)| \ud \om^y(z),\quad \om\in\partial\Om.
\]
Next, we appeal to the following relation between the maximal operator considered with respect to the harmonic measure $\om^y$ and the surface measure $\sigma$, see (6.7) in \cite{cgn}:
\begin{equation}\label{est: thm1-3}
  M_{\om^y}(\phi)(\om)\leq C \left(M_{\sigma}|\phi|^{\beta}\right)^{\frac{1}{\beta}}(\om),\quad \om\in \partial\Om,\quad \hbox{any fixed } y\in\Om.
\end{equation}
The estimate holds for any $1<p\leq \infty$ and $1<\beta<p$. Since $\phi \in L^{\infty}(\partial\Om,\ud \om^y)$ we may choose $p=\infty$.

By \cite{cg98}, pg. 14 it holds that $(\partial\Om, \ud \om^{x}, d_{\partial\Om})$ is the homogeneous space. Thus, by collecting estimates in~\eqref{est: thm1-1}-\eqref{est: thm1-3} and by applying the weak-$L^1$ estimate for doubling spaces in Theorem 3.5.6 in [HKST] and by the definition of $\phi$ we obtain the following estimate 
\begin{align}
 \mu(B(x_0,r)\cap\Om)&\leq C\sigma \left(\left\{\om \in \partial\Om: M_{\om^y}(\phi)(\om)>\frac{\lambda}{C}\right\}\right)  \nonumber \\
& \leq C\sigma \left(\left\{\om \in \partial\Om: M_{\sigma}|\phi|^{\beta}(\om)>\left(\frac{\lambda}{C}\right)^{\beta}\right\}\right) \nonumber \\
& \leq \frac{C^\beta}{\lambda^\beta} \|\phi\|^{\beta}_{L^{\beta}(\partial\Om, \ud \sigma)}\lesssim C \sigma (\partial(B(x_0,3r)\cap \Om)) \lesssim r^3. \label{est-Carl-thm1}
\end{align}

Since $y\in\Om$ is arbitrary and any two harmonic measures $\om^{y}$ and $\om^{y'}$ are comparable for any $y,y'\in\Om$ with the constant depending on the diameter $\diam \Om<\infty$. Thus, $\mu$ is Carleson in $\Om$, as the constants in~\eqref{est-Carl-thm1} do not depend on the choice of $r$.
%\komT{Czy mozna skonstruowac $\phi$ od razu na calym $\Om$ i przyjac $D=\Om$?}
%\komT{Czy $\sigma:=H^2\lfloor \partial \Om$ dla $\Om$- ADP jest $3$-regularna? Tak jest dla kuli, patrz Lemma 2.11 w\cite{af}. Tak jest rowniez dla zbiorow $C^1$ i zadanych gladka funkcja, thm 3.1 w [CGN-AJM] oraz [CGN-MemAMS].}\komM{Tu juz doszlismy, ze powinna byc $3$-reularnosc.}
%\komT{Czy nie potrzeba zauwazyc ze $u$ da sie rozszerzyc do harm w $\Om$ z zachowanie danych brzegowych na $\partial D\cap \partial \Om$?}\komM{A dlaczego?}
\end{proof}

%%\begin{theorem}\label{thm-main2}
%% Let $\Heip\subset \Hei$ be the upper-half space, i.e. $\Heip:=\{(x,y,t)\in \Hei:t>0\}$ and $\mu$ be a Carleson measure in $\Heip$ with the Carleson constant $\gamma_{\mu}$. Then, the assertion of Theorem~\ref{thm-main1} holds for $\mu$ with respect to harmonic functions in $\Heip$.
%%\end{theorem}
%%
%%\komT{1. Mozna zbadac inne polprzestrzenie w $\Hei$, a nawet pas (strip) - patrz Garofalo JMMA. 
%%
%%2. Dowod powinien przejsc bez koniecznosci odwolania sie do j. Poissona zbioru nieograniczonego, przez j. Poissona rodziny polkul zbiegajacych do polprzestrzeni i pokazaniu jedn. zbieznosci reprezentacji poprzez jadra Poissona do subharmonicznej funkcji.}

\section{Carleson measures and M\"obius-type transformations on the unit gauge ball}

The purpose of this section is to show Theorem~\ref{thm-main3}, a counterpart of Lemma 3.3 in Chapter VI.3 in~\cite{ga} characterizing the Carleson measures on the unit disk $\mathbb{D}$ in terms of the canonical M\"obius transformations on $\mathbb{D}$. Namely, the lemma says that \emph{a positive measure $\mu$ on $\mathbb{D}$ is a Carleson measure if and only if the following holds:
\begin{equation}\label{cond:plane}
 \sup_{z_0\in \mathbb{D}} \int_{\mathbb{D}} \frac{1-|z_0|^2}{|1-\bar{z_0}z|^2} \ud \mu(z)=M<\infty.
\end{equation}
Moreover, the constant $M$ is comparable to the Carleson constant, i.e. $M\approx \gamma_{\mu}$ with absolute constants.
}
Notice that, for a given $z_0\in \mathbb{D}$, the integrand in~\eqref{cond:plane} satisfies the following
\begin{equation}\label{eq: mob-plane}
\frac{1-|z_0|^2}{|1-\bar{z_0}z|^2}=\frac{1-\left|\frac{z-z_0}{1-\bar{z_0}z}\right|^2}{1-|z|^2}=\frac{1-|T_{z_0}(z)|^2}{1-|z|^2},
\end{equation}
where $T_{z_0}(z)=e^{-i\theta_0}\frac{z-z_0}{1-\bar{z_0}z}$, for $z_0=r_0 e^{i\theta_0}$ is the M\"obius self-mapping of $\mathbb{D}$ with the property $T_{z_0}(z_0)=0$. Such family of conformal mappings, and its $n$-dimensional counterpart, play an important role in the studies of quasiconformal and quasiregular mappings and related Hardy spaces, see e.g. \cite{AK, AG1, AG2} and~\cite{ahl} for basic properties of such mappings. The relation between expressions in \eqref{eq: mob-plane} and the Carleson condition becomes more apparent once we observe that for small enough radii $r>0$, any $\om\in \partial \mathbb{D}$ and $z\in \mathbb{D} \cap B(\om,r)$ it holds that 
\begin{equation}\label{eq:mob-plane2}
\frac{1-|T_{z_0}(z)|^2}{1-|z|^2} \approx \frac{1-|T_{z_0}(z)|}{1-|z|}\approx \frac{1}{r},
\end{equation}
see Lemma 2.2 in~\cite{AG2}. Hence, \eqref{eq: mob-plane} and \eqref{eq:mob-plane2} together with \eqref{cond:plane} imply the Carleson condition for $\mu$:
$$
\mu(\mathbb{D} \cap B(\om,r)) = r \int_{\mathbb{D} \cap B(\om,r)} \frac{1}{r} \ud \mu\lesssim Mr.
$$
The class of the M\"obius transformations $T_{z_0}$ has no direct counterpart in the Heisenberg setting as a class of the conformal maps from a unit Kor\'anyi--Reimann ball into itself. Nevertheless, recently in~\cite[Section 4.1]{af} the notion of class $T_{z_0}$ has been extended to the subriemannian setting in the following way.

Recall the \emph{Kor\'{a}nyi inversion} in the Kor\'{a}nyi unit sphere centered at the origin defined as follows:
$I(y)=-\frac{1}{\|y\|^4}\left(y_z(|y_z|^2+iy_t), y_t\right)$, where $y=(y_z, y_t) \in \mathbb{H}^1 \setminus \{0\}$. It is the restriction of a conformal self-map of the compactification $\widehat{\mathbb{H}}^1$, with $I(0)=\infty$ and $I(\infty)=0$. Moreover, if $y$ lies in the complex plane (i.e. $y_t=0$), the inversion $I$ agrees with the well-known inversion in the unit circle. 

Let us fix $x\in \mathbb{H}^1$, $a\in \mathbb{H}^1\setminus \{x\}$, and $\rho>0$. Define the map
$T:=T_{x,a,\rho}:\hat{\mathbb{H}}^1 \to \hat{\mathbb{H}}^1$ as follows
\begin{equation}\label{def:Tmob}
T(y):= \delta_{\rho}\left(\left[I(a^{-1}\cdot x)\right]^{-1} \cdot \left[I(a^{-1}\cdot y)\right] \right),
\end{equation}
where $\delta_{\rho}$ denotes the Heisenberg dilation by $\rho$.

Below we collect some properties of maps $T_{x,a,\rho}$ proven in Proposition 4.2, Corollaries 4.9 and 4.11 and Proposition 4.13 in~\cite{af}:
\begin{itemize}
\item[(1)] The mapping
$$
T|_{\mathbb{H}^1 \setminus \{a\}}: \mathbb{H}^1 \setminus \{a\} \to \mathbb{H}^1 \setminus \{\delta_{\rho} \left([I(a^{-1}\cdot x)]^{-1}\right)\}
$$ 
is $1$-quasiconformal. Moreover, $T(x)=0$, $T(a)=\infty$, $T(\infty) =\delta_{\rho}\left([I(a^{-1}\cdot x)]^{-1}\right)$.
\item[(2)] For all $y,y'\in \mathbb{H}^1 \setminus \{a\}$, it holds that 
\begin{align*}
d(T(y),T(y')) = \rho \frac{d(y,y')}{d(a,y) d(a,y')},\quad \|T(y)\| = \rho \frac{d(x,y)}{d(a,y) d(a,x)},\quad J_T(y) = \frac{\rho^4}{d(a,y)^8},
\end{align*}
where $J_T$ denotes the Jacobian of map $T$.
\item[(3)] Let any $x\in B$, $a\in \mathbb{H}^1 \setminus \overline{B}$ and $\rho>0$, be such that
\begin{equation}\label{eq:ImplicitConstants}
\rho \lesssim \min\{d(x,\partial B), d(a,\partial B) \} \quad\text{and} \quad \rho \approx d(a,x).
\end{equation}
Then the map $T=T_{x,a,\rho}$ satisfies
\begin{equation}\label{incl:balls}
B(0,m) \subset T(B) \subset B(0,M).
\end{equation}
for radii $m$ and $M$ depending on $x$, $a$, and $\rho$ only through the implicit multiplicative constants
in the inequalities in \eqref{eq:ImplicitConstants}. 
This property is a reflection of the similar one for the M\"obius self-maps of a unit ball in $\Rn$, see e.g. Lemma 2.2 in~\cite{AG2}

\item[(4)] Let $\omega \in \partial B$, $x\in B$, and $\rho>0$. Assume that $a\in \mathbb{H}^1 \setminus \overline{B}$
and $r>0$  are such that $d(a,\omega) \lesssim r$ and $d(a,B)\geq C r$, for a constant $C>1$. Then, the map $T= T_{x,a,\rho}$ satisfies
\begin{equation}\label{eq:ComparabilityJ}
\frac{d(T(y),\partial T (B))}{d(y,\partial B)} \approx_C \frac{\rho}{d(y,a)^2},\quad \text{for all }y\in B(\omega,r)\cap B.
\end{equation}
The similar property holds in $\Rn$, see Lemma 2.2 in \cite{AG2}.
\end{itemize}

After the above preparatory observations we are in a position to prove the main result of this section. For the readers convenience we recall the statement of Theorem~\ref{thm-main3} (cf. Section 1).
\smallskip
%\begin{theorem}[cf. Lemma 3.3 in \cite{ga}]\label{thm-main3}
\\
 {\bf Theorem~\ref{thm-main3}.}  \emph{A measure $\mu$ on the unit gauge ball $B:=B(0,1)\subset \Hei$ is a Carleson measure if and only if
 \begin{equation}\label{cond: thm-m3}
  \int_{B} \left( \frac{d(T_{x,a,\rho}(y),\partial T_{x,a,\rho} (B))}{d(y,\partial B)}\right)^3 \ud \mu(y)=M<\infty,
 \end{equation}
 for all $x\in B$, $a\in \mathbb{H}^1 \setminus \overline{B}$, and $\rho>0$ such that 
$\rho \lesssim \min\{d(x,\partial B), d(a,\partial B) \}$ and $\rho \approx d(a,x)$.
}
%\end{theorem}
\smallskip
\\
Basing on~\eqref{eq:mob-plane2}, one could expect that the corresponding hypotheses~\eqref{cond: thm-m3} of the above theorem in $\Hei$ should involve the Kor\'anyi norms of points in $y\in B$ and their images $T(y)$. Indeed, by property~\eqref{incl:balls} we have that
 \[
\left(\frac{m+1}{2}\right) \frac{1-\|T_{x,a,\rho}(y)\|}{1-\|y\|} \leq \frac{1-\|T_{x,a,\rho}(y)\|^2}{1-\|y\|^2}\leq (M+1) \frac{1-\|T_{x,a,\rho}(y)\|}{1-\|y\|}.
 \]
However, due to the geometry of balls in $\Hei$ it is more convenient to work with the distances to the corresponding boundaries of $y$ and $T(y)$, see the proof below.

\begin{proof}[Proof of Theorem~\ref{thm-main3}]
Set $T:=T_{x,a,\rho}$ and observe that by~\eqref{eq:ComparabilityJ} and by assumptions the following holds for any $y\in B(\omega,r)\cap B$:
 \begin{equation}\label{est2-thm-m3}
  \frac{d(T(y),\partial T (B))}{d(y,\partial B)} \approx_C \frac{\rho}{d(y,a)^2}\approx \frac{d(a,x)}{d(y,a)^2}.
 \end{equation}
 Choose $x$ such that $x\in B(\om,r)\cap B$ and $d(\om, x)=\frac{r}{2}$. Then, for $a$ choosen as in~\eqref{eq:ComparabilityJ}, it holds that
 \begin{align*}
  r&\leq d(a,y) \leq d(a,x)+d(x,y) \approx d(x,\partial B)+\frac32 r\approx \frac52 r,\\
  &\,\,\,\,\,\,\,d(a,x)\approx d(x,\partial B)\approx d(x,\om)\approx r.
 \end{align*}
Hence, by applying these estimates in~\eqref{est2-thm-m3} we obtain that
 \begin{equation}\label{est-thm-m3}
  \frac{d(T(y),\partial T (B))}{d(y,\partial B)} \approx \frac{1}{r} \approx \frac{1}{d(x,\partial B)}.
 \end{equation}
% \komT{W powyzszej nierownosci wystarczy $\lesssim$.}
 
 We are ready to show the necessity part of the assertion. Let us assume that \eqref{cond: thm-m3} holds. Then, for any $\om\in \partial B$ and $r>0$ we have
 \begin{align*}
  \mu(B(\om, r)\cap B) &=\int_{B(\om, r)\cap B}\frac{d(x,\partial B)^3}{d(x,\partial B)^3} \ud \mu(y)  \\
  &\approx d(x,\partial B)^3 \int_{B(\om, r)\cap B}\left(\frac{d(T(y),\partial T (B))}{d(y,\partial B)}\right)^3\ud \mu(y) \\
  &\lesssim M d(x,\partial B)^3 \approx r^3.
 \end{align*}
 In order to show the opposite implication in the assertion of the theorem let us consider two cases for points $x\in B$ in the definition of maps $T=T_{x,a,\rho}$ : (1) $d(x,\partial B)>\frac14$, and (2) $d(x,\partial B)\leq \frac14$. In the first case by~\eqref{est-thm-m3}, we trivially have that
 \[
  \int_{B(\om, r)\cap B}\left(\frac{d(T(y),\partial T (B))}{d(y,\partial B)}\right)^3\ud \mu(y)\lesssim_C \mu(B) \leq C\gamma_{\mu}<\infty.
 \]
Therefore, we may assume that points $x\in B$ satisfy $d(x,\partial B)\leq \frac14$ in which case we mimic the approach in the proof of Lemma 3.3 in~\cite[Section 3, Chapter VI]{ga}. However, we need to take into account the differences between the Euclidean and the Heisenberg settings. 

Recall that the Euclidean radial curves need not be horizontal in $\Hei$ and hence may have an infinite subriemannian length.
However, by works \cite{kr} and \cite{bt}, see also the discussion in Section 2.1.2 in~\cite{af}, we have that the following formula describes the radial curves given by the horizontal curves joining the origin with the point $\om=(z,t)$ belonging to the boundary $\partial B\setminus \{z=0\}$: 
\begin{equation}\label{def:rad-curv}
\gamma(s,(z,t)) = \left(sz e^{-\mathrm{i}\frac{t}{|z|^2}\log s},s^2 t\right),\quad (z,t)\in \partial B\setminus \{z=0\}.
\end{equation}
It is easy to compute that $\|\gamma(s)\|=s$. Moreover, given $x\in B$ we may find a point $\om\in \partial B$ corresponding to $x=(x_z, x_t)$, in a sense that $x=\gamma(s, \om)$ for some $0<s<1$, by solving \eqref{def:rad-curv} for $z$ and $t$. Namely, we have that 
\[
 t=\frac{x_t}{\|x\|^2},\quad z= \frac{x_z}{\|x\|}\,e^{\mathrm{i}\frac{x_t}{|x_z|^2}\,\log \|x\|}. 
\]
We denote such point by $\om_x$ and define the following family of subsets in $B$:
\[
 E_n:=\{y\in B: d(y, \om_x)<2^n d(x,\partial B)\}\quad n=1,2,\ldots.
\]
Therefore, since $\mu$ is assumed to be a Carleson measure in $B$, we find that
\[
 \mu(E_n)\leq\mu\big(B(\om_x, 2^n d(x,\partial B)) \cap B\big)\leq \gamma_{\mu}2^{3n} d(x,\partial B)^3,\quad  n=1,2,\ldots.
\]
Hence, by appealing to~\eqref{est2-thm-m3}, we obtain the following estimate
\begin{align}
 &\int_{B(\om, r)\cap B}\left(\frac{d(T(y),\partial T (B))}{d(y,\partial B)}\right)^3\ud \mu(y) \nonumber \\
 &\,\,\,\,\leq \int_{E_1}\left(\frac{d(T(y),\partial T (B))}{d(y,\partial B)}\right)^3\ud \mu(y)+\sum_{n=2}^{\infty}\int_{E_n\setminus E_{n-1}}\left(\frac{d(T(y),\partial T (B))}{d(y,\partial B)}\right)^3\ud \mu(y). \label{est3-thm-m3}
% &\,\,\,\,\leq \frac{\mu(E_1)}{2^{3} d(x,\partial B)^3}+\sum_{n=1}^{\infty}\mu(E_n)\left(\frac{d(a,x)}{d(y,a)^2}\right)^3  \leq \sum_{n=1}^{\infty}\mu(E_n)\left(\frac{d(a,x)}{d(y,a)^2}\right)^3.
\end{align}
Since, by assumption $d(a,x)\approx d(x,\partial B)$ and for $y\in E_n\setminus E_{n-1}$ it holds that 
$$
2^{n-1}d(x,\partial B)<d(y,\om_x)<2^nd(x,\partial B),
$$
we have that
\[
\frac{d(a,x)}{d(y,a)^2}\lesssim \frac{d(x,\partial B)}{d(y,\om_x)^2}  \lesssim \frac{1}{2^{2n} d(x,\partial B)},
\]
as $d(y,a)>d(y, \om_x)$ due to the assumption that $a\in \Hei\setminus \overline{B}$. We are in a position to complete the above estimate~\eqref{est3-thm-m3} as follows (cf.~\eqref{est2-thm-m3}):
\begin{align*}
 &\int_{E_1}\left(\frac{d(T(y),\partial T (B))}{d(y,\partial B)}\right)^3\ud \mu(y)+\sum_{n=2}^{\infty}\int_{E_n\setminus E_{n-1}}\left(\frac{d(T(y),\partial T (B))}{d(y,\partial B)}\right)^3\ud \mu(y)\\
 & \leq \frac{\mu(E_1)}{2^{6} d(x,\partial B)^3} + \sum_{n=2}^{\infty}\int_{E_n\setminus E_{n-1}}\frac{1}{2^{6n}d(x,\partial B)^3} \ud \mu(y)\\
 & \lesssim \sum_{n=1}^{\infty} \frac{1}{2^{6n}d(x,\partial B)^3} \mu(E_n) \lesssim_{\gamma_{\mu}} \sum_{n=1}^{\infty} \frac{1}{2^{3n}}<\infty.
\end{align*}
This completes the sufficiency part of the proof and thus, the whole proof is completed as well.
\end{proof}

\begin{remark}
 We observe that since the Kor\'{a}nyi inversion can be defined in groups $\Hn$, see e.g.~\cite{kr2}, so is the class of maps $T_{x,a,\rho}$. Moreover, the horizontal curves~\eqref{def:rad-curv} exist not only in $\Hei$, but also in $\Hn$ (in fact, in polarizable groups, see Section 3 in~\cite{bt}). Therefore, Theorem~\ref{thm-main3} has a counterpart in $\Hn$. However, for the sake of simplicity of the presentation and in order to avoid repeating similar construction of maps $T_{x,a,\rho}$ presented in~\cite[Section 4.1]{af}, we decided to state the theorem only in the $\Hei$ setting. 
\end{remark}

\section{Square function and Carleson measures for $L^2$ and BMO boundary data}

The purpose of this section is to prove main results of this work, namely Theorems~\ref{thm-jk91} and~\ref{thm-jk96}, which generalize, respectively, Theorem 9.1 and Theorem 9.6 in~\cite{jk}. Theorem 9.1 provides the upper bound for the $L^2$-norm of the square function in terms of the $L^2$-norm of the boundary data on NTA domains. Theorem 9.6 gives a Carleson-measure estimate for a subelliptic harmonic function defined by the integral of a BMO function with respect to harmonic measure on the NTA domain. 
%\komM{Mozna napisac, ze jest to potrzebne do $\varepsilon$-aproksymowalnosci, ale moze lepiej nie zdradzac, ze chcemy to wykorzystac do kolejnego dowodu.}
Such estimates in $\Rn$ are essential, for example, when proving the so-called $\varepsilon$-approximability for harmonic functions, i.e. the existence of a BV function $v$ such that for a harmonic function $u$ we have $\|u-v\|_{\infty}<\varepsilon$ and $|\nabla v(y)|\ud y$ is a Carleson measure, see e.g.~\cite{bh, gmt, ht} . 

Recall Definition~\ref{def: square} of the square function (also known in the literature as the area function, depending on the authors and the context): 
\[
S_{\alpha}u(x)^2:=\int_{\Gamma_{\alpha}(x)}|\nabla_H u(y)|^2 d(y,\partial \Om)^{2-Q}\ud y.
\]
For the readers convenience we recall our main results (cf. Section 1). 
\smallskip
%\begin{theorem}[$L^2$-boundedness of the square function]\label{thm-jk91}
\\
 {\bf Theorem~\ref{thm-jk91}.} \emph {Let $\Om\subset \Hn$ be a bounded NTA domain. Let further $f\in L^2(\ud \om)$ and $u(x):=\int_{\partial \Om} f(y)\ud \om^x(y)$. Then, the following estimate holds for the square function $S_{\alpha}$ of a harmonic function $u$ in $\Om$
 \[
  \|S_{\alpha}u\|_{L^2(\ud \om)}\leq C\|f\|_{L^2(\ud \om)}.
 \]
 }
%\end{theorem}
The corresponding Euclidean result in~\cite{jk}, cf. Theorem 9.1, is proven for any $1<p<\infty$. However, for us the case $p=2$ is the most interesting, as it is the one that we would like to use to prove $\varepsilon$-approximability for harmonic functions. According to our best knowledge, the result is new in the subriemannian setting. 
%\komT{zmienic, bo tekst jest we wstepie:
%Our next result is the subriemannian counterpart of the Euclidean result, i.e. Theorem  9.6 in~\cite{jk}. Moreover, it also generalizes Theorem 3.4 in~\cite{ga} for the unit disc in the plane, see Remark~\ref{rem-Garnett-34}. We further refer to Example~\ref{ex-ball} for the case of the unit gauge ball in $\Hn$, where the Green function in the assertion of Theorem~\ref{thm-jk96} can be explicitly estimated form below in terms of the distance function, thus providing more classical and handy estimate.
%}
\smallskip
\\
{\bf Theorem~\ref{thm-jk96}.} \emph{ Let $\Om\subset \Hn$ be a bounded NTA domain and $u$ be a subelliptic harmonic in $\Om$ such that $u(x)=\int_{\partial \Om} f(y)\ud \om^x(y)$ for some $f\in BMO(\partial \Om)$. Then, for any ball $B(x_0, r)$ centered at $x_0\in \partial \Om\setminus \Sigma_{\Om}$ with any $0<r<r_0\leq \min\{1, \frac{d(x_0,\Sigma_{\Om})}{M}\}$ it holds that
 \[
 \int_{B(x_0,r)\cap \Om} |\nabla_H u|^2 G(x, A_r(x_0))\ud x\leq C\om(B(x_0,r)\cap \partial \Om),
 \]
 where constant $C$ depends on $n, M, r_0$ and $\|f\|_{BMO(\partial \Om)}$. 
 }
 %\komT{Czy mozna sie uwolnic od $A_r(x_0)$ i zastapic dowolnym ustalonym punktem $y\in \Om$ poza kazda kula $B(x_0, r)$?} 
% \end{theorem}

Let us remark that the assertion of the theorem can be formulated equivalently in a way similar to the bottom of page 3 in~\cite{ht}, i.e. by using the supremum over radii and the averaged integral.

In the theorem, $\om$ stands for a harmonic measure with respect to any but fixed point $y\in \Om$ and so $\om:=\om^y$. However, for the sake of convenience of the presentation in what follows we omit the reference points. This is justified by Observation~\ref{obs-bmo} and by following the standard notation convention for harmonic measures, see e.g.~\cite{jk}. 

%Since the harmonic measure on the boundary of an NTA domain is $Q\!-\!1$-Ahlfors regular for $Q=2n+2$, the following consequence the above theorem holds. \komT{czy dlatego czy trzeba zalozyc ADP? sprawdzic}\komM{ To nie jest prawda bez zalozenia, ze dziedzina ma regularny brzeg. Istnieja dziedziny, ktore sa NTA, a nie sa chord arc. Ale samo to nie pomaga, bo a priori nie wiemy, czy mozemy porownac miare harmoniczna z Tw. 5.2 z miara powierzchniowa, ktora wystepuje w definicji regularnosci. Dlatego powinnismy zalozyc, ze dziedzina jest ADP i gladka, by miec porownywalnosc obu miar.}

Upon strengthening the regularity assumptions of the boundary, the following consequence of Theorem~\ref{thm-jk96} holds, as the ADP condition allows us to compare the harmonic measure with the surface measure, see (1) in the discussion following Definition~\ref{defn:char}.

\begin{cor} Under the assumptions of Theorem~\ref{thm-jk96} if we additionally assume that $\Om$ is a smooth ADP domain, then it holds for any ball $B(x_0, r)$ centered at $x_0\in \partial \Om\setminus \Sigma_{\Om}$ with radius $0<r<r_0\leq \min\{1, \frac{d(x_0,\Sigma_{\Om})}{M}\}$ that
 \[
 \int_{B(x_0,r)\cap \Om} |\nabla_H u|^2 G(x, A_r(x_0))\ud x\leq C r^{Q-1},
 \]
where $C$ depends on $n, M, r_0$ and $\|f\|_{BMO(\partial \Om)}$. 
\end{cor}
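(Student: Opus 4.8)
The plan is to derive the Corollary directly from Theorem~\ref{thm-jk96} by replacing the harmonic measure on the right-hand side with a power of the radius, using the ADP hypothesis. The key point is that on a smooth ADP domain, item (1) in the list following Definition~\ref{defn:char} (Theorem 1.1 in~\cite{cgn}) gives mutual absolute continuity of the harmonic measure $\om^y$ and the surface measure $\sigma = \mathcal{H}^{Q-1}\lfloor\partial\Om$. Combined with the $3$-regularity-type bound on $\sigma$ — more precisely, the fact that on an ADP domain in $\Hn$ the boundary is $(Q-1)$-regular, so that $\sigma(B(x_0,r)\cap\partial\Om)\lesssim r^{Q-1}$ — this should yield $\om(B(x_0,r)\cap\partial\Om)\lesssim r^{Q-1}$ for $r$ below the relevant threshold.

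First I would fix $y\in\Om$ as the reference point for the harmonic measure, as in the statement of Theorem~\ref{thm-jk96}, and recall that $\om = \om^y$. Then I would invoke Theorem~\ref{thm-jk96} verbatim to get
\[
\int_{B(x_0,r)\cap \Om} |\nabla_H u|^2 G(x, A_r(x_0))\ud x\leq C\om(B(x_0,r)\cap \partial \Om),
\]
with $C = C(n,M,r_0,\|f\|_{BMO(\partial\Om)})$. The only remaining task is to bound $\om(B(x_0,r)\cap\partial\Om)$ by $C r^{Q-1}$. For this I would use the Dahlberg-type estimate (Theorem~\ref{cg-thm21}): choosing a point $x\in\Om$ at fixed comparable distance (say $x = A_{2ar}(x_0)$ or simply $x$ with $d(x,\partial\Om)\approx r$ and $d(x,x_0)\approx r$), one gets $\om^x(\Delta(x_0,r))\approx \frac{|B_d(x,r)|}{r^2}G(x,A_r(x_0))\approx r^{Q-2}G(x,A_r(x_0))$ by $Q$-regularity of the Lebesgue measure. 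Alternatively — and more cleanly — since $\Om$ is a smooth ADP domain, the surface ball has $\sigma(\Delta(x_0,r))\approx r^{Q-1}$ by $(Q-1)$-regularity of $\partial\Om$ (this regularity for smooth domains in $\Hn$ is standard, cf. the discussion of characteristic points in Section 3), and the mutual absolute continuity together with the reverse Hölder / $A_\infty$ property of the harmonic measure on ADP domains (again Theorem 1.1 in~\cite{cgn} and the homogeneous-space structure of $(\partial\Om,\om^x,d)$ noted on page 14 of~\cite{cg98}) gives $\om^y(\Delta(x_0,r))\lesssim \sigma(\Delta(x_0,r))\lesssim r^{Q-1}$ for all $r<r_0$, with the implicit constant depending on $\diam\Om$, $M$, $r_0$. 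Finally I would note that passing from $\om^x$ to the fixed $\om^y$ changes the estimate only by a Harnack-type multiplicative constant depending on $y$, $M$, $r_0$, $\diam\Om$, which can be absorbed into $C$. Combining, $\om(B(x_0,r)\cap\partial\Om)\lesssim r^{Q-1}$, and substituting into the conclusion of Theorem~\ref{thm-jk96} gives the claimed inequality with $C$ depending on $n$, $M$, $r_0$ and $\|f\|_{BMO(\partial\Om)}$.

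The main obstacle, if any, is bookkeeping of the constants: one must check that the passage $\om^y(\Delta(x_0,r))\lesssim r^{Q-1}$ holds uniformly in $r\in(0,r_0)$ with a constant independent of $x_0$, which is exactly what the combination of $(Q-1)$-Ahlfors regularity of $\partial\Om$ (valid for smooth, hence ADP, domains) and the absolute continuity statement provides; there is no new analytic difficulty beyond what is already packaged into Theorems~\ref{cg-thm21} and the cited results from~\cite{cgn,cg98}. I would therefore keep the proof short: cite Theorem~\ref{thm-jk96}, cite the ADP absolute-continuity/regularity facts to replace $\om(B(x_0,r)\cap\partial\Om)$ by $r^{Q-1}$, and conclude.
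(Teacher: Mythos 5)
Your proposal is correct and follows essentially the same route as the paper, which justifies the corollary in one line: apply Theorem~\ref{thm-jk96} and then use the ADP hypothesis (the comparison of harmonic and surface measure from Theorem 1.1 of \cite{cgn}, together with the $(Q-1)$-regularity of the surface measure on the portion of the boundary away from the characteristic set) to bound $\om(B(x_0,r)\cap \partial \Om)\lesssim r^{Q-1}$. One small warning: your first alternative --- the Dahlberg-type estimate with a pole $x$ at distance $\approx r$ from $x_0$, followed by a Harnack change of pole to the fixed reference point $y$ --- would not yield a constant independent of $r$, since the Harnack chain joining such an $x$ to $y$ has length growing like $\log(1/r)$; the surface-measure comparison you give afterwards is the argument to keep, and it is the one the paper uses.
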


An important class of examples of NTA domains in $\Hn$ is the one of (Euclidean) $C^{1,1}$ domains, see~\cite{cg98, gp}. Since the Green function $G(x,\cdot)$ is nonnegative subelliptic harmonic in $\Om\setminus \{A_r(x_0)\}$ we may use Theorem 1.2 in~\cite{gp} to get the following lower boundary Harnack-type estimate for a (Euclidean) $C^{1,1}$ domain $\Om\subset \Hn$, any $x_0\in \partial \Om\setminus \Sigma_{\Om}$ and $0<r'<\min\{1, d(x_0,\Sigma_{\Om})/M, d(A_r(x_0), \partial \Om)\}$ and for all $x\in B(x_0,r')\cap \Om$:
\[
 G(x,A_r(x_0))\geq  C(n,\Om) G(A_{r'}(x_0),A_r(x_0)) \frac{d(x,\partial \Om)}{r'}\gtrsim_{C(n,\Om)}  G(A_{r'}(x_0),A_r(x_0)) d(x,\partial \Om).
\]
Moreover, notice that by building a chain of balls joining $A_{r'}(x_0)$ with fixed, but any $y\in \Om$ such that $\dist(y,\partial \Om)>r_0$, for $r_0$ as in the definition of NTA domains, and by the standard iteration of the Harnack inequality on metric balls (see e.g.~\cite[Corollary 5.7.3]{blu}), we obtain that
\[
 G(A_{r'}(x_0),A_r(x_0)) \geq C^{N} G(y, A_r(x_0)),
\] 
where the length of the Harnack chain $N$ depends on $\diam \Om$ and $r$ and the Harnack constant C depends on the geometric parameters of $\Hn$ and $\Delta_H$, cf.~\cite{blu}.

The above discussion and Proposition 5.6 in~\cite{gp} imply the following Carleson-type estimate.
\begin{cor}\label{cor-52}  Let $\Om\subset \Hn$ be a (Euclidean) $C^{1,1}$ domain which also satisfies the uniform outer ball condition. Then, under the assumptions of Theorem~\ref{thm-jk96}, it holds for any ball $B(x_0, r)$ centered at $x_0\in \partial \Om\setminus \Sigma_{\Om}$ with radius $0<r<r_0\leq \min\{1, \frac{d(x_0,\Sigma_{\Om})}{M}\}$ that
 \begin{equation}\label{est-Carleson-G}
 \int_{B(x_0,r)\cap \Om} |\nabla_H u|^2 G(A_{r'}(x_0),A_r(x_0)) d(x,\partial \Om) \ud x\leq C \om(B(x_0,r)\cap \partial \Om),
 \end{equation}
 where $C$ depends on $n, M, r_0, \diam \Om$ and $\|f\|_{BMO(\partial \Om)}$m and the radius $r'$ satisfies 
 $$
 0<r'<\min\{1, d(x_0,\Sigma_{\Om})/M, d(A_r(x_0), \partial \Om)\}.
 $$
 \end{cor}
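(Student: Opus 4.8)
The plan is to combine the Carleson estimate of Theorem~\ref{thm-jk96} with the pointwise lower bound for the Green function displayed immediately before the statement of the corollary. First, I would observe that the hypotheses of Corollary~\ref{cor-52} — namely that $\Om$ is a (Euclidean) $C^{1,1}$ domain satisfying the uniform outer ball condition — place us precisely in the setting where the cited results of~\cite{gp} and~\cite{cg98} apply: such $\Om$ is an NTA domain in $\Hn$, so Theorem~\ref{thm-jk96} is available, and Theorem~1.2 in~\cite{gp} (the lower boundary Harnack estimate) applies to the nonnegative subelliptic harmonic function $G(\cdot, A_r(x_0))$ on $\Om\setminus\{A_r(x_0)\}$, which vanishes continuously on $\partial\Om$. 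Thus, for any $x_0\in\partial\Om\setminus\Sigma_\Om$, any admissible $r$, and any $x\in B(x_0,r')\cap\Om$ with $r'$ in the stated range, we have the chain of inequalities already recorded in the text:
\[
G(x,A_r(x_0)) \;\geq\; C(n,\Om)\, G(A_{r'}(x_0),A_r(x_0))\, \frac{d(x,\partial\Om)}{r'} \;\gtrsim_{C(n,\Om)}\; G(A_{r'}(x_0),A_r(x_0))\, d(x,\partial\Om),
\]
where in the last step the factor $1/r'$ is absorbed into the implicit constant (since $r'<1$, or equivalently the constant is allowed to depend on $r_0$ and $\diam\Om$).

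Next I would insert this bound into the integral appearing on the left-hand side of~\eqref{est-Carleson-G}. Since $|\nabla_H u|^2 \geq 0$ and $G(A_{r'}(x_0),A_r(x_0)) d(x,\partial\Om)$ is, up to the fixed multiplicative constant, a lower bound for $G(x,A_r(x_0))$ on $B(x_0,r)\cap\Om$ — here one uses that $r\le r'$ may be arranged, or more simply that the lower Harnack estimate is applied on the smaller ball $B(x_0,r')\cap\Om \supset B(x_0,r)\cap\Om$ so the pointwise bound is valid throughout the domain of integration — we obtain
\[
\int_{B(x_0,r)\cap\Om} |\nabla_H u|^2\, G(A_{r'}(x_0),A_r(x_0))\, d(x,\partial\Om)\,\ud x \;\lesssim\; \int_{B(x_0,r)\cap\Om} |\nabla_H u|^2\, G(x,A_r(x_0))\,\ud x.
\]
Applying Theorem~\ref{thm-jk96} to the right-hand side gives the bound $C\,\om(B(x_0,r)\cap\partial\Om)$, with $C$ depending on $n, M, r_0$ and $\|f\|_{BMO(\partial\Om)}$; tracking the extra constant from the Green-function comparison and from the Harnack-chain argument adds a dependence on $\diam\Om$ as well. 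This is exactly the assertion~\eqref{est-Carleson-G}. The invocation of Proposition~5.6 in~\cite{gp} is what guarantees the nondegeneracy of the interior Green-function value, i.e.\ that $G(A_{r'}(x_0),A_r(x_0))>0$, so that the estimate is non-vacuous and $r'$ can indeed be chosen in the indicated range.

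The only genuine point requiring care — and the step I expect to be the main (mild) obstacle — is the bookkeeping of the three interlocking scales $r$, $r'$, and the corkscrew point $A_r(x_0)$: one must check that the constraint $0<r'<\min\{1, d(x_0,\Sigma_\Om)/M, d(A_r(x_0),\partial\Om)\}$ is compatible with $B(x_0,r)\cap\Om$ lying inside the region where the lower boundary Harnack inequality of Theorem~1.2 in~\cite{gp} holds, and that the corkscrew geometry (so that $A_r(x_0)$ stays at comparable distance $\approx r/M$ from both $x_0$ and $\partial\Om$) is used correctly when comparing $d(A_r(x_0),\partial\Om)$ with $r$. Once these compatibility conditions are verified — which is routine given the NTA structure and Definition~\ref{def:NTA} — the corollary follows immediately by the substitution described above. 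No further estimates beyond those already established in~\cite{gp} and in Theorem~\ref{thm-jk96} are needed.
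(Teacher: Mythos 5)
Your overall route is the same as the paper's: Corollary~\ref{cor-52} is obtained there precisely by feeding the lower boundary Harnack estimate for $G(\cdot,A_r(x_0))$ from Theorem~1.2 of~\cite{gp} (together with the Harnack-chain comparison and the reference to Proposition~5.6 of~\cite{gp} in the discussion preceding the corollary) into the Carleson bound of Theorem~\ref{thm-jk96}, so the skeleton of your argument matches the intended proof.

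However, the one point you yourself single out as the main obstacle is resolved incorrectly. You claim that ``$r\le r'$ may be arranged'', equivalently that $B(x_0,r')\cap\Om\supset B(x_0,r)\cap\Om$. This is impossible: the admissible range for $r'$ requires $r'<d(A_r(x_0),\partial\Om)$, and by the interior corkscrew condition $d(A_r(x_0),\partial\Om)\le d(A_r(x_0),x_0)\le r$, so necessarily $r'<r$ and the inclusion goes the other way. Consequently the pointwise bound $G(A_{r'}(x_0),A_r(x_0))\,d(x,\partial\Om)\lesssim G(x,A_r(x_0))$, which Theorem~1.2 of~\cite{gp} yields for $x\in B(x_0,r')\cap\Om$, is not automatically available on the whole domain of integration $B(x_0,r)\cap\Om$, and your substitution step is unjustified as written. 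The paper's own exposition is terse at this point too, but it does not assert the false inclusion; covering the remaining region $(B(x_0,r)\setminus B(x_0,r'))\cap\Om$ needs an extra argument --- for instance applying the boundary Harnack estimate at boundary points of $\Delta(x_0,r)$ at scale comparable to $r'$ and then comparing the resulting corkscrew values of $G$ by Harnack chains, which is what the chaining discussion and the citation of Proposition~5.6 in~\cite{gp} before the corollary are meant to supply. You should either carry out that bridging step explicitly or restrict the integral to $B(x_0,r')\cap\Om$; as it stands, the claimed reduction to Theorem~\ref{thm-jk96} has a gap.
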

 
% \komT{Nie jestem pewien wartosci takiego sformulowania. Lepiej jest kiedy funkcja Greena pozostaje pod calka. G zanika do zera na $\partial \Om$, wiec takie sformulowanie mowi, ze gradient przemnazamy przez wyrazenie, ktore na brzegu jest zero:
% \[
% \int_{B(x_0,r)\cap \Om} |\nabla_H u|^2 G(A_{r'}(x_0),A_r(x_0)) d(x,\partial \Om) \ud x\leq C r^{Q-1}.
% \]
% }

Let us observe some further consequences of Theorem~\ref{thm-jk96}. First, we explain how it corresponds to Garnett's result, cf. ~\cite[Theorem 3.4]{ga}. Then, in Example~\ref{ex-ball} we show how for the gauge unit ball, a special but important case of the NTA domain in $\Hn$, the estimate in the theorem takes simpler and convenient form.

\begin{remark}\label{rem-Garnett-34}
Theorem~\ref{thm-jk96} generalizes part of the following characterization of the Carleson measures on the unit disc in the plane to the setting of NTA domains in $\Hn$, cf. Theorem 3.4 in~\cite{ga}:
\vspace{0.1cm}

\emph{Let $\phi\in L^1(\mathbb{S}^1)$ and $u$ be a Poisson extension of $\phi$ to the unit disc in $\R^2$. Let
\[
 \ud \lambda_{\phi}:=|\nabla \phi(z)|^2 \ln \frac{1}{|z|}\,\ud x\ud y.
\]
Then $\phi \in BMO(\mathbb{S}^1)$ if and only if $\lambda_{\phi}$ is a Carleson measure. Moreover, the Carleson constant of $\lambda_{\phi}$ is comparable to $\|\phi\|^2_{BMO}$.
}
\vspace{0.1cm}

Recall that, up to the constant $\frac{1}{2\pi}$, the function $\ln \frac{1}{|z|}$ is the Green function of the planar unit disc with a pole at $0$; also that Euclidean balls are NTA domains. Moreover, the harmonic measure $\om \approx \sigma$, where $\sigma$ stands for the surface measure on $\mathbb{S}^1$, see Ex. 3, Ch. I in~\cite{ga}. Therefore, the sufficiency part of the above theorem corresponds in $\R^2$ to the assertion of Theorem~\ref{thm-jk96}.

\end{remark}

The next consequence of Theorem~\ref{thm-jk96} addresses the fact that for some NTA domains in $\Hn$ the Green functions can be found explicitly and so the Carleson condition in Theorem~\ref{thm-jk91} can be refined.

\begin{ex}\label{ex-ball}
Let $\Om=B(0, 1)$ be a unit gauge ball in $\Hn$. By Corollary 4 in~\cite{cg98} such balls are NTA domains. Below, we show that on $\Om$ it is possible to refine the estimate~\eqref{est-Carleson-G} for $G(A_{r'}(x_0),A_r(x_0))$ and obtain the following more natural Carleson estimate.

Recall that for the unit gauge ball in $\Hn$ the set of characteristic points $\Sigma_{\Om}$ consists of the north and south poles.
\smallskip
\\
 \emph{Fix $\delta \in (0,1)$ and let $u$ be as in Theorem~\ref{thm-jk96}, i.e. a harmonic function on $\Om$ with the boundary data in BMO. Then, for all points $x_0=(z,t)\in \partial B\setminus \{z: |z|\leq \delta\}$ and all radii $0<r<r_0\leq \min\{1, \frac{d(x_0,\Sigma_{\Om})}{M}\}$ it holds 
 \begin{equation}\label{est-Carl-true}
 \int_{B(x_0,r)\cap \Om} |\nabla_H u|^2 d(x,\partial \Om) \ud x\leq C r^{Q-1}.
 \end{equation}
 Here, the constant $C$ is as in Corollary~\ref{cor-52} and, additionally, depends on $\delta$.
 }
 \smallskip
 \\
\noindent In order to show this estimate, we appeal to the horizontal curves joining the origin with the point $x_0=(z,t)$ in the boundary $\partial B\setminus \{z=0\}$, see~\eqref{def:rad-curv}. The proofs of Lemmas A.2 and A.4 in~\cite{af} show the following properties of curves $\gamma_{x_0}$:
\begin{align*}
& d(\gamma_{x_0}(s), \gamma_{x_0}(s'))\leq {\rm length}(\gamma_{x_0}(\cdot)|_{[s,s']})=\frac{s'-s}{|z|},\quad 0<s<s'\leq 1, \\
& d(\gamma_{x_0}(s), \partial B)\gtrsim \frac{1-s}{|z|},\, \hbox{ if}\quad 1-s\leq |z|. 
\end{align*}

These properties allow us to choose $s$ such that $1-s=r|z|$ and obtain a point on $\gamma_{x_0}$ which satisfies the definition of a corkscrew point $A_r(x_0)$ in the interior corkscrew condition in Definition \ref{def:NTA}. Choose $r'$ close enough to $r$ (i.e. $|r-r'|\ll 1$) and the corresponding 
$s'$ with $1-s'=r'|z|$. Therefore, we get that
\[
 d(A_r(x_0), A_{r'}(x_0))=d(\gamma_{x_0}(s), \gamma_{x_0}(s'))\leq \frac{|s'-s|}{|z|}\leq \frac12 r' \lesssim_{M} \frac12  d(A_{r'}(x_0), \partial B),
\]
where $M$ stands for the NTA constant of a gauge unit ball in $\Hn$. In a consequence, we may estimate function $G$ from below as follows:
\[
G(A_{r'}(x_0),A_r(x_0))\gtrsim \frac{1}{d(A_r(x_0), A_{r'}(x_0))^{Q-2}}\gtrsim \left(\frac{|z|}{|s-s'|}\right)^{Q-2} \gtrsim |z|^{Q-2},
\]
and the proof of the first inequality follows by repeating the steps of the corresponding proof of Property (1.9) in Theorem 1.1 in~\cite{gw}, see Proposition A.1 in Appendix. From this, the estimate~\eqref{est-Carl-true} follows immediately, by applying Corollary~\ref{cor-52} upon noticing that under our assumptions on $x_0$, it holds that $|z|^{2-Q}$ remains bounded from the above by $\delta^{2-Q}$.

%\gamma(s,(z,t)) = \left(sz e^{-\mathrm{i}\frac{t}{|z|^2}\log s},s^2 t\right),\quad (z,t)\in \partial B\setminus \{z=0\}.
%For a given $x_0\in \partial \Om$ and small enough radius $r$, consider the Green function $G(x,A_r(x_0))$ of $\Om$. Then, we may check that
%\[
% G(x,A_r(x_0))=c(Q)\left(\frac{1}{\|A_r(x_0)^{-1}\cdot x\|^{Q-2}}-h^{A_r(x_0)}(x)\right)\geq 
% c(Q)\left(\frac{1}{\|A_r(x_0)^{-1}\cdot x\|^{Q-2}}-\frac{(cM)^{Q-2}}{r^{Q-2}}\right),
%\]
%by the maximum principle applied to the harmonic correction term $h^{A_r(x_0)}$. Since $\|A_r(x_0)^{-1}\cdot x\|\leq d(A_r(x_0), x)$, by restricting further the radius of a ball in the Carleson condition $r' \approx \frac{r}{5cM}$ we get that $G(x, A_r(x_0))\gtrsim d(x,\partial \Om)$ and the measure $|\nabla_H u(x)|^2 d(x, \partial \Om) \ud x$ is the Carleson measure on $\Om$.
\end{ex}

%Similar discussion gives us also the corresponding assertion on half-spaces in $\Hn$. \komT{Czy mam racje?}\komM{Kiedys o tym rozmawialismy. Odpowiedz jest, ze raczej nie, bo nie ma dobrych wspolrzednych zadanych przez rodzine krzywych, tak jak w przypadku kuli.}

\subsection{Proof of Theorem~\ref{thm-jk91}}

Before proving Theorem~\ref{thm-jk91} we need a counterpart of Theorem 5.14 in~\cite{jk} in $\Hn$. Here we present it in a weaker form, i.e. only one implication, cf.~\cite{jk}.

\begin{prop}\label{prop-thm514}
 Let $\Om \subset \Hn$ be an NTA domain. Let $f \in L^2 (\partial \Om, \ud \om^{z})$ for some $z\in \Om$ and such that $\int_{\partial \Om} f\ud \om^z=0$. Then a function $u(x):=\int_{\partial \Om} f(y)\ud \om^x(y)$ satisfies the following identity
 \begin{equation}\label{ident-prop513}
  \int_{\Om} |\nabla_H u|^2 G(x,z)\ud x=\frac12 \int_{\partial \Om} f(y)^2\ud \om^z(y) <\infty.
 \end{equation}
\end{prop}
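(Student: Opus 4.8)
The plan is to follow the classical Green's identity argument, adapting it to the subriemannian setting where the sub-Laplacian $\Delta_H$ replaces the Euclidean Laplacian and $G(\cdot,z)$ is the Green function for $\Delta_H$ on $\Om$. The starting observation is the pointwise identity $\Delta_H(u^2) = 2u\Delta_H u + 2|\nabla_H u|^2 = 2|\nabla_H u|^2$, valid since $u$ is subelliptic harmonic (and smooth, by analyticity recalled in Section 2.3). Thus $\tfrac12\Delta_H(u^2) = |\nabla_H u|^2$, and the left side of \eqref{ident-prop513} formally equals $\tfrac12\int_\Om \Delta_H(u^2)\, G(x,z)\,\ud x$. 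Integrating by parts twice (Green's second identity for $\Delta_H$ on $\Om$), and using that $G(\cdot,z)$ is a fundamental solution with pole at $z$ — so that $\Delta_H G(\cdot,z) = -\delta_z$ in the distributional sense and $G$ vanishes continuously on $\partial\Om$ — the interior term collapses to $\tfrac12 u(z)^2 - (\text{boundary terms})$, while the boundary terms should combine to $\tfrac12\int_{\partial\Om} f^2\,\ud\om^z$. Since $\int_{\partial\Om} f\,\ud\om^z = 0$ we have $u(z) = \int f\,\ud\om^z = 0$, so the pole contribution vanishes and what remains is exactly the right-hand side of \eqref{ident-prop513}.

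First I would make this rigorous by an exhaustion argument: fix a smooth increasing sequence of relatively compact open sets $\Om_j \Subset \Om$ with $\bigcup_j \Om_j = \Om$ and $z \in \Om_1$, chosen so that $G(\cdot,z)$ is smooth and bounded on a neighborhood of $\partial\Om_j$ (away from the pole). On each $\Om_j$, away from $z$, both $u^2$ and $G(\cdot,z)$ are smooth, so Green's identity applies cleanly:
\begin{equation*}
\int_{\Om_j} \big( G\,\Delta_H(u^2) - u^2\,\Delta_H G \big)\,\ud x = \int_{\partial\Om_j} \big( G\,\langle \nabla_H(u^2),\nu\rangle - u^2\,\langle\nabla_H G,\nu\rangle\big)\,\ud\mathcal{H},
\end{equation*}
where the boundary integral is the appropriate flux for the sub-Laplacian. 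Handling the pole: excise a small gauge ball $B(z,\epsilon)$, let $\epsilon \to 0$, and use the fundamental solution asymptotics of $G$ near $z$ to extract the term $u(z)^2$ — which is $0$. On the outer boundary $\partial\Om_j$, as $j\to\infty$, the term $\int_{\partial\Om_j} G\,\langle\nabla_H(u^2),\nu\rangle$ tends to zero because $G(\cdot,z) \to 0$ on $\partial\Om$, and the term $-\int_{\partial\Om_j} u^2\,\langle\nabla_H G,\nu\rangle$ converges to $\int_{\partial\Om} f^2\,\ud\om^z$: indeed, the harmonic measure $\om^z$ is precisely the measure representing $\langle \nabla_H G(\cdot,z),\nu\rangle\,\ud\mathcal{H}$ on the boundary, i.e. for harmonic $h$ one has $h(z) = \int_{\partial\Om} h\,\ud\om^z = -\int_{\partial\Om} h\,\langle\nabla_H G,\nu\rangle\,\ud\mathcal{H}$, and $u^2$ restricted near $\partial\Om$ has nontangential boundary values $f^2$ (with appropriate $L^1(\ud\om^z)$ control, since $f \in L^2(\ud\om^z)$ and $u$ has the nontangential maximal function estimate from Theorem 5.5 of~\cite{cgn}). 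The finiteness assertion $<\infty$ is then immediate from the right-hand side being $\tfrac12\|f\|_{L^2(\ud\om^z)}^2$.

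The main obstacle I expect is the justification of the limiting boundary integrals on NTA domains, which need not be smooth: the "surface measure" and the flux $\langle\nabla_H G,\nu\rangle\,\ud\mathcal{H}$ on $\partial\Om_j$ are only well-behaved on the approximating smooth domains, and passing to the limit requires a careful argument that the boundary flux of $G(\cdot,z)$ over $\partial\Om_j$ converges weakly to the harmonic measure $\om^z$. This is where one genuinely uses the potential theory of~\cite{cg98}: the characterization of $\om^z$ via the Green function, the Dahlberg-type estimate (Theorem~\ref{cg-thm21}) relating $\om^x(\Delta(x_0,r))$ to $|B_d(x,r)|\,r^{-2}\,G(x,A_r(x_0))$, and the nontangential maximal function bound to control the convergence of $u^2$ on $\partial\Om_j$ uniformly enough to pass to the limit. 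One should also verify that the integrability hypothesis propagates correctly — that $|\nabla_H u|^2 G(\cdot,z) \in L^1(\Om)$ — which follows a posteriori once the identity is established by monotone convergence over the exhaustion $\Om_j$, since each $\int_{\Om_j}|\nabla_H u|^2 G(x,z)\,\ud x$ is bounded by $\tfrac12\|f\|_{L^2(\ud\om^z)}^2$. Throughout, the equivalence of $d_{CC}$ and $d_{\Hn}$ and the $Q$-Ahlfors regularity let one move freely between metric and gauge balls as in the rest of the paper.
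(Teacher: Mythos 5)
Your formal skeleton is the right one ($\Delta_H(u^2)=2|\nabla_H u|^2$, $u(z)=0$ kills the pole term, the boundary flux of $G(\cdot,z)$ should produce $\om^z$), but as written the two limiting steps on $\partial\Om_j$ are asserted rather than proved, and they are precisely where the proof lives on an NTA domain. First, the claim that $\int_{\partial\Om_j} G\,\langle\nabla_H(u^2),\nu\rangle\,\ud\mathcal{H}\to 0$ ``because $G\to 0$ on $\partial\Om$'' does not follow: $|\nabla_H(u^2)|=2|u||\nabla_H u|$ blows up like $d(x,\partial\Om)^{-1}$ times maximal-function data near the boundary, the surface measures of the approximating boundaries are not uniformly controlled on a general NTA domain (harmonic measure and surface measure need not even be comparable there), and the natural way to absorb this term (exhausting by level sets of $G$ and bounding by $\int_\Om G|\nabla_H u|^2$) is circular, since that integral is what you are trying to estimate. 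Second, on $\partial\Om_j$ you are integrating $u^2$ against the flux measure of $G_\Om$ restricted to the smooth subdomain, not against the harmonic measure of $\Om_j$; the statement that these flux measures converge weakly to $\om^z$, with enough uniform integrability of $u^2$ (beyond the nontangential maximal function bound of Theorem 5.5 in \cite{cgn}) to conclude $\int_{\partial\Om_j}u^2(\ldots)\to\int_{\partial\Om}f^2\,\ud\om^z$, is a genuine theorem that needs proof and is not supplied by the Dahlberg-type estimate alone.

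The paper avoids all of this by a different and much shorter route: it invokes the Riesz representation theorem for subharmonic functions in $\Hn$ (Theorem 9.4.7 in \cite{blu}). One sets $v=u^2$, which is subharmonic with $\Delta_H v=2|\nabla_H u|^2\geq 0$; the theorem says $\int_\Om G(\cdot,z)\Delta_H v<\infty$ exactly when $v$ admits a harmonic majorant, in which case $v=h-\int_\Om G(\cdot,y)\Delta_H v(y)\,\ud y$ with $h$ the least harmonic majorant. Evaluating at $z$, where $v(z)=u(z)^2=0$ by the hypothesis $\int_{\partial\Om}f\,\ud\om^z=0$, gives $2\int_\Om G(z,y)|\nabla_H u|^2\,\ud y=h(z)$, and $h(z)=\int_{\partial\Om}f^2\,\ud\om^z$ after identifying $h$ with the harmonic extension of $f^2$ — done first for continuous boundary data and then by approximating $f$ in $L^2(\partial\Om,\ud\om^z)$, exactly as in the proof of Theorem 5.14 in \cite{jk}. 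In other words, the boundary-limit difficulties you flag are packaged into the harmonic-majorant machinery; if you want to keep your direct Green's-identity argument, you would essentially have to reprove that machinery, so you should either supply the two missing limit arguments or switch to the representation-theorem route.
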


\begin{proof}
The proof closely follows the corresponding one in \cite{jk} and, therefore, we discuss only the main steps. The key tool used in \cite{jk} is the Riesz representation theorem for subharmonic functions in $\Rn$, whose subriemannian counterpart is given by Theorem 9.4.7 in~\cite{blu}, applied to $-u$ in the notation of \cite{blu}; see also Definitions 9.4.1 and 9.3.1 in \cite{blu}. Namely, the following holds.

\emph{Let $v$ be a subharmonic function in a domain $\Om\subset \Hn$. Then $\int_{\Om} G(x,z)\Delta_H u(x) \ud x<\infty$ for some $z\in \Om$ if and only if  $v$ has a harmonic majorant. Moreover, if $h$ denotes the least harmonic majorant of $v$, then it holds
\begin{equation}\label{prop54-aux1}
 v(x)=h(x)-\int_{\Om} G(x,y)\Delta_H v(y)\ud y.
\end{equation}
}
For the proof of Proposition~\ref{prop-thm514} one defines a subharmonic function $v=u^2$, as $\Delta_H v=2|\nabla_H u|^2\geq 0$ and applies the above representation theorem. Moreover, $v(z)=u^2(z)=0$ by assumptions of the proposition. Since Green's function is zero at the boundary of $\Om$ we have, by~\eqref{prop54-aux1}, that $h\equiv f^2$ on $\partial \Om$, and so $h(z)=\int_{\partial \Om} f(y)^2\ud \om^z(y)$. Upon collecting these observations we obtain~\eqref{ident-prop513}. Then, as in~\cite{jk}, we assume that $f\in L^2(\partial \Om, \ud \om^z)$ with $\int_{\partial \Om} f\ud \om^z=0$ and approximate $f$ in the $L^2(\partial \Om, \ud \om^z)$-norm by the sequence of continuous functions, see the proof of Theorem 5.14 in~\cite{jk} for the remaining details.
\end{proof}

\begin{proof}[Proof of Theorem~\ref{thm-jk91}]
 Since a function in $L^2(\partial \Om, \ud \om)$ can be approximated by $C(\partial \Om)$-functions we may assume that $f\in C(\partial \Om)$. Moreover, without the loss of generality, we may also assume that $f>0$, as otherwise we split $f$ into a positive and negative parts and consider two cases separately. 
 
In what follows we will employ the Green function $G$ of domain $\Om$ and so, in order to avoid problems with the pole of $G$ we need to bring on stage the truncated square function, i.e. the operator $S_{\alpha}u(x)$ considered with respect to truncated cones $\Gamma_{\alpha}^h(x):=\Gamma_{\alpha}(x)\cap B_d(x, h)$, for $x\in \partial \Om$ and $h$ small enough, so that the pole of $G$ at $z\in \Om$ does not belong to any of such truncated cones. Thus, we split $S_{\alpha} u(x)$ as follows 
\begin{equation}\label{S-split}
S_{\alpha}u(x)^2=\int_{\Gamma_{\alpha}^h(x)}\,+\,\int_{\Gamma_{\alpha}(x)\setminus \Gamma_{\alpha}^h(x)}.
\end{equation}
The second integral can be handled by the gradient estimates for harmonic function $u$ (see~\cite[Proposition 2.1]{lu}) and by the Harnack inequality as follows:
\begin{align}
 \int_{\Gamma_{\alpha}(x)\setminus \Gamma_{\alpha}^h(x)}|\nabla_H u(y)|^2d(y,\partial \Om)^{2-Q}\ud y & \leq
  c(n) \int_{\Gamma_{\alpha}(x)\setminus \Gamma_{\alpha}^h(x)} \frac{1}{d(y, \partial \Om)^2} \Big(\sup_{B_d(y,\frac14 d(y, \partial \Om))} |u| \Big)^2\,d(y,\partial \Om)^{2-Q}\,\ud y \nonumber \\
  & \lesssim_{n, M, C} u^2(z)\int_{\Gamma_{\alpha}(x)\setminus \Gamma_{\alpha}^h(x)} \frac{1}{d(y, \partial \Om)^Q} \,\ud y \label{est-hi-thm61}\\
  & \lesssim_{n, M, C} u^2(z) \bigg(\frac{h}{1+\alpha}\bigg)^{-Q} |\Om|. \nonumber
\end{align}

The Harnack inequality (see e.g.~\cite[Corollary 5.7.3]{blu})) is used in the second estimate: we choose big enough compact subset of $\Om$ containing points $z$ and $y$. Since they both are enough far away from the boundary, there exists a Harnack chain of finite length, depending on $M$, joining $z$ and $y$. We iterate the Harnack estimate along that chain and obtain~\eqref{est-hi-thm61} with constant $C$ coming from the constants in the Harnack inequality.

Therefore, since by assumptions $u(z)=\int_{\partial\Om}f(y)\ud \om^{z}(y)$, we get the estimate
\[
\int_{\Gamma_{\alpha}(x)\setminus \Gamma_{\alpha}^h(x)}|\nabla_H u(y)|^2d(y,\partial \Om)^{2-Q}\ud y \lesssim_{n, M, C} \bigg(\frac{h}{1+\alpha}\bigg)^{-Q} |\Om| \|f\|_{L^2(\om^z)}^2.
\]
In order to get the $L^2(\partial \Om)$-norm estimate we integrate both sides of the above inequality and use the fact that harmonic measure is a probability measure to obtain
\begin{align}
\int_{\partial \Om}\int_{\Gamma_{\alpha}(x)\setminus \Gamma_{\alpha}^h(x)}|\nabla_H u(y)|^2d(y,\partial \Om)^{2-Q}\ud y \ud \om^z &\lesssim_{n, M, C} \bigg(\frac{h}{1+\alpha}\bigg)^{-Q} |\Om| \|f\|_{L^2(\om^z)}^2 \nonumber \\
&\lesssim_{n, M, C, \alpha, \diam \Om, h}\|f\|_{L^2(\om^z)}. \label{est-non-trunc}
\end{align}

We now proceed to estimate the first integral in~\eqref{S-split}. For any point $y\in \Om$ let us denote by $q_y$ a point at which the distance $d(y,\partial\Om)$ is attained. Next, observe that a point $y\in \Gamma_{\alpha}(x)$ if and only if $x\in S(y)$ the shadow of point $y$, defined as $S(y):= \partial \Om\cap B(y, (1+\alpha) d(y,\partial \Om))$. For any $z\in S(y)$ it holds that 
\[
d(x,z)\leq d(x,y)+d(y,z)\leq 2(1+\beta)d(y,\partial \Om).
\]
Therefore, $\left\{x\in \partial \Om: y\in \Gamma_{\alpha}^h(x)\right\}\subset \Delta(q_y, 2(1+\alpha)d(y,\partial \Om))$.

We set $\Om_h=\{y\in\Om: d(y,\partial\Om)<h\}$. An application of the Fubini theorem together with the Dahlberg-type estimate in Theorem~\ref{cg-thm21}, give us that
\begin{align}
& \int_{\partial \Om}\int_{\Gamma_{\alpha}^h(x)}|\nabla_H u(y)|^2d(y,\partial \Om)^{2-Q}\ud y \ud \om^z(x) \nonumber \\
&\phantom{AAA} = \int_{\Om_h} |\nabla_H u(y)|^2 d(y, \partial \Om)^{2-Q} \om^z\left(\left\{x\in \partial \Om: y\in \Gamma_{\alpha}^h(x)\right\}\right) \ud y \nonumber \\
&\phantom{AAA} \leq \int_{\Om_h} |\nabla_H u(y)|^2 d(y, \partial \Om)^{2-Q} \om^z \left(\Delta(q_y, 2(1+\alpha)d(y,\partial \Om))\right)\,\ud y \label{thm13-aux1}\\
&\phantom{AAA} \leq \int_{\Om_h} |\nabla_H u(y)|^2 G(y, z)\,\ud y.\label{thm13-aux2}
\end{align}
The last inequality follows by standard reasoning which, however, deserves some details.

Let $z\in\Om\setminus B_d(q_y, 2M(1+\alpha)d(y,\partial\Om))$. Then by Theorem~\ref{cg-thm21}  we have (see also Section 5 in \cite{cgn} to see why constant $a$ in~\cite[Theorem 1]{cg98} can be taken equal to $M$):
\begin{align}
\om^z\big(\Delta(q_y,2(1+\alpha)d(y,\partial\Om))\big) &\approx\frac{|B_d(z,2(1+\alpha)d(y,\partial\Om))|}{(2(1+\alpha)d(y,\partial\Om))^2}G(z,A_{2(1+\alpha)d(y,\partial\Om)}(q_y)) \nonumber \\
&\approx_{\alpha} d(y,\partial\Om)^{Q-2}G(z,A_{2(1+\alpha)d(y,\partial\Om)}(q_y)). \label{thm13-aux3}
\end{align}
By taking $h$ small enough we ensure that $2(1+\alpha)d(y,\partial\Om)<r_0$, and so Theorem~\ref{cg-thm21} can be applied. Notice that since $A_{2(1+\alpha)d(y,\partial\Om)}(q_y)$ is a corkscrew point we know that
$$
d\left(A_{2(1+\alpha)d(y,\partial\Om)}(q_y),\partial\Om\right)\ge \frac{2(1+\alpha)d(y,\partial\Om)}{M}.
$$
Moreover, $d(A_{2(1+\alpha)d(y,\partial\Om)}(q_y),y)\le  4(1+\alpha)d(y,\partial\Om)$, as both $y$ and $A_{2(1+\alpha)d(y,\partial\Om)}(q_y)$ lie in a ball $B_d(q_y, 2(1+\alpha)d(y,\partial\Om))$. 

Set $\varepsilon:=\min\{{d(y,\partial\Om),\frac{2(1+\alpha)d(y,\partial\Om)}{M}}\}\approx d(y,\partial\Om)$. Then $d(A_{2(1+\alpha)d(y,\partial\Om)}(q_y),y)\le C\varepsilon$ with constant $C$ depending only on $\alpha$ and $M$ and independent of $y$. Therefore, there is a Harnack chain joining $y$ and $A_{2(1+\alpha)d(y,\partial\Om)}(q_y)$ with length independent of $y$ and hence by the Harnack inequality
\begin{equation}\label{thm13-aux4}
G(z,A_{2(1+\alpha)d(y,\partial\Om)}(q_y))\approx_{\alpha, M, C}G(z,y).
\end{equation}
Thus, by combining \eqref{thm13-aux3} and~\eqref{thm13-aux4} and applying them at~\eqref{thm13-aux1}, we obtain~\eqref{thm13-aux2}, as desired.
\smallskip

\noindent Finally, we apply \eqref{ident-prop513} to arrive at
\[
\int_{\partial \Om}\int_{\Gamma_{\alpha}^h(x)}|\nabla_H u(y)|^2d(y,\partial \Om)^{2-Q}\ud y \ud \om^z(x) 
\leq\frac12 \int_{\partial \Om} |f(y)-u(z)|^2\ud \om^z(y) \leq 2 \int_{\partial \Om} |f(y)|^2\ud \om^z(y).
\] 
Adding up together this estimate and~\eqref{est-non-trunc} we obtain the assertion of the theorem.
\end{proof}

\subsection{Proof of Theorem~\ref{thm-jk96}}% and its consequences

%\begin{proof}[Proof of Theorem~\ref{thm-jk96}]
 The structure of the proof follows the corresponding one for the proof of Theorem 9.6 in~\cite{jk}. However, the subriemannian setting of $\Hn$ requires applying different tools.
 
{\sc Claim 1.} \emph{ Let $x_0\in \partial \Om$, $r<r_0$ and $A_r(x_0)\in \Om$ be an internal corkscrew point as in Definition~\ref{def:NTA}(1) such that it satisfies $\frac{r}{M} <d(A_r(x_0), x_0)<Mr$. Then it holds that
\begin{equation}\label{claim1-thm96}
v(A_r(x_0)):=\int_{\partial \Om\setminus \Delta_1} |f(x)-f_{\Delta_1}|\ud \om^{A_r(x_0)}(x) \leq C \|f\|_{BMO(\partial \Om),}
\end{equation}
where $C$ is independent of $r$ and $f$ and $\Delta_1:=B(x_0,2r)\cap \partial \Om$ the surface ball.} 

%\komT{Czy tak chcemy zdefiniowac kule na brzegu? Ta definicja okresla rowniez definicje $BMO(\partial \Om)$, zatem jest istotna, cf. Def. 8.4 w \cite{jk}.}

In order to the prove the claim, we repeat the reasoning from the proof of Lemma 9.7 in~\cite{jk}, see also the proof of Lemma on pg. 35 in~\cite{fn}.

 Fix $y\in \Om$ and consider the harmonic measure $\om^{y}$.  Define $\Delta_j:=B(x_0,2^jr)\cap \partial \Om$ and the related ring domains $R_j:=\Delta_j\setminus \Delta_{j-1}$ for $j=1,2,\ldots$. Recall the notation $f_{\Delta_j}:=\vint_{\Delta_j} f\ud \om^y$.
 
 By virtue of Theorem 11 and Proposition 6 in~\cite{cg98} we have that for kernel functions $K$ it holds 
 \begin{align*}
  v(A_r(x_0)) &\leq \sum_{j\geq 2} \int_{R_j}|f(x)-f_{\Delta_j}|\ud\om^{A_r(x_0)}(x)+\sum_{j\geq 2} \int_{R_j}|f_{\Delta_1}-f_{\Delta_j}|\ud\om^{A_r(x_0)}(x)\\
  & \leq \sum_{j\geq 2} \int_{R_j}|f(x)-f_{\Delta_j}|K(A_r(x_0), x)\ud\om^{y}(x)+\sum_{j\geq 2} |f_{\Delta_1}-f_{\Delta_j}|\int_{R_j}K(A_r(x_0), x)\ud\om^{y}(x)\\
  &\leq \sum_{j\geq 2} \frac{C2^{-\kappa j}}{\om^{y}(\Delta_j)}\int_{\Delta_j}|f(x)-f_{\Delta_j}|\ud\om^{y}(x)+\Big(\sup_{k\geq 1} |f_{\Delta_k}-f_{\Delta_{k-1}}|\Big) \sum_{j\geq 2} jC2^{-\kappa j}\frac{\om^{y}(R_j)}{\om^{y}(\Delta_j)}.
 \end{align*}
Note that here $\kappa>0$ denotes the constant $\alpha>0$ in Proposition 6 in~\cite{cg98}.
Finally, the standard argument involving mean value integrals gives us that
\begin{align*}
 \left|f_{\Delta_k}-f_{\Delta_{k-1}}\right| &= \frac{\om^y(\Delta_k)}{\om^y(\Delta_{k-1})} \left(\frac{1}{\om^y(\Delta_k)}\int_{\Delta_k}|f(x) -f_{\Delta_{k}}|\ud\om^y(x)\right) \\
 &\leq C\left(\frac{1}{\om^y(\Delta_k)}\int_{\Delta_k}|f(x) -f_{\Delta_{k}}|\ud\om^y(x)\right).
\end{align*}
Here we also appeal to the doubling property of $\om^y$, see Theorem 2 in~\cite{cg98} which is proven for $k$ small enough so that radii $2^kr<r_0$. In order to obtain this property for large $k$ we use the Harnack inequality and Corollary 3 in~\cite{cg98}, cf. the discussion following Lemma 4.9 in \cite{jk} and the proof of Lemma 4.2 therein.

% \komT{Argumentacja w \cite{jk} nie jest dla mnie do konca jasna: czy aproksymuja kawalkami stala funkcje przez funkcje ciagle? Ponadto, Cor. 3 w\cite{cg98} jest troche slabsze niz lemma 2 w \cite{jk}. }\komM{Nie rozumiem tej uwagi, porozmawiamy o niej, jak sie spotkamy.}
%\komT{Tu nie jest dla mnie jasne co jer-kenig maja na mysli w definicji 8.4 brzegowego BMO: ustalaja $y$, czy jednak mysla o tej przestrzeni jako rownowaznej z $BMO(\ud \sigma)$ dla miary powierzchniowej?}

By applying the definition of the seminorm in $BMO(\partial \Om)$, cf. Definition~\ref{defn-bmo-bd}, we obtain 
\[
 v(A_r(x_0)) \leq \|f\|_{BMO(\partial \Om)} \Big(C+C\sum_{j\geq 2}j2^{-\kappa j}\Big)\leq C \|f\|_{BMO(\partial \Om)}
\]
and Claim 1 is proven.

{\sc Claim 2.} \emph{Let $x_0\in \partial \Om$ and $A_r(x_0)\in \Om$ be an internal corkscrew point as in Definition~\ref{def:NTA}(1). Denote by $\Delta(x_0,r):=B(x_0,r)\cap \partial \Om$ the surface ball. Then it holds that
\begin{equation}\label{claim-l98}
\frac{r^{2(Q-2)}}{(\om^{A_r(x_0)}(\Delta(x_0,r)))^3}\int_{B(x_0,r)\cap \Om}\frac{G^3(x, A_r(x_0)}{d(x,\partial \Om)^2}\ud x \leq C,
\end{equation}
where $C$ depends only on $n$, the geometry of $\Hn$ and $r_0$ and $M$ (the NTA parameters of $\Om$).
}

We again follow the corresponding proof of Lemma 9.8 in~\cite{jk}, although observe that instead of the dyadic Whitney cubes we need a different family of sets covering $B\cap \Om$. Such family can be constructed by the direct modification of the proof of Proposition 4.1.15 in~\cite{hkst} as follows:

\emph{
There exists a countable family of balls in $B(x_0, r)\cap \Om$ denoted by 
\[
\mathcal{F}:=\left\{B_d(x_i, \frac{1}{8}d(x_i, \partial \Om))\right\},\quad x_i\in \Om,
\]
such that each ball $B_d(x_i)$ has a non-empty intersection with set $B(x_0,r)\cap \Om$ and, moreover, $\sum_{i}\chi_{2B_d(x_i)}\leq 2N^5$, where $N$ stands for the doubling constant in $\Hn$.
}

Let us define the following subfamily of $\mathcal{F}$:
\[
 \mathcal{F}_k:=\left \{B_d(x_i)\in \mathcal{F}\,:\,2^{-k}\leq \frac{1}{8}d(x_i, \partial \Om) \leq 2^{-k+1}\right\},\quad k=-\lceil \log_2 r_0\rceil-1,\dots,0,1,2,\ldots.
\]
Notice that we do not need to take exponents bigger than $\lceil \log_2 r_0\rceil+1$ because any $x_i$ has to satisfy 
\[
d(x_i, \partial \Om)\le d(x_i,x_0)\le \frac{8}{7}r\le\frac{8}{7}r_0.
\]
In order to prove the second inequality above let us set $d(x_i,x_0)=(1+t)r$, for some $t\in\mathbb{R}$ and notice that $d(x_i,x_0)\le r+\frac{1}{8}d(x_i,\partial\Om)$.Thus, we have
\[
(1+t)r=d(x_i,x_0)\le r+\frac{1}{8}d(x_i,\partial\Om)\le r+\frac{1}{8}d(x_i,x_0)=r+\frac{1}{8}(1+t)r,
\]
and so $t\le\frac{1}{7}$.

With the above introduced notation we may reduce the estimate in~\eqref{claim-l98} to the estimate over the balls in $\mathcal{F}$:
\begin{equation}\label{est-thmjk-0}
\int_{B(x_0,r)\cap \Om} \frac{G^3(x, A_r(x_0)}{d(x,\partial \Om)^2}\ud x\leq 2N^5\sum_{k}\sum_{B_d(x_i)\in \mathcal{F}_k} \int_{B_d(x_i)} \frac{G^3(x, A_r(x_0)}{d(x,\partial \Om)^2}\ud x.
\end{equation}
For a fixed $k$ and given ball $B_d(x_i)\in \mathcal{F}_k$ let $x_i^*\in \partial \Om$ denote a point such that
$d(x_i,x_i^*)=d(x_i,\partial \Om)$. That such a point exists is a consequence of compactness of $\overline{\Om}$, as $\Om$ is a bounded domain.

Set $\Delta_i:=\Delta(x_i^*, 2^{-k+1})=B(x_i^*, 2^{-k+1})\cap \partial \Om$, a surface ball. By the Harnack inequality for a harmonic function $G(\cdot,A_r(x_0))$ applied on a ball $B_d(x_i)$ we have that $G(x,A_r(x_0))\approx_{C}G(x_i, A_r(x_0))$ and, thus,
\begin{equation}\label{est-thmjk-1}
 \int_{B_d(x_i)\in \mathcal{F}_k} \frac{G^3(x, A_r(x_0))}{d(x,\partial \Om)^2}\ud x\approx G^3(x_i, A_r(x_0)) 2^{2k} \left(\frac{1}{8}d(x_i,\partial \Om)\right)^Q\approx G^3(x_i, A_r(x_0)) 2^{k(2-Q)}.
\end{equation}
Indeed, if $x\in B_d(x_i)\in \mathcal{F}_k$, then $d(x_i,\partial \Om) \lesssim d(x,\partial \Om) \leq d(x_i,\partial \Om)+d(x,x_i)\leq 16\cdot2^{-k+1}$ and so $d(x,\partial \Om)\approx 2^{-k}$. 

Next we show that we may consider points $x_i$ as the corkscrew points in Definition~\ref{def:NTA}(1), so that $x_i:=A_{2^{-k+4}}(x_i^*)$. Since $B_d(x_i)\in\mathcal{F}_k$ we have
\[
d(x_i,x^*_i)=d(x_i,\partial\Om)\le 2^{-k+4}.
\]
 On the other hand
\[ 
d(x_i,x^*_i)=d(x_i,\partial\Om)\ge \frac{2^{-k+4}}{2}\ge\frac{2^{-k+4}}{M}
\]
for any $M\ge 2$. However, in the definition of NTA domains we only have existence of some constant $M$. If it happens that that $M<2$, we can always increase it without losing anything in said definition. Hence, we can assume $M\ge 2$. This shows that indeed $x_i=A_{2^{-k+4}}(x_i^*)$. Now let us choose a point $y_0\in \Om\setminus B_d(x_i^*,a2^{-k+4})$. In fact, one can take $a=M$, see Section 5 \cite{cgn} and, moreover, assume that   $y_0:=A_{\widetilde{C}r}(x_i^*)$ with $\widetilde{C}=\widetilde{C}(M)$.

We apply Theorem~\ref{cg-thm21} to obtain the following estimate
\begin{align}
G(x_i, A_{\widetilde{C}r}(x_i^*))&=G(A_{\widetilde{C}r}(x_i^*),A_{2^{-k+4}}(x_i^*)) \nonumber \\
&\approx \frac{2^{2(-k+4)}}{|B_d(A_{\widetilde{C}r}(x_i^*), 2^{-k+4})|} \om^{A_{\widetilde{C}r}(x_i^*)}(\Delta(x_i^*,2^{-k+4}))\nonumber \\
&\lesssim 2^{(-k+4)(2-Q)} \om^{A_{\widetilde{C}r}(x_i^*)}(\Delta_i), \label{claim2-aux1}
\end{align}
where in the last inequality we use the doubling property of the harmonic measure. Furthermore, observe that
\begin{equation}\label{claim2-aux2}
G(x_i, A_{\widetilde{C}r}(x_i^*))=G(A_{\widetilde{C}r}(x_i^*), x_i)\approx_{C,M} G(A_r(x_0), x_i)=G(x_i, A_r(x_0)),
\end{equation}
and the change of point $A_{\widetilde{C}r}(x_i^*)$ to $A_r(x_0)$ in the middle estimate requires explanation:  Indeed, choose $A_{\widetilde{C}r}(x_i^*)\in\Om\setminus B_d(x_i^*,a2^{-k+4})$ and observe that, by the above discussion, we can consider the same $y_0=A_{\widetilde{C}r}(x_i^*)$ for any point $x_i^*$. We also recall that $d(y_0,\partial\Om)\ge \frac{\widetilde{C}}{M}r$ and that by the definition $d(A_r(x_0),\partial\Om)\ge\frac{r}{M}$. Moreover, we can assume that $d(A_r(x_0),A_{\widetilde{C}r}(x_i^*))\le Cr$, where $C=C(M)$. Therefore, points $A_r(x_0)$ and $A_{\widetilde{C}r}(x_i^*)$ can be joined by a Harnack chain of length depending only on $C(M)$. This observation together with the Harnack inequality allow us to replace in~\eqref{claim2-aux2} point $A_{\widetilde{C}r}(x_i^*)$ with $A_r(x_0)$ by the price of possibly increasing constants. 

Upon applying estimates~\eqref{claim2-aux1} and~\eqref{claim2-aux2} in~\eqref{est-thmjk-1} we obtain the following:
\begin{equation}\label{est-thmjk-2}
 \int_{B_d(x_i)\in \mathcal{F}_k} \frac{G^3(x, A_r(x_0))}{d(x,\partial \Om)^2}\ud x\approx G^2(x_i, A_r(x_0))\om^{A_{\widetilde{C}r}(x_i^*)}(\Delta_i).
\end{equation}
In order to estimate the expression on the right-hand side, we appeal to the Carleson-type estimate, see Lemma 1 in~\cite{cg98}. Recall that $G\geq 0$ in $\Om$ and $G(\cdot, A_r(x_0))\equiv 0$ on $\partial \Om$. Moreover, notice that $\Delta(x_0, 2r)\subset \Delta(x_i^*, Cr)$. Indeed, let $y\in \Delta(x_0, 2r)$. Then,
\[
 d(y, x_i^*)\leq d(y, x_0)+d(x_0, x_i)+d(x_i, x_i^*) \leq r + (r+r2^{-k+1})+2^{-k+1}\lesssim Cr.
\]
The last step requires that $2^{-k}\leq Cr$ and under our assumptions this restriction is sufficient. Since, otherwise suppose that $2^{-k}\geq r$. Then for any $x_i$ it holds that $d(x_i, \partial \Om)\geq 8\cdot 2^{-k}\geq 8r$. However, on the other hand,
$$
d(x_i,\partial \Om)\leq d(x_i,x_0)\leq r+2r_i \leq r+\frac{1}{4}d(x_i,\partial \Om).
$$
Hence $d(x_i,\partial \Om)\leq \frac43 r$, giving the contradiction.

We apply Lemma 1 in \cite{cg98} on $B_d(x_i^*,Cr)$ to get that, for an exponent $\beta>0$, the following holds
 \begin{equation}\label{est-thmjk-a}
G^2(x_i, A_r(x_0)) \leq C(M, r_0) \left(\frac{d(x_i, x_i^*)}{Cr}\right)^{2\beta}\Big(\sup_{x\in \partial B_d(x_i^*,Cr)\cap \Om} G(x, A_r(x_0))\Big)^2.
 %\lesssim_{C} \left(\frac{8\cdot 2^{-k+1}}{Cr}\right)^{2\beta} r^{2(2-Q)} (\om^{z}(\Delta(x_0,r)))^2, 
\end{equation}
Denote by $z \in \partial B_d(x_i^*,Cr)\cap \Om$ a point, where function $G(\cdot, A_r(x_0))$ attains its maximum. (Notice that this maximum cannot be obtained at a point in $\partial B_d(x_i^*,Cr)\cap \partial \Om$, as then it would be zero, as $G(\cdot, A_r(x_0))\equiv 0$ on $\partial \Om$ and by the maximum principle $G$ would be zero on $B_d(x_i^*,Cr)\cap \Om$). Therefore, the Carleson estimate in Theorem 9 in \cite{cg98} gives us that
\[
G(z, A_r(x_0))\lesssim_{M, r_0} G(A_{Cr}(x_i^*), A_r(x_0)).
\]
Here, in order to apply~\cite[ Theorem 9]{cg98}, we need to slightly increase constant $C$ on the right-hand side of the estimate, so that point $z$ belongs to $B_d(x_i^*,Cr)\cap \Om$. Moreover, in the case $A_r(x_0)\in B_d(x_i^*,2Cr)\cap \Om$ one needs additional chaining argument, by the definition of the NTA domains, to join $A_r(x_0)$ with the point $A_{Cr}(x_0)\not \in B_d(x_i^*,2Cr)\cap \Om$. This however, can be done by the price of increasing again constant $C$. From this discussion and by~\eqref{est-thmjk-a} we infer, by the Dahlberg-type estimate in Theorem~\ref{cg-thm21}, that
\[
G^2(x_i, A_r(x_0))\lesssim_{C} \left(\frac{8\cdot 2^{-k+1}}{Cr}\right)^{2\beta} r^{2(2-Q)} (\om^{A_{Cr}(x_i^*)}(\Delta(x_0,r)))^2.
\]
Then we apply the last estimate in~\eqref{est-thmjk-2} and in~\eqref{est-thmjk-0} to arrive at the following inequality
\[
\int_{B(x_0,r)\cap \Om} \frac{G^3(x, A_r(x_0)}{d(x,\partial \Om)^2}\ud x\lesssim_{C} 2N^5\sum_{k}
\left(\frac{2^{-k}}{r}\right)^{2\beta} r^{2(2-Q)} (\om^{A_{Cr}(x_i^*)}(\Delta(x_0,r)))^2\om^{A_{\widetilde{C}r}(x_i^*)}(\Delta_i).
\]
Finally, recall that by the discussion following~\eqref{claim2-aux2} we may join points $A_{\widetilde{C}r}(x_i^*)$ and $A_r(x_0)$ by the Harnack chain whose length depends only on $M$. By applying this observation, we conclude that 
\[
 \om^{A_r(x_0)}(\Delta(x_0,r))\approx_{C, M} \om^{A_{Cr}(x_i^*)}(\Delta(x_0,r)).
\]
Hence, \eqref{est-thmjk-0} becomes
\begin{equation*}
\int_{B(x_0,r)\cap \Om} \frac{G^3(x, A_r(x_0)}{d(x,\partial \Om)^2}\ud x\leq 2N^5\sum_{k} \left(\frac{2^{-k}}{r}\right)^{2\beta} r^{2(2-Q)} (\om^{A_{r}(x_0)}(\Delta(x_0,r)))^3
\end{equation*}
and thus the proof of Claim 2 is completed.
%\komT{Ad 6.8 i 6.10: gdyby pominac ADP to w ostatnim oszacowaniu pojawi sie $2^k\cdot 2^k$ , ktore z potega $2^{-2k\beta}$ daje skonczona sume?
%
%$$
%\int_{B(x_0,r)\cap \Om} \frac{G^3(x, A_r(x_0)}{d(x,\partial \Om)^2}\ud x\leq 2N^5\sum_{k} 
%\left(\frac{2^{k(\frac{1}{\beta}-1)}}{r}\right)^{2\beta} r^{2(2-Q)} (\om^{A_{r}(x_0)}(\Delta(x_0,r)))^3.
%$$
%Niestety, $\beta<1$.
%}
\smallskip

{\sc Continuation of the proof of Theorem~\ref{thm-jk96}.}
\smallskip

We are now in a position to complete the proof of Theorem~\ref{thm-jk96}. Suppose that $f\in BMO(\partial \Om, \ud \om)$ and $u$ is the harmonic function in $\Om$ such that $u(x)=\int_{\partial \Om} f(y)\ud \om^x(y)$.

Let $B(x_0,r)$ be a ball centered at $x_0\in \partial \Om$ with radius $r<\min\{1, r_0\}$ and denote by $\Delta_1=2\Delta:=B(x_0,2r)\cap \partial \Om$. % and by $\Delta:=B(x_0,r)\cap\Om. 
We modify the boundary data as follows:
\[
 f_1:=(f-f_{\Delta_1})\chi_{c\Delta_1},\qquad  f_2:=(f-f_{\Delta_1})\chi_{\partial \Om \setminus c\Delta_1}
\]
and, as in \cite{jk} we let $u_1$ and $u_2$ be their harmonic extensions, respectively, i.e.
\[
 u_i(x)=\int_{\partial \Om} f_i(y)\ud \om^x(y),\quad i=1,2.
\]
By direct application of Proposition~\ref{prop-thm514} to $f_1$ and $u_1$ we obtain that
\begin{align}
 &\frac{1}{\om^{A_{r}(x_0)}(\Delta)} \int_{B(x_0,r)\cap \Om} |\nabla_H u_1|^2 G(x,A_{r}(x_0))\ud x \nonumber \\
 &\phantom{AAAAA}\leq  \frac{1}{\om(\Delta)} \int_{\Om} |\nabla_H u_1|^2 G(x, A_{r}(x_0))\ud x \label{thm-jk-est-u1}\\
 &\phantom{AAAAA}=\frac12 \frac{1}{\om(\Delta)} \int_{\partial \Om} |(f(y)-f_{\Delta_1})\chi_{c\Delta_1}|^2\ud \om^z(y)
 <\|f\|^2_{BMO(\partial \Om, \ud \om)}, \label{thm-jk-est-u2}
\end{align}
where in the last inequality we use the John--Nirenberg theorem to get the equivalent definition of the BMO spaces in terms of the $L^2(\partial \Om, \ud \om)$-functions with $L^2$-integrable means, well-known in the Euclidean setting. Indeed, such an equivalent definition holds, as by Theorem 2 in~\cite{cg98}, the harmonic measure $\om^z$ is doubling in $\Om$ for $z$ enough away from the boundary of $\Om$ and we may repeat the appropriate part of the reasoning in Proposition 3.19~\cite{bb-book}, as long as the John--Nirenberg lemma holds for the surface balls $\Delta$. However, this follows by direct application of Theorem 5.2 in~\cite{aalto}:
\smallskip

\emph{
 Let $\Delta=B(x_0, r)\cap \partial \Om$ be a surface ball and $f\in BMO(\partial \Om, \ud \om^z)$. Then for all $\lambda>0$ 
 \[
  \om^z(\{z\in \Delta: |f(x)-f_{\Delta}| > \lambda \}) \leq c_1 \om^z(\Delta) e^{-\frac{c_2\lambda}{\|f\|_{BMO(\partial \Om, \ud \om^z)}}},
 \]
 where $c_1, c_2$ do not depend on $f$ and $\lambda$.
}
\smallskip
\\
The proof of John-Nirenberg lemma in \cite{aalto} requires only that the measure is doubling. By applying this result to the metric measure space $(\partial \Om, d|_{\partial \Om}, \ud \om^z)$, we conclude that indeed~\eqref{thm-jk-est-u2}  holds.

 By the gradient estimate for harmonic functions (see~\cite[Proposition 2.1]{lu}) and by the Harnack inequality on $B_d$-balls, we have that for any $x\in \Om$
\[
 |\nabla_H u_2(x)|\leq \frac{4c(n)}{d(x,\partial \Om)} \sup_{B_d(x,\frac14 d(x,\partial \Om))} |u_2|\leq \frac{C}{d(x, \partial \Om)} \int_{\partial \Om \setminus c\Delta_1}|f-f_{\Delta_1}|\ud \om^x. 
\]
We denote the last integral by $v(x)$ and apply the (local) boundary Harnack inequality in Theorem~\ref{cg-thm22} to $u_2$ and $G(\cdot, A_r(x_0))$, followed by the use of~\eqref{claim1-thm96} in Claim 1 and the Dahlberg-type estimate in Theorem~\ref{cg-thm21}, to arrive at the following estimate holding for $x\in B(x_0,r)\cap \Om$
\[
 v(x)\lesssim_{n, M, r_0} \frac{v(A_{2Mr}(x_0))}{G(A_{2Mr}(x_0),A_r(x_0))} G(x, A_r(x_0))\lesssim_{n, M, r_0} \|f\|_{BMO(\partial \Om, \ud \om)} \frac{r^{Q-2}}{\om^{A_{2Mr}(x_0)}(\Delta(x_0,r))} G(x, A_r(x_0)).
\]
Since $\Om$ in an NTA domain, we may apply the Harnack chain condition to join points $A_r(x_0)$ and $A_{2Mr}(x_0)$ with the chain of at most $CM$ balls and invoke the Harnack inequality to conclude that 
\[
\om^{A_{2Mr}(x_0)}(\Delta(x_0,r))\approx_{C, M} \om^{A_{r}(x_0)}(\Delta(x_0,r)).
\]
We apply this observation together with estimates for $v$ and $|\nabla_H u_2|$ and apply~\eqref{claim-l98} in Claim 2 to obtain that
\begin{align*}
  \frac{1}{\om^{A_{r}(x_0)}(\Delta)} \int_{B(x_0,r)\cap \Om}|\nabla_H u_2|^2 G(x, A_r(x_0))\ud x &\leq
   C \|f\|^2_{BMO(\partial \Om, \ud \om)} \frac{r^{2(Q-2)}}{(\om^{A_{r}(x_0)}(\Delta))^3}\int_{B(x_0,r)\cap \Om} \frac{G(x, A_r(x_0))^3}{d(x,\partial \Om)^2} \ud x \\
& \leq C \|f\|^2_{BMO(\partial \Om, \ud \om)}.
\end{align*}
Finally, we combine this estimate with the previous one for $|\nabla_h u_1|$, see~\eqref{thm-jk-est-u2}, and note that $\nabla_H u=\nabla_h u_1+\nabla_H u_2$. From this the assertion of Theorem~\ref{thm-jk96} follows.
\hfill $\square$

%We proceed to the proof of the second main result of this section, namely the $L^2(\ud \om)$-boundedness of the square function. 

\section{The Fatou-type theorem}

The goal of this section is to prove a version of the harmonic Fatou theorem in the Heisenberg setting. The studies of such theorems have led to several important notions and results to which our manuscript appeals to, for instance, the NTA domains, the area function and the non-tangential maximal function, see e.g.~\cite[Section 1]{jk}. The classical Fatou theorem asserts that a bounded harmonic function defined on the half-space in $\Rn$ has non-tangential limits at almost every point of the boundary, see e.g.~\cite[Theorem 2, Ch. VII]{st}, see also~\cite{car} for the local version. For the NTA domains in the Euclidean spaces, the Fatou theorem with respect to the harmonic measure is due to~\cite[Theorem 6.4]{jk}. In the subriemmanian setting the analogous results are proven in~\cite[Theorem 4]{cg98} for bounded NTA domains.
% and assert existence of non-tangential limits at almost all boundary points with respect to the harmonic measure.

We show a counterpart of the Fatou theorem for $(\ep,\delta)$-domains and, thus, for more general domains than the NTA ones, see the discussion below. Moreover, we are able to show the refinement of classical results, namely that nontangential limits of a harmonic function $u$ exist outside a set of $p$-capacity zero, not only zero measure. This, however, is obtained under stronger assumption on the global $L^p$-integrability of the gradient of harmonic function. 

We will now recall necessary definitions.
%\begin{defn}[Franchi \cite{fr}]\label{defn: John}
%A domain $\Om$ is  a John domain if there exists a point $x_0\in\Om$ and a constant $C>0$ such that for every point $x\in\Om$ there exists a rectifiable path parametrized by an arclength $\gamma:[0,T]\to\Om$ with $\gamma(0)=x_0$ and $\gamma(T)=x$ such that 
%\[
%d(\gamma(t),\partial\Om)\ge Ct.
%\]
%\end{defn}
%The definition of John domain was given by John in \cite{john}. One can also find it in the work by Martio and Sarvas in \cite{ms}. We use the definition given by Franchi in \cite{fr} as it was the one Franchi used to prove Theorem 2.31 in \cite{fr}.\komM{Moze lepiej nie pisac, bo recenzent bedzie chcial dyskusji o rownowaznosci definicji.}
%\komM{Skomentowac skad jest wzieta definicja dziedzin John. Pierwszy zdefiniowal je John, ogolniejsza definicje dali Martio, Sarvas}
\begin{defn}[cf. Definition 2.7 in~\cite{nh2}]\label{defn: epsilon-delta}
We say that a bounded domain $\Om\subset \Hn$ is an $(\varepsilon,\delta)$-domain if for all $x,y\in\Om$ such that $d(x,y)<\delta$ there exists a rectifiable curve $\gamma\subset\Om$ joining $x$ and $y$ satisfying
\[
l(\gamma)\le\frac{1}{\varepsilon}d(x,y),
\]
and
\[
d(z,\partial\Om)\ge \varepsilon \min\{d(x,z),d(y,z)\} \hspace{3mm} \textnormal{for all } z \textnormal{ on } \gamma.
\]
\end{defn}
 The definition of the $(\varepsilon,\delta)$-domains in the Euclidean setting was first given in~\cite{jones} and a fact that such domains are uniform and hence, John domains, is observed in Remark 4.2 in~\cite{va}. The above definition has also a counterpart in more general Carnot groups, see Definition 4.1 in~\cite{nh}, and leads to an extension theorem applied in the proof of Theorem~\ref{thm-Fatou} below, see Theorem 1.1 in~\cite{nh}. It is also known that a large class of NTA domains in $\Hn$ satisfies the definition of $(\varepsilon,\delta)$-domains, see Theorem 1.2 in~\cite{nh} and the discussion following it. Moreover, bounded $(\varepsilon,\delta)$-domains are uniform, see also~\cite{ct}.

For the definition and basic properties of $p$-Sobolev capacities we refer to \cite[Chapter 7.2]{hkst}.

\begin{theorem}\label{thm-Fatou}
 Let $\Om\subset \Hn$ be a bounded $(\ep, \delta)$-domain and let further $u$ be harmonic in $\Om$. If $\int_{\Om} |\nabla_H u|^p<\infty$ for some $1<p\leq 2n+2$, then $u$ has nontangential limits on $\partial \Om$ along horizontal curves in $\Om$ outside the set of $p$-Sobolev capacity zero.
\end{theorem}

The proof of the theorem employs among other results the following auxiliary observations. The proof of the first one is new in the literature, due to applying results of \cite{aw}.

\begin{lem}\label{lem-sub-mvp} Let $u$ be harmonic function in $\Om\subset \Hn$. Then, for any Kor\'anyi--Reimann ball $B_{KR}(x,r)\subset B_{KR}(x, 2r)\subset \Om$ and all $a\in \R$ we have that for any $p>1$ 
\[
 \sup_{B_{KR}(x,r)} |u(y)-a| \leq C(p,n)\left(\vint_{B_{KR}(x,2r)} |u(y)-a|^p \ud y\right)^{\frac{1}{p}}.
\]
\end{lem}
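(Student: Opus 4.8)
The plan is to deduce the pointwise supremum bound from the mean-value inequality for nonnegative subsolutions of the sub-Laplacian, which is standard in the literature (De Giorgi--Nash--Moser theory for Hörmander vector fields) but here we want the version with $L^p$ on the right for arbitrary $p>1$. First I would reduce to $a=0$: since $u$ is harmonic, $u-a$ is harmonic as well (constants are annihilated by $\Delta_H$), so replacing $u$ by $u-a$ we may assume $a=0$ and must show $\sup_{B_{KR}(x,r)}|u| \le C(p,n)\bigl(\vint_{B_{KR}(x,2r)}|u|^p\bigr)^{1/p}$. Next, since $u$ is harmonic it is in particular a (weak) subsolution, and so is $|u|$; thus it suffices to prove the Moser-type estimate for nonnegative subsolutions $w = |u| \ge 0$, namely $\sup_{B_{KR}(x,r)} w \le C(p,n)\bigl(\vint_{B_{KR}(x,2r)} w^p\bigr)^{1/p}$, for every $p>1$.

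The core is the local boundedness estimate of Moser type, which in the Heisenberg / Carnot setting follows from the Poincaré inequality, the doubling property of Lebesgue measure (recalled in Section 2.1, with $Q=2n+2$), and the Sobolev embedding associated to the horizontal gradient. I would obtain it via a Moser iteration on a sequence of concentric Korányi--Reimann balls $B_{KR}(x,\rho_j)$ with $\rho_j \downarrow r$ starting from $\rho_0 = 2r$: testing the subsolution inequality for $w$ against $\eta^2 w^{q-1}$ for cut-offs $\eta$ and exponents $q = p\kappa^j$ (where $\kappa = Q/(Q-2)$ when $Q>2$, and any fixed $\kappa>1$ via the general Sobolev inequality valid in doubling Poincaré spaces when $Q=2$, which covers $n=1$), yields the reverse-Hölder chain $\bigl(\vint_{B_{KR}(x,\rho_{j+1})} w^{q\kappa}\bigr)^{1/(q\kappa)} \le (C_j)\bigl(\vint_{B_{KR}(x,\rho_j)} w^{q}\bigr)^{1/q}$, with $\sum_j \log C_j < \infty$ after the usual choice $\rho_j - \rho_{j+1} \approx 2^{-j} r$; letting $j\to\infty$ gives the claimed $L^\infty$--$L^p$ bound. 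This is exactly the mechanism behind the cited reference \cite{aw} on which the authors say the lemma is based, so in the writeup I would invoke the relevant local boundedness theorem there (applied to the nonnegative subsolution $|u|$ of $\Delta_H$) and only spell out the harmless reduction $a \mapsto 0$ and $u \mapsto |u|$, noting that the constant depends only on $p$, on $Q=2n+2$, and on the Poincaré and doubling constants of $\Hn$, hence only on $p$ and $n$.

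The only genuine subtlety — and the step I would expect to be the main obstacle to state cleanly rather than to prove — is that the classical Moser estimate is typically quoted with a fixed exponent (say $p=2$), and passing from $p=2$ down to every $p>1$ requires either running the iteration with a starting exponent $p<2$ (which is fine, the iteration scheme is insensitive to the seed exponent as long as it is positive) or, alternatively, first proving the $p=2$ version and then upgrading to all $0<p$ by the standard interpolation/Young's-inequality trick: $\sup_{B_{KR}(x,r)}w \le \tfrac12 \sup_{B_{KR}(x,(r+\rho)/2)}w + C(\rho)\vint w^p$ absorbed via a dilation/iteration lemma. I would use the latter, since it keeps the iteration at the familiar exponent and isolates the $p<2$ case into a one-line absorption argument; the dependence of $C$ on $p$ stays explicit throughout. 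With $a=0$ reinstated as $u-a$, this is the full proof.
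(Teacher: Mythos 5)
Your proof is essentially correct, but it takes a genuinely different and much heavier route than the paper. You run the full De Giorgi--Nash--Moser machinery: reduce to $a=0$, pass to the nonnegative subsolution $|u-a|$, and iterate reverse-H\"older inequalities on shrinking Kor\'anyi--Reimann balls using the doubling property, the Poincar\'e inequality and the horizontal Sobolev embedding, with the extra absorption trick to reach exponents $p<2$. That argument does work (the subelliptic Moser estimate for nonnegative subsolutions of $\Delta_H$ is valid for every seed exponent $p>0$, and the constants depend only on $p$ and $Q=2n+2$; comparability of the Kor\'anyi and Carnot--Carath\'eodory metrics lets you transfer the Poincar\'e/Sobolev inequalities between the two families of balls), and it is more robust, since it never uses harmonicity beyond the subsolution property. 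The paper's proof is far more elementary: it invokes the exact mean-value representation formula for $\Delta_H$-harmonic functions from~\cite{aw} (cf.\ Theorem 5.5.4 in~\cite{blu}), whose kernel $|\nabla_H d|^2$ is bounded by $1$, so that $|u(y)-a|\leq \vint_{B_{KR}(y,r)}|u(z)-a|\,\ud z$ directly for every $y\in B_{KR}(x,r)$; a single application of H\"older's inequality and the doubling property (to pass from $B_{KR}(y,r)$ to $B_{KR}(x,2r)$) then gives the statement for all $p\geq 1$ at once, with no iteration and no case distinction in $p$. Two small inaccuracies in your write-up: the reference~\cite{aw} is about the mean-value property, not about Moser iteration, so your claim that your scheme is ``exactly the mechanism behind'' that citation is off the mark (and indeed the paper uses it quite differently); and the case $Q=2$ you set aside never occurs, since in $\Hn$ one has $Q=2n+2\geq 4$ even for $n=1$. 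Neither affects the validity of your argument.
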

The result is well-known in the Euclidean setting and for $\mathcal{A}$-harmonic functions, see Lemma 3.4 in~\cite{hkm}.
\begin{proof}
 We apply the mean-value theorem in $\Hn$, see Theorem 4.4 in \cite{aw} (cf. Theorem 5.5.4 in \cite{blu}). By Definition 5.5.1 in \cite{blu} and pg. 253 we know that $|\nabla_H d|^2(x)\leq 1$ for any $x\not=0$ and so, we have that for any point $y\in B_{KR}(x,r)$
 \[
 |u(y)|\leq \vint_{B_{KR}(y,r)} |u(z)|\ud z.
 \]
By the H\"older inequality and the fact that if $u$ is harmonic then so is $u-c$, for any constant $c\in \R$, we obtain that
 \[
 |u(y)-a|\leq\left( \vint_{B_{KR}(y,r)} |u(z)-a|^p\ud z\right)^{\frac{1}{p}}. 
 \]
 Since for any $y\in B_{KR}(x,r)$ it holds that $B_{KR}(y,r)\subset B_{KR}(x,2r)$, the claim follows by the doubling property of the Lebesgue measure.
\end{proof}

\begin{lem}\label{lem-dens} Let $u\in HW^{1,p}(\Hn, \R)$ for some $1<p<2n+2$. Then
\[
 \lim_{r\to 0} \vint_{B(x,r)} |u(x)-u(y)|^p\ud x=0,
\]
for all points $x\in \Hn$ except for a set $E\subset \Hn$ of $p$-Sobolev capacity zero.
\end{lem}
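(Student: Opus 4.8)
The plan is to derive the lemma from the standard Lebesgue-point theory for Sobolev functions on doubling metric measure spaces carrying a Poincar\'e inequality, specialised to $\Hn$. Recall that $(\Hn,d,\mathcal{L}^{2n+1})$ is complete, doubling and supports a $(1,1)$-Poincar\'e inequality (see~\cite{blu}), and that $HW^{1,p}(\Hn)$ is the Newtonian space built on the horizontal gradient; hence, since $1<p<Q:=2n+2$, the Sobolev--Poincar\'e inequality $\big(\vint_{B}|w-w_{B}|^{p^{\ast}}\big)^{1/p^{\ast}}\lesssim r\big(\vint_{\lambda_0 B}|\nabla_H w|^{p}\big)^{1/p}$, with $p^{\ast}=pQ/(Q-p)>p$ and dilation constant $\lambda_0\geq 1$, is available. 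I would work throughout with the precise representative $u^{\ast}(x):=\limsup_{r\to 0}\vint_{B(x,r)}u\,\ud y$; the classical Lebesgue differentiation theorem gives $u^{\ast}=u$ a.e., and it suffices to prove that
\[
\Omega u(x):=\limsup_{r\to 0}\Big(\vint_{B(x,r)}|u(y)-u^{\ast}(x)|^{p}\,\ud y\Big)^{1/p}=0
\]
for $p$-quasi every $x$, because $\{\Omega u>0\}=\bigcup_{m}\{\Omega u>1/m\}$ and the $p$-Sobolev capacity is countably subadditive.

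First I would fix $\lambda>0$ and, for each $k\in\N$, use density of $C_{0}^{\infty}(\Hn)$ in $HW^{1,p}(\Hn)$ to pick $v_{k}\in C_{0}^{\infty}(\Hn)$ with $\|u-v_{k}\|_{HW^{1,p}(\Hn)}^{p}\leq 2^{-k}$, and set $w_{k}:=u-v_{k}$, $g_{k}:=|\nabla_H w_{k}|$, so $\|g_{k}\|_{L^{p}}^{p}\leq 2^{-k}$. Since $v_{k}$ is locally Lipschitz we have $\Omega v_{k}\equiv 0$, hence $\Omega u(x)\leq\Omega w_{k}(x)$ at every point where $w_{k}^{\ast}(x)$ is a finite limit. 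Applying the Sobolev--Poincar\'e inequality to the main term and a telescoping estimate over the balls $B(x,2^{-j}r)$ to $|(w_{k})_{B(x,r)}-w_{k}^{\ast}(x)|$ yields
\[
\Big(\vint_{B(x,r)}|w_{k}(y)-w_{k}^{\ast}(x)|^{p}\,\ud y\Big)^{1/p}\;\lesssim\;\sup_{0<\rho<r}\rho\Big(\vint_{B(x,\lambda_0\rho)}g_{k}^{p}\,\ud y\Big)^{1/p}\;=:\;\mathsf{M}^{\sharp}_{r}g_{k}(x),
\]
and therefore $\Omega u(x)\lesssim\mathsf{M}^{\sharp,0}g_{k}(x):=\inf_{r>0}\mathsf{M}^{\sharp}_{r}g_{k}(x)$.

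The heart of the argument is the capacitary weak-type bound $\mathrm{cap}_{p}\big(\{\mathsf{M}^{\sharp}_{1}g_{k}>\lambda\}\big)\lesssim\lambda^{-p}\|g_{k}\|_{L^{p}}^{p}$. To obtain it I would, for each $x$ in this set, choose $\rho_{x}<1$ with $\rho_{x}^{p}\vint_{B(x,\rho_{x})}g_{k}^{p}>\lambda^{p}$, use the $5r$-covering lemma (valid in doubling spaces) to extract a pairwise disjoint subfamily $\{B(x_{i},\rho_{i})\}$ with $\{\mathsf{M}^{\sharp}_{1}g_{k}>\lambda\}\subset\bigcup_{i}5B(x_{i},\rho_{i})$, observe that $|B(x_{i},\rho_{i})|\approx\rho_{i}^{Q}$ turns the selection condition into $\rho_{i}^{Q-p}\lesssim\lambda^{-p}\int_{B(x_{i},\rho_{i})}g_{k}^{p}$, and combine this with the standard bound $\mathrm{cap}_{p}(B(x,\rho))\lesssim\rho^{Q-p}$ for $\rho\leq 1$, countable subadditivity of $\mathrm{cap}_{p}$ and disjointness to get $\mathrm{cap}_{p}(\{\mathsf{M}^{\sharp}_{1}g_{k}>\lambda\})\lesssim\sum_{i}\rho_{i}^{Q-p}\lesssim\lambda^{-p}\sum_{i}\int_{B(x_{i},\rho_{i})}g_{k}^{p}\leq\lambda^{-p}\|g_{k}\|_{L^{p}}^{p}$. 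Since $\mathsf{M}^{\sharp,0}g_{k}\leq\mathsf{M}^{\sharp}_{1}g_{k}$, the same bound holds for $\{\mathsf{M}^{\sharp,0}g_{k}>\lambda\}$. Combining the three steps, for every fixed $\lambda>0$ and every $k$ one gets $\mathrm{cap}_{p}(\{\Omega u>\lambda\})\lesssim\lambda^{-p}\|g_{k}\|_{L^{p}}^{p}\leq\lambda^{-p}2^{-k}$; letting $k\to\infty$ gives $\mathrm{cap}_{p}(\{\Omega u>\lambda\})=0$ for each $\lambda$, and taking $\lambda=1/m$ and summing over $m$ completes the proof, the exceptional set being $E=\{\Omega u>0\}$ enlarged by the $p$-null set on which $u^{\ast}$ is not a finite limit.

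I expect the genuine difficulty to be mild and of a bookkeeping nature: making the telescoping estimate uniform in $r$, and justifying that the set where $w_{k}^{\ast}$ fails to be a finite limit has $p$-capacity zero --- this follows by running the same covering argument on the Hardy--Littlewood maximal function of $g_{k}$, or can be quoted from the quasicontinuity theory for Newtonian functions. One also needs to record that the ingredients invoked (the $(1,1)$-Poincar\'e inequality on $\Hn$, the identification $HW^{1,p}(\Hn)=N^{1,p}(\Hn)$, and $\mathrm{cap}_{p}(B(x,r))\lesssim r^{Q-p}$) hold with the stated constants; all are classical. If one prefers to avoid reproving these, the lemma is an instance of the $p$-Lebesgue-point theorem for Newtonian Sobolev functions on PI spaces, e.g.~\cite{hkst}, which would be the shortest route.
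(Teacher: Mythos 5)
Your closing remark is in fact the paper's entire proof: the authors simply cite the Lebesgue-point theorem for Sobolev functions on complete doubling metric spaces supporting a $p$-Poincar\'e inequality (Theorem 4.5 in~\cite{kl} and Theorem 9.2.8 in~\cite{hkst}) and note that $(\Hn,d_{CC},\ud x)$ satisfies the hypotheses. So if you take the citation route, you agree with the paper. The self-contained argument you sketch instead, however, has a genuine gap at its central step. The telescoping estimate does not yield the bound you claim: the chaining over the balls $B(x,2^{-j}r)$ produces the series $\sum_j 2^{-j}r\bigl(\vint_{B(x,\lambda_0 2^{-j}r)}g_k^p\bigr)^{1/p}$, and each term is bounded by your $\mathsf{M}^{\sharp}_r g_k(x)$, but there are infinitely many terms, so the series is not controlled by the supremum. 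Indeed the claimed pointwise reduction $\Omega u(x)\lesssim \mathsf{M}^{\sharp,0}g_k(x)$ is false in general: if $|\nabla_H u|\approx \bigl(d(y,x_0)\log(1/d(y,x_0))\bigr)^{-1}$ near a point $x_0$ (which is admissible for $p<Q$), then $\rho\bigl(\vint_{B(x_0,\rho)}|\nabla_H u|^p\bigr)^{1/p}\approx 1/\log(1/\rho)\to 0$, yet $u$ blows up like $\log\log$ at $x_0$ and $\Omega u(x_0)=\infty$. Such points form a capacity-null set, but \emph{that} is precisely what your argument would have to prove and does not.

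The standard repairs expose why the missing idea matters. If you insert a convergence factor by using a fractional exponent $\beta<1$, the series converges, but then your covering argument no longer closes: the selection condition gives $\rho^{Q-\beta p}\lesssim \lambda^{-p}\int_B g_k^p$, while $\mathrm{cap}_p(B)\approx \rho^{Q-p}$, and for $\rho\le 1$ the inequality $\rho^{Q-p}\ge \rho^{Q-\beta p}$ goes the wrong way. The same capacity/measure mismatch defeats your proposed fix of ``running the covering argument on the Hardy--Littlewood maximal function of $g_k$'' to control the set where $w_k^{\ast}$ fails to exist finitely (there the selection only controls $\rho^{Q}$, not $\rho^{Q-p}$). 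The proofs in~\cite{kl} and~\cite{hkst} get around this by a genuinely different ingredient: the Sobolev boundedness of a (discrete) maximal operator on $N^{1,p}$ together with the capacitary weak-type estimate $\mathrm{cap}_p(\{v>\lambda\})\lesssim \lambda^{-p}\|v\|_{N^{1,p}}^p$ for Newtonian functions, which simultaneously yields quasicontinuity, the quasi-everywhere existence of the finite precise representative, and the convergence of the $L^p$-averages. Your capacitary estimate for $\mathsf{M}^{\sharp}_1 g_k$ itself (with the full exponent $p$) is correct, but without the maximal-function theorem or a potential-theoretic substitute the reduction to it does not hold, so as written the blind proof does not go through; quoting~\cite{kl} or~\cite{hkst}, as the paper does, is the efficient and correct way to close it.
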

The result is a counterpart of Lemma 3.2 in~\cite{kmv} and Theorem 3.10.2 in~\cite{ziem} proven for $\Rn$ and the Bessel capacity. The proof follows from more general results for complete metric measure spaces supporting the $p$-Poincar\'e inequality, see Theorem 4.5 in~\cite{kl} and also Theorem 9.2.8 in~\cite{hkst}.
% and Theorem 5.62 in~\cite{bb-book}.
The metric space $(\Hn, d_{CC}, \ud x)$ satisfies the assumption of these theorems, see e.g. the discussion on pg. 400-403 in~\cite{hkst}.

In the proof below we also need the following non-local version of the $p$-Poincar\'e inequality for a John domain $\Om\subset\Hn$, see Theorem 2.31 in~\cite{fr}: 
\begin{equation}\label{ineq-glob-pi}
 \int_{\Om}|f-f_{\Om}|^q \ud x\leq C_{\Om} \int_{\Om}|\nabla_H f|^p\ud x,
\end{equation}
where $1\leq p<Q$ and $Q=2n+2$, $q=\frac{pQ}{Q-p}=2+\frac{2}{n}$, and $f$ is a Lipschitz function. Moreover, the constant $C_{\Om}$ is independent of $f$. Here, we specialize the statement in~\cite{fr} to our setting. In particular, observe that the balance condition in \cite[Theorem 2.14]{fr} is with our $p,q$ and $Q$ equivalent to the so-called relative lower volume decay, cf. (9.1.14) in \cite{hkst}. This in turn holds if an underlying measure is doubling, which is the case for the Lebesgue measure in $\Hn$.

\begin{proof}[Proof of Theorem~\ref{thm-Fatou}]
 In the proof we follow the steps of the corresponding Euclidean result, cf. \cite[Theorem 3.1]{kmv}. Since $\int_{\Om} |\nabla_H u|^p<\infty$, it holds by the Poincar\'e inequality~\eqref{ineq-glob-pi} that $u\in HW^{1,p}(\Om)$, as $u$ is harmonic in $\Om$ and so analytic, in particular Lipschitz in a bounded domain $\Om$.
 
 We apply an extension result, see Theorem 1.1 in~\cite{nh} with $G=\Hn$ and $\mathcal{L}^{1,p}=HW^{1,p}$ allowing us to conclude that $u\in HW^{1,p}(\Hn)$ provided that $\Om$ is an $(\ep, \delta)$-domain. Notice that in the notation of~\cite{nh}, it holds that $0<{\rm rad}(\Om)<\diam \Om$, as $\Om$ is connected and bounded, cf. Definition 4.2 in~\cite{nh} and also~\cite{nh2}. 
 
Let us consider a cone $\Gamma_\alpha(x_0)$ at any $x_0\in \partial \Om\setminus E$, where $E$ is the set in Lemma~\ref{lem-dens}. Hence, for any $x\in \Gamma_\alpha(x_0)$ we have that
\[
 d(x,x_0)\leq (1+\alpha)d(x,\partial \Om).
\]
Therefore, it holds that
\[
 B\Big(x, \frac12 d(x,\partial \Om)\Big)\subset B\Big(x_0, \big(1+\alpha+\frac12\big) d(x,\partial \Om)\Big).
\]
Recall that the Kor\'anyi--Reimann distance and the subriemannian distance are equivalent in $\Hn$ with the constant depending on $n$, see Section 2.1, and thus we have that
\[
 B_{KR}\Big(x, \frac{c}{2} d(x,\partial \Om)\Big)\subset  B\Big(x, \frac12 d(x,\partial \Om)\Big)\subset B\Big(x_0, (1+\alpha+\frac12) d(x,\partial \Om)\Big) \subset B_{KR}\Big(x_0, \frac{1}{c}(1+\alpha+\frac12) d(x,\partial \Om)\Big).
\]
We apply Lemma~\ref{lem-sub-mvp} with $a=u(x_0)$ to get that 
\begin{align*}
 |u(x)-u(x_0)|&\leq C(p,n)\left(\vint_{B_{KR}(x,\frac12 d(x,\partial \Om))} |u(y)-u(x_0)|^p \ud y\right)^{\frac{1}{p}}\\
 &\leq C(p,n, c)\left(\vint_{B_{KR}(x_0, \frac{1}{c}(1+\alpha+\frac12) d(x,\partial \Om))} |u(y)-u(x_0)|^p \ud y\right)^{\frac{1}{p}},
\end{align*}
where in the last step we also use a consequence of the doubling property (the relative lower volume decay (9.1.14) in~\cite{hkst}):
\[
\frac{|B_{KR}(x_0, \frac{1}{c}(1+\alpha+\frac12) d(x,\partial \Om))|}{|B_{KR}(x,\frac12 d(x,\partial \Om))|}\lesssim_{n} \left(\frac{c}{3+2\alpha}\right)^{2n+2}. 
\]
%This can be further estimated by a constant depending on $n$ and $c$ only, by assuming that $x$ is close enough the boundary (say $d(x, \partial \Om)<1$). 
The assertion of the theorem now follows from Lemma~\ref{lem-dens} by letting $d(x,\partial \Om)\to 0$.
\end{proof}

\begin{appendix}
\section{The lower bound for a Green function}
The following result, applied in Example~\ref{ex-ball}, is of independent interest and to best of our knowledge did not yet appear in the literature on Green functions in the subriemmanian setting.
\begin{prop}[cf. (1.9) in Theorem (1.1), \cite{gw}] Let $\Om\subset \Hn$ be a domain and $G:\Om\times\Om\to \R$ be a Green function of $\Om$ associated with the Laplacian $\Delta_H$. Then, it holds 
\[
 G(z,y)\geq c(n, \Delta_H) d(z,y)^{2-Q},
\]
for all $z,y\in \Om$ satisfying $d(z,y)\leq \frac12 d(y,\partial \Om)$.
\end{prop}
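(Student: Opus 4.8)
The plan is to combine the explicit fundamental solution of $\Delta_H$ with the standard decomposition of the Green function, and then to spread the resulting estimate by a Harnack chain. Write $\delta:=d(y,\partial\Om)$; if $\Om=\Hn$ then $G=\Gamma$ and the claim follows from the next paragraph, so assume $\delta<\infty$. First I would recall (Folland's formula; see Chapter~9 in~\cite{blu}) that the fundamental solution of $\Delta_H$ has the form $\Gamma(x,y)=\beta_Q\,\mathcal N(y^{-1}\cdot x)^{2-Q}$ for a homogeneous norm $\mathcal N$ and a constant $\beta_Q>0$; since $\mathcal N$ is equivalent to the Kor\'anyi gauge with a constant depending only on $n$, there are $0<C_2\le C_1$, depending on $n$ and $\Delta_H$ only, with
\[
C_2\,d(x,y)^{2-Q}\le \Gamma(x,y)\le C_1\,d(x,y)^{2-Q},\qquad x\ne y.
\]
Moreover $G(\cdot,y)=\Gamma(\cdot,y)-h_y$, where $h_y$ is harmonic in $\Om$ with boundary data $\Gamma(\cdot,y)|_{\partial\Om}$; this datum is continuous and bounded (as $y\in\Om$), so the weak maximum principle for harmonic functions (Chapter~8 in~\cite{blu}) gives $h_y\le \sup_{\partial\Om}\Gamma(\cdot,y)\le C_1\delta^{2-Q}$, because $d(w,y)\ge\delta$ for every $w\in\partial\Om$.

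Next I would prove the bound close to the pole. For $z$ with $d(z,y)\le\delta/a$ and any $a\ge2$ the two displays above yield
\[
G(z,y)=\Gamma(z,y)-h_y(z)\ge C_2\,d(z,y)^{2-Q}-C_1\,\delta^{2-Q}\ge\bigl(C_2-C_1a^{2-Q}\bigr)\,d(z,y)^{2-Q},
\]
where the last step uses $\delta\ge a\,d(z,y)$, hence $\delta^{2-Q}\le a^{2-Q}d(z,y)^{2-Q}$. Choosing $a_0:=\max\{2,\,2(C_1/C_2)^{1/(Q-2)}\}$ makes $C_1a_0^{2-Q}\le C_2/2$, so
\[
G(z,y)\ge \tfrac12 C_2\,d(z,y)^{2-Q}\qquad\text{for all }z\text{ with }d(z,y)\le \delta/a_0 .
\]

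It then remains to treat the range $\delta/a_0<d(z,y)\le\delta/2$, and for this I would run a Harnack chain. The function $G(\cdot,y)$ is positive and harmonic on the shell $S:=\{w:\delta/(2a_0)<d(w,y)<3\delta/4\}$, whose closure lies in $B(y,\delta)\setminus\{y\}\subset\Om\setminus\{y\}$, and $S$ is connected since $Q=2n+2\ge4$. Because left translations and Heisenberg dilations are symmetries of $d$ and of $\Delta_H$, after translating $y$ to the origin and rescaling by $\delta^{-1}$ the shell $S$ becomes a fixed shell depending only on $a_0$; the Harnack inequality on metric balls \cite[Corollary~5.7.3]{blu} then provides a constant $C_H=C_H(n,\Delta_H)\ge1$ with $G(w_1,y)\le C_H\,G(w_2,y)$ for any $w_1,w_2$ in the compact set $\{w:\delta/a_0\le d(w,y)\le\delta/2\}\subset S$. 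Taking $w_2=z_\star$ with $d(z_\star,y)=\delta/a_0$, applying the previous display at $z_\star$, and using $d(z,y)\ge\delta/a_0$ once more, one gets $G(z,y)\ge (2C_H)^{-1}C_2\,d(z,y)^{2-Q}$ on this range as well. Combining the two ranges gives the proposition with $c(n,\Delta_H)=C_2/(2C_H)$.

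The routine parts are the first two steps — they use only the explicit $\Gamma$, the decomposition $G=\Gamma-h_y$, and the maximum principle, all standard, cf.~\cite{blu,gw}. The point that needs care is the last step: one must check that the Harnack-chain length on $S$ is bounded by a constant independent of $y$ and $\delta$, which is precisely where the dilation structure of $\Hn$ enters; equivalently, if the homogeneous norm $\mathcal N$ in Folland's formula happens to be within a factor $\sqrt2$ of the Kor\'anyi gauge, the comparison in the second step already closes the full range $d(z,y)\le\delta/2$ and the Harnack chain can be omitted.
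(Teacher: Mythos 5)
Your proof is correct, but it takes a genuinely different route from the one in the Appendix. The paper follows the Gr\"uter--Widman scheme: it tests the weak identity $\int_{\Om}\langle\nabla_H G(\cdot,y),\nabla_H\phi\rangle=\phi(y)$ with a cutoff supported in $B_d(y,r)$, $r=d(z,y)$, to get $1\lesssim r^{-1}\int_{\rm annulus}|\nabla_H G|$, then runs a Caccioppoli estimate with the test function $G\psi^2$ to control $\int|\nabla_H G|^2$ by $\sup G^2$ on a slightly larger annulus, and closes with H\"older and the Harnack inequality for $G(\cdot,y)$; no formula for the fundamental solution is used. You instead exploit the explicit structure of $\Delta_H$ on $\Hn$: Folland's fundamental solution $\Gamma\approx d^{2-Q}$ (in $\Hn$ it is in fact exactly a multiple of the Kor\'anyi gauge to the power $2-Q$), the decomposition $G=\Gamma-h_y$ with the maximum principle bound $h_y\le C_1\,d(y,\partial\Om)^{2-Q}$, and then a scaling-plus-Harnack step to pass from the range $d(z,y)\le\delta/a_0$ to the full range $d(z,y)\le\delta/2$; the dilation/translation invariance you invoke is exactly what makes the Harnack constant on the rescaled shell independent of $y$ and $\delta$, and your constant-tracking ($a_0$, $C_H$) is sound. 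What each approach buys: yours is more elementary and gives rather explicit constants, but it is tied to the existence of a homogeneous fundamental solution and, in the maximum-principle step, is cleanest for bounded $\Om$ (for unbounded domains one should add a word about the PWB solution $h_y$ being dominated by $\sup_{\partial\Om}\Gamma(\cdot,y)$, since $\Gamma(\cdot,y)$ vanishes at infinity); the paper's energy argument is purely local, never touches $\partial\Om$, and transfers verbatim to more general subelliptic or variable-coefficient operators where no explicit $\Gamma$ is available. One cosmetic point: the connectivity of your shell $S$ is a statement about the topological dimension $2n+1\ge 3$ of $\Hn$ (gauge annuli are homeomorphic to Euclidean ones), not about the homogeneous dimension $Q\ge 4$.
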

\begin{proof}
 We follow the steps of the corresponding proof in~\cite{gw}. Recall that $G\geq 0$, $G(x,\cdot)=0$ for $x\in\partial \Om$, $G(x,y)=G(y,x)$ and, moreover, for any fixed $y\in \Om$, the following representation formula holds: $G(\cdot, y)=\Gamma(\cdot, y)-h_y(\cdot)$, where $\Gamma$ is the fundamental solution of $\Delta_H$ and $h_y$ is the harmonic function in $\Om$ having as boundary values the fundamental solution $\Gamma$ with the pole at $y\in \Om$ (in the PWB sense). Thus, $\Delta_H^x G(x,y)=-\delta_y(x)$ which in the weak sense reads:
 \begin{equation}\label{app-weak}
  \int_{\Om} \langle \nabla_H G(x,y), \nabla_H\phi(x) \rangle\, \ud x=\phi(y), \quad \hbox{for any }\phi \in C_{0}^{\infty}(\Om). 
 \end{equation}
  Let $z,y\in \Om$ satisfy the assumption $d(z,y)\leq \frac12 d(y,\partial \Om)$ and set $r:=d(z,y)$. Define the test function $\phi\in C_{0}^{\infty}(\Om)$ such that:
 $$
 0\leq \phi \leq 1 \hbox{ in }\Om,\quad \phi\equiv 1|_{B_d(y, \frac{r}{2})},\quad \phi\equiv 0|_{\Om\setminus B_d(y, r)} \,\hbox{ and also } |\nabla_H \phi|\leq \frac{C}{r}.
 $$
 
Then, by applying~\eqref{app-weak} with the above $\phi$ , we obtain that
\begin{equation}\label{app-eq2}
 1\leq \frac{C}{r} \int_{B_d(y, r) \setminus B_d(y, \frac{r}{2})} |\nabla_H G(x,y)|\,\ud x,
\end{equation}
for all $x\in B_d(y, \frac{r}{2})$. Similarly, we consider another test function $\eta(x):=G(x,y) \psi^2(x)$, where $\psi \in C_{0}^{\infty}(\Om)$ is such that 
$$
0\leq \psi \leq 1 \hbox{ in }\Om, \quad \psi\equiv 1|_{B_d(y, r)\setminus B_d(y, \frac{r}{2})},\quad \psi\equiv 0\, \hbox{ outside set } B_d(y, \frac{3r}{2}) \setminus B_d(y, \frac{r}{4}) \hbox{ and also }|\nabla_H \psi|\leq \frac{C}{r}.
$$
Since $\nabla_H \eta(x)=\nabla_H G(x,y)\psi^2(x)+2G(x,y)\psi(x) \nabla_H\psi(x)$, upon substituting this expression into~\eqref{app-weak}, we obtain the following equation:
\[
 \int_{\Om} |\nabla_H G(x,y)|^2\psi^2(x)\,\ud x+2\int_{\Om} \langle \nabla_H G(x,y), \nabla_H\psi(x) \rangle G(x,y)\psi(x)\, \ud x=0,
\]
as the support of $\psi$ does not contain $y$. Now, the standard inequality $2ab\leq \frac14 a^2+4b^2$ for $a,b\in \R$ together with the H\"older inequality imply that
\[
(1-\frac14)  \int_{B_d(y, r)\setminus B_d(y, \frac{r}{2})} |\nabla_H G(x,y)|^2\,\ud x \lesssim \frac{C^2}{r^2} \sup_{ B_d(y, \frac{3r}{2}) \setminus B_d(y, \frac{r}{4})} G^2(x,y)\, r^Q .
\]
We combine this estimate with~\eqref{app-eq2} to arrive at the following inequality
\begin{align*}
1 &\lesssim \frac{C}{r} \left(\int_{B_d(y, r) \setminus B_d(y, \frac{r}{2})} |\nabla_H G(x,y)|^2\,\ud x\right)^{\frac12} r^{\frac{Q}{2}} \\
 & \lesssim C r^{\frac{Q}{2}-1}\left(C^2 r^{Q-2}\,\sup_{B_d(y, \frac{3r}{2}) \setminus B_d(y, \frac{r}{4})} G^2(x,y)\right)^{\frac12} \\
& \lesssim_{n, \Delta_H} C^2r^{Q-2} G(z,y), 
\end{align*}
where in the last step we also appeal to the Harnack inequality for harmonic function $G(\cdot,y)$. Thus, the proof is completed upon recalling that $r=d(z,y)$.
\end{proof}
\end{appendix}

%\textit{Tomasz Adamowicz:} Institute of Mathematics, Polish Academy of Sciences, ul. \'Sniadeckich 8, Warsaw 00-656, Poland. E-mail address: tadamowi@impan.pl
%
%\textit{Marcin Grysz\'owka} 

\end{document}